\newcommand{\nc}{\newcommand}
\newcommand{\rc}{\renewcommand}
\nc{\cmt}[1]{\noindent\textRed{{\fbox{W}} #1}\textBlack}
\nc{\cmtk}[1]{\noindent\textBlue{{\fbox{K}} #1}\textBlack}
\nc{\cmtm}[1]{\noindent\textGreen{{\fbox{M}} #1}\textBlack}
\newtheorem{thm}[equation]{Theorem}
\newtheorem{prop}[equation]{Proposition}
\newtheorem{cor}[equation]{Corollary}
\newtheorem{conj}[equation]{Conjecture}
\numberwithin{equation}{section}
\DeclareMathOperator{\Ad}{Ad}
\nc{\on}{\operatorname}
\nc{\al}{{\alpha }}
\nc{\be}{{\beta }}
\nc{\ga}{{\gamma }}
\nc{\de}{{\delta }}
\nc{\del}{{\partial }}
\nc{\ep}{{\varepsilon }}
\nc{\vap}{{\epsilon }}
\nc{\ze}{{\zeta }}
\nc{\et}{{\eta }}
\rc{\th}{{\theta }}
\nc{\vth}{{\vartheta }}
\nc{\io}{{\iota }}
\nc{\ka}{{\kappa }}
\nc{\la}{{\lambda }}
\nc{\vrho}{{\varrho}}
\nc{\si}{{\sigma }}
\nc{\ups}{{\upsilon }}
\nc{\vphi}{{\varphi }}
\nc{\om}{{\omega }}
\nc{\Ga}{{\Gamma }}
\nc{\De}{{\Delta }}
\nc{\nab}{{\nabla}}
\nc{\Th}{{\Theta }}
\nc{\La}{{\Lambda }}
\nc{\Si}{{\Sigma }}
\nc{\Ups}{{\Upsilon }}
\nc{\Om}{{\Omega }}
\renewcommand\a{\alpha}
\renewcommand\l{\lambda}
\newcommand\G{\Gamma}
\renewcommand\L{\Lambda}
\nc{\fA}{{\mathfrak A}}
\nc{\fB}{{\mathfrak B}}
\nc{\fC}{{\mathfrak C}}
\nc{\fD}{{\mathfrak D}}
\nc{\fE}{{\mathfrak E}}
\nc{\fF}{{\mathfrak F}}
\nc{\fG}{{\mathfrak G}}
\nc{\fH}{{\mathfrak H}}
\nc{\fI}{{\mathfrak I}}
\nc{\fJ}{{\mathfrak J}}
\nc{\fK}{{\mathfrak K}}
\nc{\fL}{{\mathfrak L}}
\nc{\fM}{{\mathfrak M}}
\nc{\fN}{{\mathfrak N}}
\nc{\fO}{{\mathfrak O}}
\nc{\fP}{{\mathfrak P}}
\nc{\fQ}{{\mathfrak Q}}
\nc{\fR}{{\mathfrak R}}
\nc{\fS}{{\mathfrak S}}
\nc{\fT}{{\mathfrak T}}
\nc{\fU}{{\mathfrak U}}
\nc{\fV}{{\mathfrak V}}
\nc{\fW}{{\mathfrak W}}
\nc{\fZ}{{\mathfrak Z}}
\nc{\fX}{{\mathfrak X}}
\nc{\fY}{{\mathfrak Y}}
\nc{\fa}{{\mathfrak a}}
\nc{\fb}{{\mathfrak b}}
\nc{\fc}{{\mathfrak c}}
\nc{\fd}{{\mathfrak d}}
\nc{\fe}{{\mathfrak e}}
\nc{\ff}{{\mathfrak f}}
\nc{\fg}{{\mathfrak g}}
\nc{\fh}{{\mathfrak h}}
\nc{\fiI}{{\mathfrak i}}  
\nc{\ffi}{{\mathfrak i}}  
\nc{\fj}{{\mathfrak j}}
\nc{\fk}{{\mathfrak k}}
\nc{\fl}{{\mathfrak{l}}}
\nc{\fm}{{\mathfrak m}}
\nc{\fn}{{\mathfrak n}}
\nc{\fo}{{\mathfrak o}}
\nc{\fp}{{\mathfrak p}}
\nc{\fq}{{\mathfrak q}}
\nc{\fr}{{\mathfrak r}}
\nc{\fs}{{\mathfrak s}}
\nc{\ft}{{\mathfrak t}}
\nc{\fu}{{\mathfrak u}}
\nc{\fv}{{\mathfrak v}}
\nc{\fw}{{\mathfrak w}}
\nc{\fz}{{\mathfrak z}}
\nc{\fx}{{\mathfrak x}}
\nc{\fy}{{\mathfrak y}}
\nc{\bA}{{\mathbb A}}
\nc{\bB}{{\mathbb B}}
\nc{\bC}{{\mathbb C}}
\nc{\bD}{{\mathbb D}}
\nc{\bE}{{\mathbb E}}
\nc{\bF}{{\mathbb F}}
\nc{\bG}{{\mathbb G}}
\nc{\bH}{{\mathbb H}}
\nc{\bI}{{\mathbb I}}
\nc{\bJ}{{\mathbb J}}
\nc{\bK}{{\mathbb K}}
\nc{\bL}{{\mathbb L}}
\nc{\bM}{{\mathbb M}}
\nc{\bN}{{\mathbb N}}
\nc{\bO}{{\mathbb O}}
\nc{\bP}{{\mathbb P}}
\nc{\bQ}{{\mathbb Q}}
\nc{\bR}{{\mathbb R}}
\nc{\bS}{{\mathbb S}}
\nc{\bT}{{\mathbb T}}
\nc{\bU}{{\mathbb U}}
\nc{\bV}{{\mathbb V}}
\nc{\bW}{{\mathbb W}}
\nc{\bZ}{{\mathbb Z}}
\nc{\bX}{{\mathbb X}}
\nc{\bY}{{\mathbb Y}}
\newcommand{\Z}{{\mathbb{Z}}}
\newcommand{\R}{{\mathbb{R}}}
\newcommand{\C}{{\mathbb{C}}}
\newcommand{\D}{{\mathbb{D}}}
\newcommand{\Q}{{\mathbb{Q}}}
\renewcommand{\O}{{\mathbb{O}}}
\nc{\cA}{{\mathcal A}}
\nc{\cB}{{\mathcal B}}
\nc{\cC}{{\mathcal C}}
\nc{\cD}{{\mathcal D}}
\nc{\cE}{{\mathcal E}}
\nc{\cF}{{\mathcal F}}
\nc{\cH}{{\mathcal H}}
\nc{\cI}{{\mathcal I}}
\nc{\cJ}{{\mathcal J}}
\nc{\cK}{{\mathcal K}}
\nc{\cL}{{\mathcal L}}
\nc{\cM}{{\mathcal M}}
\nc{\cN}{{\mathcal N}}
\nc{\cO}{{\mathcal O}}
\nc{\cP}{{\mathcal P}}
\nc{\cQ}{{\mathcal Q}}
\nc{\cR}{{\mathcal R}}
\nc{\cS}{{\mathcal S}}
\nc{\cT}{{\mathcal T}}
\nc{\cU}{{\mathcal U}}
\nc{\cV}{{\mathcal V}}
\nc{\cW}{{\mathcal W}}
\nc{\cZ}{{\mathcal Z}}
\nc{\cX}{{\mathcal X}}
\nc{\cY}{{\mathcal Y}}
\newcommand\cExt{\mathcal E\! \it{xt}}
\renewcommand\O{{\mathcal O}}
\newcommand\codim{\operatorname{codim}}
\newcommand{\rk}{\operatorname{rk}}
\newcommand{\Hom}{\operatorname{Hom}}
\newcommand{\Ker}{\operatorname{Ker}}
\newcommand{\gr}{\operatorname{gr}}
\newcommand{\Mod}{\operatorname{Mod}}
\newcommand{\HC}{\operatorname{HC}}
\newcommand{\HM}{\operatorname{HM}}
\newcommand{\CHM}{\operatorname{CHM}}
\newcommand{\MHM}{\operatorname{MHM}}
\newcommand{\CMHM}{\operatorname{CMHM}}
\renewcommand{\Re}{\operatorname{Re}}
\renewcommand{\Im}{\operatorname{Im}}
\renewcommand\({\left(}
\renewcommand\){\right)}
\newcommand{\gobble}[1]{}
  \newcommand{\rangeref}[2]{%
    \ref{#1}--\afterassignment\gobble\fam 0\ref{#2}%
  }
\begin{document}

\title{Hodge theory and unitary representations of reductive Lie groups}

\author{Wilfried Schmid}
\address{Department of Mathematics, Harvard University, Cambridge, MA 02138}
\email{schmid@math.harvard.edu}
\thanks{The first author was supported in part by DARPA grant HR0011-04-1-0031 and NSF grants DMS-0500922, DMS-1001405.}

\author{Kari Vilonen}\address{Department of Mathematics, Northwestern University, Evanston, IL 60208, also Department of Mathematics, Helsinki University, Helsinki, Finland}
\email{vilonen@northwestern.edu, vilonen@math.helsinki.fi}
\thanks{The second author was supported in part by DARPA grant HR0011-04-1-003, by NSF, and by AFOSR grant FA9550-08-1-0315.}

\subjclass{Primary 22E46, 22D10, 58A14; Secondary 32C38}
\date{March 9, 2011.}

\keywords{Representation theory, Hodge theory}

\begin{abstract}
We present an application of Hodge theory towards the study of irreducible unitary representations of reductive Lie
groups. We describe a conjecture about such representations and discuss some progress towards its proof.
\end{abstract}

\maketitle

\section{Introduction}\label{sec:intro}

In this paper we state a conjecture on the unitary dual of reductive Lie groups and formulate certain foundational
results towards it. We are currently working towards a proof. The conjecture does not give an explicit description
of the unitary dual. Rather, it provides a strong functorial framework for studying unitary representations. Our
main tool is M. Saito's theory of mixed Hodge modules \cites{saito:rims1990}, applied to the Beilinson-Bernstein
realization of Harish Chandra modules \cite{bb:1993}

Our paper represents an elaboration of the first named author's lecture at the Sanya Inauguration conference. Like
that lecture, it aims to provide an expository account of the present state of our work.

We consider a linear reductive Lie group $G_\R$ with maximal compact subgroup $K_\R$. The complexification $G$ of
$G_\R$ contains a unique compact real form $U_\R$ such that $K_\R = G_\R \cap U_\R$. We denote the Lie algebras of
$G_\R$, $U_\R$, $K_\R$ by the subscripted Gothic letters $\fg_\R$, $\fu_\R$, and $\fk_\R$\,;\,\ the first two of
these lie as real forms in $\fg$, the Lie algebra of the complex group $G$, and similarly $\fk_\R$ is a real form
of $\fk$, the Lie algebra of the complexification $K$ of the group $K_\R$. To each irreducible unitary
representation $\pi$ of $G_\R$, one associates its Harish Chandra module: a finitely generated module $V$ over the
universal enveloping algebra $U(\fg)$, equipped with an algebraic action of $K$ which is both compatible with the
$U(\fg)$ module structure and admissible, in the sense that $\dim\Hom_K(W,V)<\infty$ for each irreducible
$K$-module $W$. The irreducibility of $\pi$ implies the irreducibility of its Harish Chandra module $V$, and the
unitary nature of $\pi$ is reflected by a positive definite hermitian form $(\ ,\ )$ on $V$ that is
$\fg_\R$-invariant, i.e.,
\begin{equation}
\label{infinvariance}
(\zeta u,v)\, + \, (u,\zeta v)\ = \ 0\ \ \ \text{for all}\ \ u, v\in V,\,\ \zeta \in \fg_\R\,.
\end{equation}
Conversely, the completion of any irreducible Harish Chandra module with a positive definite $\fg_\R$-invariant
hermitian form is the representation space of an irreducible unitary representation $\pi$ of $G_\R$ \cite{HC1}. It
should be noted that the $\fg_\R$-invariant hermitian form is unique up to scaling, if it exists at all.

The correspondence between irreducible unitary representations and irreducible Harish Chandra modules with
$\fg_\R$-invariant hermitian form makes it possible to break up the problem of describing the unitary dual
$\widehat G_\R$ into three sub-problems:
\begin{equation}
\begin{aligned}
\label{subproblems}
{\rm a)}\, \ &\text{classify the irreducible Harish Chandra modules;}
\\
{\rm b)}\, \ &\text{single out those carrying a non-zero $\fg_\R$-invariant hermitian form;}
\\
{\rm c)}\, \ &\text{determine if the $\fg_\R$-invariant hermitian form has a definite sign.}
\end{aligned}
\end{equation}
Problem a) has been solved in several different, yet fundamentally equivalent ways \cites{bb:1981, HMSWI, HMSWII,
KZ1982, langlands1989, vogan1981}, and b) can be answered by elementary considerations. That leaves c) as the
remaining -- and difficult -- open problem. This problem has been solved for certain groups, e.g.
\cites{vogan1986,tadic2009}, and certain classes of representations, e.g. \cite{barbasch2009}, but these solutions
do not at all suggest a general answer.

In any irreducible Harish Chandra module $V$, the center $Z(\fg)$ of $U(\fg)$ acts by a character, the so-called
infinitesimal character of $V$. When addressing the problem (\ref{subproblems}c), we may and shall assume that
\begin{equation}
\label{realinfcharacter1}
\text{$V$ has a real infinitesimal character;}
\end{equation}
for the definition of this notion see (\ref{realinfchar1}) below. Indeed, any irreducible unitary Harish Chandra
module is irreducibly and unitarily induced from one that satisfies this hypothesis \cite{knapp1986}*{theorem
16.10}. Vogan and his coworkers have pointed out that the hypothesis (\ref{realinfcharacter1}) on an irreducible
Harish Chandra module $V$ implies the existence of a non-zero $\fu_\R$-invariant hermitian form $(\ ,\ )_{\fu_R}$;
as in the $\fg_\R$-invariant case, the hermitian form is then unique up to scaling. When also a non-zero
$\fg_\R$-invariant hermitian form $(\ , \ )$ exists, then the two are very directly related \cite{vogan2009}. If,
for example, $G_\R$ and $K_\R$ have the same rank, there exist a root of unity $\,c\,$ and a character $\,\psi\,$
from the semigroup\begin{footnote}{with respect to the semigroup structure obtained by identifying each class in
$\widehat K_\R$ with its highest weight.}\end{footnote} $\widehat K_\R$ to the group of roots of unity such that,
after a suitable rescaling of one of the hermitian forms,
\begin{equation}
\label{grvsur}
(v,v) = c\,\psi(j)(v,v)_{\fu_\R}\ \ \text{for $j\in \widehat K_\R$ and any $j$-isotopic vector $v\in V$.}
\end{equation}
Both $\,c\,$ and $\,\psi\,$ can be easily and explicitly described. In this identity $\,c\,\psi(j)= \pm 1\,$,\,\ of
course, unless $V$ contains no $j$-isotypic vectors $v\neq 0$. When $G_\R$ and $K_\R$ have unequal rank, the
relation between the two hermitian forms is only slightly less direct.

The close connection between the two hermitian forms means that the problem (\ref{subproblems}c) can be answered if
one has sufficiently precise information about the $\fu_\R$-invariant hermitian form.

Our conjecture is best stated in the context of $\mathcal D_\l$-modules, and we shall do so later in this note. But
for applications, the case of Harish Chandra modules is most important. For this reason, in the introduction we
shall concentrate on the Harish Chandra case.

Let $V$ be a Harish Chandra module with real infinitesimal character. We do not assume that $V$ is irreducible, but
$V$ should be ``functorially constructible". In particular, this includes -- but is not limited to -- so-called
standard Harish Chandra modules \cite{HMSWI}, their duals, and extensions of the former by the latter, as
considered by Beilinson-Bernstein in their proof of the Jantzen conjectures \cite{bb:1993}. We shall show that in
these cases $V$ carries two canonical, functorial filtrations: an increasing filtration by finite dimensional
$K$-invariant subspaces,
\begin{equation}
\label{hodgefilt1}
0 \, \subset \, F_a V \, \subset \, F_{a+1} V \, \subset \, \dots \, \subset \, F_pV \, \subset \, \dots \, \subset \, {\cup}_{a \leq p < \infty} \ F_pV \ = \ V\,,
\end{equation}
which we call the {\it Hodge filtration\/}, and a finite filtration by Harish Chandra submodules,
\begin{equation}
\label{weightfilt}
0 \, \subset \, W_b V \, \subset \, W_{b+1}V \, \subset \, \dots \, \subset \, W_kV \, \subset \, \dots \, \subset \, W_c V \ = \ V\,,
\end{equation}
the {\it weight filtration\/}. If $V$ is functorially constructible, then so are the $W_kV$ and the quotients
$W_kV/W_{k-1}V$. The Hodge filtration is a {\it good filtration\/}, in the sense that
\begin{equation}
\label{hodgefilt2}
\fg(F_pV)\subset F_{p+1}V  \ \ \text{for all}\,\ p\,,\,\ \ \text{and} \ \ F_pV +\, \fg(F_pV) = F_{p+1}V\ \ \text{for}\,\ p \gg 0 \,.
\end{equation}
Functorality means not only that the filtrations are preserved\begin{footnote}{Up to a shift of indices that can be
described explicitly.}\end{footnote} by morphisms of Harish Chandra modules, i.e., by simultaneously $U(\fg)\,$-
and $K$-invariant linear maps, provided they are functorially constructible in the same sense as the Harish Chandra
modules in question. The filtrations are induced by geometrically defined Hodge and weight filtrations of the
$\mathcal D_\l$-module realization of $V$, and therefore inherit all the functorial properties of these geometric
filtrations. In particular, like all filtrations in Hodge theory, both $F_{\cdot}V$ and $W_{\cdot}V$ are strictly
preserved by morphisms:
\begin{equation}
\label{strictness}
T(F_pV_1) = (TV_1)\cap F_pV_2 \ \ \ \text{and} \ \ \ T(W_kV_1) = (T\,V_1)\cap W_kV_2
\end{equation}
whenever $T: V_1 \to V_2$ is a functorially constructible morphism of Harish Chandra modules. Moreover,
\begin{equation}
\label{semisimple}
W_kV/W_{k-1}V\ \ \text{is completely reducible for each $k$.}
\end{equation}
In the case of standard modules, this latter assertion can be strengthened. Indeed, the weight filtration then
coincides with the Jantzen filtration \cite{bb:1993}, and hence with the socle filtration of $V$.

Still assuming that $V$ has a real infinitesimal character, we now also suppose that $V$ is irreducible. The
$\fu_\R$-invariant hermitian form $(\ ,\ )_{\fu_\R}$ on $V$ is determined only up to scaling. Vogan and his
coworkers show that it has the same sign on all the lowest $K$-types, and use this fact to fix a choice of sign. We
shall give a geometric construction of $(\ ,\ )_{\fu_\R}$, which also results in a preferred choice of sign. Our
choice, it turns out, agrees with that of Vogan et al. After these preparations, we can state:

\begin{conj}\label{conj1}
\ \ $(-1)^{p-a}(v,v)_{\fu_\R} > 0$\ \ \ {\rm if}\ \ \ $v\in F_pV \cap (F_{p-1}V)^\perp\,,\ \ v\neq 0\,$.
\end{conj}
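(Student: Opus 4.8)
The plan is to translate the statement into the geometry of the flag variety and then to deduce it from the positivity built into M. Saito's theory of polarized Hodge modules. First I would invoke Beilinson--Bernstein localization \cite{bb:1993}: with $\la$ the regular dominant parameter attached to the (real) infinitesimal character of $V$, write $V=\Gamma(X,\cM)$, where $X$ is the flag variety of $\fg$ and $\cM$ is the simple $K$-equivariant $\mathcal D_\la$-module realizing $V$; since $V$ is irreducible, $\cM$ is the minimal ($j_{!*}$-)extension of an irreducible $K$-equivariant local system $\cV$ on a single $K$-orbit $Q$. Because $V$ has real infinitesimal character, $\cV$ underlies a polarizable variation of Hodge structure, so $\cM$ is a polarizable pure Hodge module of some weight $w$ (in the twisted variant of Saito's theory appropriate to a non-integral $\la$), and its Hodge filtration $F_\bullet\cM$ induces, via $F_pV=\Gamma(X,F_p\cM)$, precisely the Hodge filtration (\ref{hodgefilt1}) --- up to the index shift already flagged in the paper. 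As explained just before the statement, $(\ ,\ )_{\fu_\R}$, with its preferred sign, is the hermitian form on $V$ obtained from a polarization $S$ of $\cM$ together with the compact real structure on $\mathcal D_\la$ (equivalently on $X$) determined by $\fu_\R$. Thus Conjecture \ref{conj1} becomes a statement about the triple $(\cM,F_\bullet\cM,S)$ and the functor $\Gamma(X,-)$: the form and the Hodge filtration on $V$ are in ``Hodge--Riemann position''.

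The conceptual content of that position is that $F_\bullet V$ should be one half of a (limiting, infinite-dimensional) Hodge structure whose polarization is $(\ ,\ )_{\fu_\R}$, so that Conjecture \ref{conj1} is exactly the Hodge--Riemann bilinear relation, the alternating sign $(-1)^{p-a}$ coming from the Weil operator on the graded pieces and the shift by $a$ recording the bottom layer $F_aV$, which contains the lowest $K$-types and on which positivity is the normalization matched with Vogan et al. Concretely one must show that for every $p$ the restriction of $(\ ,\ )_{\fu_\R}$ to $F_pV$ is nondegenerate with signature dictated by the parities of $a,\dots,p$; then each $F_pV\cap(F_{p-1}V)^\perp$ models $\gr^F_pV$ and is definite of sign $(-1)^{p-a}$, which is the assertion. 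The natural route is induction on the strata of $\overline Q$. On the open dense stratum $Q$ the data $(\cM,F_\bullet\cM,S)$ restrict to $\cV$ with its Hodge filtration and polarization, where positivity is the classical Hodge--Riemann relation applied fiberwise and then made global through integration over $X$; at each boundary stratum the behaviour of $F_\bullet$ and $S$ is governed by the limiting mixed Hodge structure, the relative monodromy weight filtration, and the compatibility of $S$ with the nearby and vanishing cycle functors that is part of Saito's inductive definition of a polarized Hodge module.

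I expect the main obstacle to be precisely this last passage --- transporting Saito's positivity through $\Gamma(X,-)$. Saito's formalism produces polarized Hodge structures, hard Lefschetz, and Hodge--Riemann relations on the de Rham cohomology $\bH^i(X,\cM)$, but $V=\Gamma(X,\cM)$ is the space of global sections of a twisted $\mathcal D$-module: it is infinite dimensional and carries no Hodge structure, so the required positivity is not a formal corollary of the existing theory, and even the inductive scheme above is delicate because intermediate extension does not commute with restriction to open subsets. Overcoming this seems to demand a genuinely new input: either an analytic one --- realizing $(v,v)_{\fu_\R}$ as an honest $L^2$ norm of harmonic representatives with the Hodge filtration level reading off the bidegree, in the spirit of $L^2$-Hodge theory with degenerating coefficients --- or an extension of Saito's machinery to a ``Hodge-theoretic Beilinson--Bernstein localization'' that tracks polarizations and the $\fu_\R$-real structure through the global sections functor. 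Alongside this lie two subsidiary tasks that I expect to be technical rather than conceptual: the precise bookkeeping relating the geometric weight $w$, the shift $a$, and Vogan's lowest-$K$-type normalization; and checking that the polarization is compatible with the functorially constructible operations underlying (\ref{strictness})--(\ref{semisimple}), so that the result propagates from the irreducible case to standard modules and their extensions as needed for the applications.
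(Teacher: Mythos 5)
The statement you are asked to prove is Conjecture~\ref{conj1}, and the paper does not prove it; it is the central open conjecture of the article (``We are currently working towards a proof''), and no argument for it appears anywhere in the text. So there is no ``paper's proof'' to match your attempt against, and your proposal should be judged as a strategy sketch, which is also how you yourself frame it.

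Your framework aligns closely with what the paper sets up: Beilinson--Bernstein localization of $V$ to an irreducible $K$-equivariant $\cD_\lambda$-module $\cI(Q,\lambda,\cS)$ on a single $K$-orbit $Q$, the twisted variant of Saito's theory, a polarization in Sabbah--Kashiwara distributional form (\ref{spolarization2})--(\ref{spolarization3}), the $\fu_\R$-invariant form obtained by integrating the polarization distribution against the $U_\R$-invariant measure (Proposition~\ref{prop:invariance}), and the Hodge filtration on $V$ descending from $F_\bullet\cM$ via the exactness of $\Gamma$ (Theorem~\ref{thm:vanishing}). You also correctly isolate the genuine difficulty: $V=\Gamma(X,\cM)$ is infinite-dimensional, carries no Hodge structure, and Saito's positivity statements live on de~Rham cohomology, so the Hodge--Riemann-type positivity asserted in (\ref{conj1}) is not a formal corollary of the existing theory. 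That diagnosis is exactly right, and it is precisely the gap the authors leave open.

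A few inaccuracies worth flagging. First, in the Harish Chandra setting $\cS$ has rank one and the paper equips $\O_Q(\cL_{\lambda-\rho}\!\mid_Q\otimes_\C\cS)$ with the \emph{trivial} Hodge and weight filtrations (\ref{stdchm1}) rather than a general polarizable VHS; the paper explicitly notes that a nontrivial VHS may exist but only the trivial one is canonical. Second, the identity $F_pV=\Gamma(X,F_p\cM)$ rests on Theorem~\ref{thm:vanishing}, which in this paper is itself only announced, with the proof deferred to a future article; using it as an ingredient is legitimate but should be stated as a dependency. Third, the shift $a$ in the conjecture is identified in the paper as $c=\codim_XQ$ (see the remark following Conjecture~\ref{conj2}), a concrete fact your ``bookkeeping'' item would have to recover. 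None of these change the bottom line: your proposal is a faithful reading of the paper's framework and a candid acknowledgment that the key positivity step is missing, but it is not a proof, and neither the paper nor your proposal contains one.
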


Here $\,a\,$ denotes the lowest index in the Hodge filtration, as in (\ref{hodgefilt1}). The Hodge filtration is
typically very difficult to compute. On the other hand, it has excellent functorial properties. Our conjecture, if
proved, would help make the unitary dual accessible to geometric and functorial methods.

Vogan and his coworkers are aiming at an algorithm, one that could be implemented on a large scale computer, to
determine whether a given irreducible Harish Chandra module is unitarizable. In effect, they want to compute the
signature character of the hermitian form $(\ ,\ )_{\fu_\R}$, i.e., the sum
\begin{equation}
\label{signaturechar}
{\sum}_{j\in\widehat{K_\R}}\ (p_j - q_j)\,\phi_j\ ,
\end{equation}
with $\,\phi_j$=\,character of $\,j\in\widehat{K_\R}\,$, and $p_j,\,q_j$ denoting the multiplicity with which
$\,j\,$ occurs in $(\ ,\ )_{\fu_\R}$ with positive and negative sign, respectively. This depends on knowing the
change of the signature character of standard modules as the inducing parameter crosses the various reduction hyperplanes; they have a conjectured formula for this change \cite{vogan2009}. Our conjecture would transparently describe the change of
signature across reduction points, and would thereby imply their conjecture. We should emphasize that the Hodge
filtration is potentially a much finer invariant than the signature character. One might hope that our conjecture,
if proved, will lead to a much more efficient unitarizability algorithm. It would have other consequences as well.

At various periods during our collaboration on this project, we were guests of the MFO Oberwolfach, the MPI Bonn,
and the University of Essen. We thank all three institutions for their hospitality. We are also indebted to Dragan
Mili\v{c}i\'c for helpful discussions.\vspace{2pt}

\section{Invariant hermitian forms}\label{sec:hermitian forms}

Let us be more precise about the class of groups we are considering than we were in the
introduction\begin{footnote}{Our hypotheses on $G_\R$ are the most natural from an expository point of view, but
certainly not the most general under which our results are valid. To extend these results, along the lines sketched
in the appendix of \cite{HMSWI}, is mostly a technical exercise.}\end{footnote}. We start with $G$, a connected,
complex, reductive algebraic group, defined over $\R$\,.\, \ The group $G_\R$ whose unitary representa\-tions are
of interest to us can be any group between the group of all real points in $G$ and the identity component of the
group of real points. Then $G_\R$ has a finite number of connected components, and $\Ad g$, for $g\in G_\R$\,,\,\
defines an inner automorphisms of the Lie algebra of $G$; however $\Ad g$ may not be an inner automorphism of the
Lie algebra of $G_\R$.

We fix a maximal compact subgroup $K_\R \subset G_\R$, whose complexification we denote by $K$. The choice of
$K_\R$ is unique up to an inner automorphism, and is therefore not an essential choice. There exists a unique
compact real form $U_\R\subset G$ such that $U_\R \cap G_\R = U_\R \cap K = K_\R$. As always in this paper, we
denote the Lie algebras by the corresponding lower case German letters. The conjugate linear automorphisms
\begin{equation}
\label{cartandecomp1}
\sigma\,,\, \tau \ :\ \fg\, \to\, \fg\,,\ \ \ \ \left. \sigma_{\,} \right|_{\fg_\R} \, = \, 1\,,\ \ \ \ \left. \tau_{\,} \right|_{\fu_\R} \, = \, 1\,,
\end{equation}
commute, and
\begin{equation}
\label{cartandecomp2}
\theta \ = \ \sigma\,\tau\ = \ \tau\,\sigma
\end{equation}
is the Cartan involution \cite{helgason1962}. As usual, $\fp$ and $\fp_\R$ shall denote the $(-1)$-eigen\-spaces of
$\theta$ in $\fg$ and $\fg_\R$. Then
\begin{equation}
\label{cartandecomp3}
\fg \ = \ \fk \oplus \fp\,,\ \ \ \fg_\R \ = \ \fk_\R \oplus \fp_\R\,,\ \ \ \fu_\R \ = \ \fk_\R \oplus i\,\fp_\R\,,\ \ \
\end{equation}
are the Cartan decompositions of $\fg$, $\fg_\R$, and $\fu_\R$\,.

Any two Cartan subalgebras of $\fg$ are conjugate under an inner automorphism, and this automorphism becomes unique
if each of the two Cartans has been ``ordered"~-- i.e., equipped with a choice of positive root system -- and if,
moreover, the automorphism is required to preserve the order. By definition, the universal Cartan $\fh$ is a
representative of the unique isomorphism class of ordered Cartans, under inner automorphisms preserving the order.
The abstract Weyl group $W$ of $\fg$ acts on $\fh$, the $W$-action preserves the root system $\Phi(\fh)\subset
\fh^*$ as well as the weight lattice $\Lambda \subset \fh^*$, and $\Phi(\fh)$ contains a distinguished positive
root system $\Phi^+(\fh)$. Note that the real form
\begin{equation}
\label{realform1}
\fh_\R^* \ =_{\text{def}}\ \R\otimes_\Z \Lambda\ \subset \ \fh^*
\end{equation}
has nothing to do with the real forms $\fg_\R$ and $\fu_\R$ of $\fg$\,;\,\ in fact, when $\fh$ is identified with
the complexification of a Cartan subalgebra of $\fu_\R$, then $\fh_\R^*$ consists of purely imaginary elements.
Once and for all we fix a faithful algebraic representation of $G$. The trace form of that representation induces a
$W$-invariant inner product $(\ ,\ )$ on $\fh_\R^*$.

We write $\HC(\fg,K)$ for the category of Harish Chandra modules of the pair $(\fg,K)$, and $\HC(\fg,K)_\l$ for the
full subcategory of Harish Chandra modules with infinitesimal character $\chi_\l$\,,\,\ $\l\in\L$\,,\,\ in Harish
Chandra's notation. Then $\HC(\fg,K)_\l = \HC(\fg,K)_\mu$ if and only if $\l = w\mu$ for some $w\in W$. It has been
known for a long time that isomorphism classes of irreducible unitary representations of $G_\R$ correspond
bijectively to irreducible Harish Chandra modules with a positive definite $\fg_\R$-invariant hermitian form, again
up to isomorphism \cite{HC1}*{theorem 9}. That reduces the problem of describing the unitary dual $\widehat G_\R$
to the determination of all irreducible Harish Chandra modules with a positive definite hermitian form. We shall
say that
\begin{equation}
\label{realinfchar1}
\begin{gathered}
V \in \HC(\fg,K) \ \ \text{has real infinitesimal character}
\\
\text{if}\ \ V \in \HC(\fg,K)_\l\,,\ \ \text{with}\ \ \l\in\fh_\R^*\,.
\end{gathered}
\end{equation}
In addressing the unitarity problem for $G_\R$, one can restrict attention to irreducible Harish Chandra modules
with real infinitesimal character, without essential loss of generality. In fact, if we identify $\fh$ with the
complexified Lie algebra of a concrete, maximally split Cartan subgroup $H_\R\subset G_\R$, any irreducible
unitarizable Harish Chandra module $V\in \HC(\fg,K)_\l$ arises as the Harish Chandra module of a unitarily,
irreducibly induced representation $\operatorname{Ind}_{P_\R}^{G_\R}\tau$, and the Harish Chandra module of the
inducing representation $\tau$ is unitarizable, with real infinitesimal character; the Langlands component $M_\R$
of $P_\R$ coincides with the centralizer of $\Im \l$, and $\Im \l$ is the inducing parameter\begin{footnote}{Here
$\Im\l$ should be interpreted using the real structure (\ref{realform1}).}\end{footnote} \cite{knapp1986}*{theorem
16.10}.

Because of the reduction of the unitarity problem we just described, we may and shall impose the standing
hypothesis that
\begin{equation}
\label{realinfchar2}
\l\in \fh_\R^* \,,\, \ \text{and $\l$ is dominant,}
\end{equation}
in the sense that $(\alpha,\l) \geq 0$ for all $\alpha\in \Phi^+(\fh)$. As subscript in $\HC(\fg,K)_\l$, the
parameter $\l$ is only determined up to the action of $W$. Every $\l\in\fh_\R^*$ has a unique dominant
$W$-translate, so the dominance condition pins down $\l$ uniquely within its $W$-orbit. Initially the dominance
condition does not matter, but it will play a crucial role in the last section.

\begin{thm}\label{thm:URinvariantform}
{\rm (Vogan et al.\,\cite{vogan2009})}\,\ Suppose that $V\in \HC(\fg,K)_\l$ is irreducible, with $\l$ as in {\rm
(\ref{realinfchar2})}. Then $V$ carries a non-zero $\fu_\R$-invariant hermitian form $(\ ,\ )_{\fu_\R}$. It is
unique up to scaling, and can be rescaled so that it becomes positive definite on the lowest $K$-types in the sense
of Vogan \cite{vogan1979}.
\end{thm}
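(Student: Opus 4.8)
The plan is to reformulate the existence statement via Hermitian duals, produce the form by real parabolic induction from a tempered piece, deduce uniqueness from Schur's lemma, and finally pin down the sign on the lowest $K$-types from the construction. Recall that for the conjugation $\tau$ of $\fg$ with respect to $\fu_\R$ from (\ref{cartandecomp1}), the Hermitian dual $V^{h}$ is the $K$-finite part of the space of conjugate-linear functionals on $V$, with $\zeta\in\fg$ acting by $(\zeta f)(v)=-f(\tau(\zeta)v)$, and that a nonzero $\fu_\R$-invariant hermitian form on $V$ is the same thing as a nonzero $(\fg,K)$-morphism $V\to V^{h}$ fixed by the natural conjugate-linear involution $T\mapsto T^{h}$ of $\Hom_{(\fg,K)}(V,V^{h})$. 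Since $\tau$ is the identity on $\fk_\R$ and acts by $-1$ on $\fh_\R^*$ (the latter being purely imaginary on the compact Cartan, so that $\tau$ fixes $\chi_\la$), one checks that $V^{h}$ again lies in $\HC(\fg,K)_\la$, is irreducible when $V$ is, and has the same $K$-types with the same multiplicities as $V$. Note also that any nonzero $\fu_\R$-invariant hermitian form on an irreducible $V$ is automatically nondegenerate and, being $\fk_\R$-invariant, block-diagonal along the $K_\R$-isotypic decomposition; hence on every multiplicity-one $K_\R$-type, in particular on every lowest $K$-type, it is a nonzero real multiple of the essentially unique positive definite $K_\R$-invariant form, so it is definite there. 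The assertion about lowest $K$-types thus reduces to showing that this sign is constant over the lowest $K$-types and coincides with the one fixed by Vogan \cite{vogan1979}.

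For existence, realize $V$ as the unique irreducible quotient $\bar X(\cD)$ of a standard module $X(\cD)$ \cite{HMSWI}, obtained by real parabolic induction along a cuspidal $P_\R=M_\R A_\R N_\R$ from a datum $D\otimes\nu$, with $D$ a limit of discrete series of $M_\R$ and $\nu$ a character of $A_\R$; since $\la$ is real, $D$ has real infinitesimal character and $\nu$ is real. As $D$ is tempered it carries a $\fg^{M}_\R$-invariant positive definite form, so $D\cong D^{h,\sigma^{M}}$; since the Harish Chandra parameter of $D$ lies on a compact Cartan $\ft\subset\fk$ on which the Cartan involution $\theta=\sigma\tau$ of (\ref{cartandecomp2}) acts trivially, $D\cong D^{\theta}$; and the two Hermitian duals are related by $D^{h,\tau^{M}}=(D^{h,\sigma^{M}})^{\theta}$. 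Combining these, $D\cong D^{h,\tau^{M}}$, so $D$ carries a nonzero---in general indefinite---$\fu^{M}_\R$-invariant hermitian form. Together with a hermitian form on the one-dimensional $\nu$, where $i\fa_\R$ acts by purely imaginary scalars and is therefore automatically skew-adjoint, this yields a hermitian form on $D\otimes\nu$ invariant under the compact real form of the Levi factor $M_\R A_\R$; inducing it along $P_\R$ produces a nonzero $\fu_\R$-invariant hermitian form on $X(\cD)$. Its radical is a proper $(\fg,K)$-submodule, hence contained in the unique maximal submodule of $X(\cD)$, so the form descends to a nonzero $\fu_\R$-invariant hermitian form on $V=\bar X(\cD)$.

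Uniqueness is now formal. The previous paragraph gives a nonzero element of $\Hom_{(\fg,K)}(V,V^{h})$, so $V\cong V^{h}$; since both are irreducible, Schur's lemma makes this $\Hom$ space one-dimensional, and the conjugate-linear involution $T\mapsto T^{h}$ on it has a one-real-dimensional fixed set. Hence a nonzero $\fu_\R$-invariant hermitian form exists and is unique up to a real scalar. For the sign I would go back to the construction: by Frobenius reciprocity each lowest $K$-type $\mu$ of $V$ already occurs in $X(\cD)$, built from the unique minimal $M_\R\cap K_\R$-type of $D$, on which the $\fu^{M}_\R$-form is definite of a fixed sign---the $\theta$-twist multiplies the positive $\fg^{M}_\R$-form by a sign depending on the $\fp$-degree relative to the minimal type, and that degree vanishes there---and the induced form on the $\mu$-isotypic subspace of $X(\cD)$ inherits that sign. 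When $X(\cD)$ has several lowest $K$-types, they are permuted transitively by the relevant $R$-group of the inducing datum, and, setting the induction up equivariantly for that group, one obtains the same sign throughout. A short computation identifies this normalization with Vogan's, and one rescales to make the form positive on the lowest $K$-types.

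The hard part will be that last step: showing that the induced $\fu_\R$-invariant form is definite with one and the same sign on every lowest $K$-type. This requires the fine structure of standard modules---Vogan's lowest $K$-type theory, the multiplicity-one and $R$-group statements, and an explicit Frobenius-reciprocity evaluation of the induced hermitian form on these $K$-types---all compatibly with the surjection $X(\cD)\twoheadrightarrow V$. By contrast, existence and uniqueness are comparatively soft; the only genuinely delicate points there are the compatibility of real parabolic induction with invariant hermitian forms and the nonvanishing of the induced form, both standard but requiring care when $D$ is merely a limit of discrete series, or a unitary character of $M_\R$ in the most degenerate case.
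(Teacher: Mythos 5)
The paper does not itself prove this theorem: it is stated with attribution to Vogan et al.\ \cite{vogan2009}, and the paper's own contribution is the \emph{alternative} geometric construction of $(\,,\,)_{\fu_\R}$ in Section~\ref{sec:flag variety}, where the form arises as $\int_X P(\sigma,\overline\tau)\,dm$ from the $\cD$-module polarization and the $U_\R$-invariant measure (Proposition~\ref{prop:invariance}). That route is completely disjoint from yours. Your algebraic argument via real parabolic induction is presumably close in spirit to Vogan's own. The existence and uniqueness portions of your sketch are essentially sound: the Hermitian-dual reformulation, the fact that $V\mapsto V^h$ preserves $\HC(\fg,K)_\la$ when $\la\in\fh_\R^*$, the chain $D^{h,\tau^M}\cong(D^{h,\sigma^M})^\theta\cong D^\theta\cong D$ for a (limit of) discrete series whose Harish Chandra parameter lies on a $\theta$-fixed compact Cartan, and Schur's lemma together with the conjugate-linear involution $T\mapsto T^h$ on the one-dimensional $\Hom_{(\fg,K)}(V,V^h)$ giving uniqueness up to a real scalar.

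Two points remain genuine gaps, one of which you flag yourself. First, the assertion that inducing a $\fu^M_\R\,A_{U,\R}$-invariant form on $D\otimes\nu$ along $P_\R$ yields a $\fu_\R$-invariant form on $X(\cD)$ is not an instance of ordinary unitary parabolic induction, since $\nu$ is non-unitary; one has to pass to the compact picture and verify invariance under all of $\fu_\R$, and also justify nonvanishing of the induced form before the radical argument can run. Second, and this is the substantive gap, the sign-constancy across all lowest $K$-types is only sketched: invoking the $R$-group permutation and a vanishing ``$\fp$-degree'' near the minimal $K\cap M_\R$-type of $D$ is a plausible heuristic, but it requires a bottom-layer argument that tracks the sign through the Frobenius-reciprocity identification and an actual equivariant normalization of the induction so that the $R$-group acts by isometries. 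You honestly call this ``the hard part''; it is precisely the content that makes the last clause of the theorem nontrivial, and without it the normalization ``positive definite on the lowest $K$-types'' is not yet established. In the paper this issue is handled implicitly by the geometric normalization of the polarization, with the assertion (unproved here) that the resulting sign agrees with Vogan's.
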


We shall construct the $\fu_\R$-invariant hermitian form geometrically in section \ref{sec:flag variety}. Vogan and
his collaborators not only established the existence of the $\fu_\R$-invariant hermitian form by algebraic methods,
and had the insight to relate it to the $\fg_\R$-invariant form when the latter exists. Their observation can be
stated as follows:

\begin{prop}\label{prop:urgr}
Let $V\in \HC(\fg,K)_\l$ be an irreducible Harish Chandra module as in theorem {\rm \ref{thm:URinvariantform}},
which in addition carries a non-zero $\fg_\R$-invariant hermitian form $(\ ,\ )$. When $(\ ,\ )$ is suitably
rescaled by a positive constant, there exists a $K$-invariant linear map $T: V \to V$ such that
\begin{enumerate}
\vspace{-2pt}
\item[{\rm i)}]\ \ $(u,v) \,=\, (Tu,v)_{\fu_R}$\,\ for all $\,u,v\in V$\,,
\item[{\rm ii)}]\ \ $T^2 \,=\, 1$\,,\,\ and
\item[{\rm iii)}]\ \ $T\,\zeta\,v \,=\, (\theta \zeta)\,(T\,v)$\,\ for all $\,v\in V\,,\,\zeta\in\fg$\,.
\end{enumerate}
\end{prop}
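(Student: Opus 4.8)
The plan is to exhibit $T$ as the operator relating the two hermitian forms, to derive (iii) and the relation $T^{2}=\text{scalar}$ by formal manipulation of the invariance identities, and then to pin down the scalar using the one non-formal ingredient available, the normalization of $(\ ,\ )_{\fu_\R}$ in Theorem~\ref{thm:URinvariantform}.

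First I would construct $T$. Since $V$ is irreducible, any nonzero invariant hermitian form has trivial radical, so $(\ ,\ )$ and $(\ ,\ )_{\fu_\R}$ are both non-degenerate; both are $K_\R$-invariant, so — distinct $K$-types being mutually orthogonal by Schur — they are block-diagonal for the $K$-isotypic decomposition $V=\bigoplus_{j}V_{j}$ and non-degenerate on each finite-dimensional block. Hence on $V_{j}$ there is a unique invertible $T_{j}\in\End(V_{j})$ with $(u,v)=(T_{j}u,v)_{\fu_\R}$, and $T:=\bigoplus_{j}T_{j}$ is a well-defined invertible endomorphism of $V$ satisfying (i).

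Next, (iii). Complexifying the invariance identities, $\fg_\R$-invariance of $(\ ,\ )$ amounts to $(\zeta u,v)=-(u,(\sigma\zeta)v)$ for all $\zeta\in\fg$, and similarly $(\eta x,y)_{\fu_\R}=-(x,(\tau\eta)y)_{\fu_\R}$. Feeding (i) into the chain
\begin{align*}
(T\zeta u,v)_{\fu_\R}&=(\zeta u,v)=-(u,(\sigma\zeta)v)\\
&=-(Tu,(\sigma\zeta)v)_{\fu_\R}=((\tau\sigma\zeta)\,Tu,v)_{\fu_\R}=((\theta\zeta)\,Tu,v)_{\fu_\R}
\end{align*}
and using non-degeneracy of $(\ ,\ )_{\fu_\R}$ gives $T\zeta u=(\theta\zeta)(Tu)$, which is (iii); taking $\zeta\in\fk$, on which $\theta$ is trivial, shows $T$ commutes with $\fk$, and together with the $K_\R$-invariance of both forms and the uniqueness of $T$ this upgrades to full $K$-invariance (the passage through the component group of $K$ being routine). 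Applying (iii) twice with $\theta^{2}=1$ on $\fg$ shows $T^{2}$ commutes with the $\fg$-action; since it also commutes with $K$ and $V$ is irreducible of countable dimension over $\C$, Dixmier's form of Schur's lemma yields $T^{2}=\lambda\,\mathrm{Id}$ with $\lambda\in\C^{\times}$.

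It remains to prove $\lambda>0$, which I expect to be the only real obstacle — everything else is formal bookkeeping with the two invariance relations. Hermitian symmetry of both forms gives $(Tu,v)_{\fu_\R}=(u,v)=\overline{(v,u)}=\overline{(Tv,u)_{\fu_\R}}=(u,Tv)_{\fu_\R}$, so $T$ is self-adjoint for $(\ ,\ )_{\fu_\R}$; but that form is indefinite, so this alone does not control the spectrum of $T$ — a priori $T^{2}$ could be a negative scalar, i.e.\ $T$ a complex structure intertwining $\fg$ with $\theta(\fg)$, in which case no positive rescaling would make $T$ an involution. Here I would invoke Theorem~\ref{thm:URinvariantform}: $(\ ,\ )_{\fu_\R}$ is positive definite on a lowest $K$-type $V_{j_{0}}$, so on that block $T$ is self-adjoint for a positive-definite form and hence has real eigenvalues; since $T^{2}=\lambda\,\mathrm{Id}$ there, $\lambda$ is the square of a nonzero real number, i.e.\ $\lambda>0$. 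Finally, replacing $(\ ,\ )$ by $\lambda^{-1/2}(\ ,\ )$ — a rescaling by a positive constant — replaces $T$ by $\lambda^{-1/2}T$, leaves (i) and (iii) intact, and turns (ii) into $(\lambda^{-1/2}T)^{2}=\mathrm{Id}$. It is precisely Vogan's definiteness of $(\ ,\ )_{\fu_\R}$ on a lowest $K$-type that makes the last step work.
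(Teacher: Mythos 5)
Your proposal is correct and takes essentially the same route as the paper: build $T$ from the two non-degenerate hermitian forms, extract (iii) from the invariance identities via $\theta=\tau\sigma$, invoke Schur's lemma to get $T^{2}=\lambda\,\mathrm{Id}$, and use lowest $K$-type information to pin down $\lambda>0$ before rescaling. The only cosmetic difference is in the last step: you deduce $\lambda>0$ from self-adjointness of $T$ for the positive-definite restriction of $(\ ,\ )_{\fu_\R}$ to a lowest $K$-type, whereas the paper instead uses multiplicity one of the lowest $K$-type to conclude that $T$ acts there by a scalar which must be real because both $(v,v)$ and $(v,v)_{\fu_\R}$ are real; both arguments rest on Vogan's results and are interchangeable.
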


The linear map $T$ can also be described explicitly, in geometric terms. That will be done in section \ref{sec:flag
variety}. If $G$ and $K$  have the same rank, the relationship between the two hermitian forms has a simple,
completely explicit, combinatorial description; see corollary \ref{cor:urgr} below.

\begin{proof}
Let $V^c$ denote the complex conjugate of the space of $K$-finite vectors in the algebraic dual $V^*$ of $V$, or
equivalently, the space of conjugate linear maps from $V$ to $\C$ that vanish on a subspace of finite codimension.
The two hermitian forms induce two conjugate linear isomorphisms $i_{\fu_\R}, i_{\fg_\R} : V
\overset{{}_\sim}{\rightarrow} V^c$. Then $T = i_{\fu_\R}^{-1}\circ i_{\fg_\R}: V \overset{{}_\sim}{\rightarrow} V$
is a $\C$-linear isomorphism, and $T$ satisfies the condition i) by construction. Both hermitian forms are
$K_\R$-invariant, hence $T$ has this property as well. But the complexification $K$ meets every connected component
of $K_\R$, so the $\C$-linear map $T$ commutes with the action of all of $K$. On the infinitesimal level this
implies iii), at least for any $\zeta\in\fk$\,.\,\ Any $\zeta\in\fp$\,\ acts in a symmetric manner with respect to
$(\ ,\ )_{\fu_\R}$ and skew symmetrically with respect to $(\ ,\ )$; cf. (\ref{cartandecomp3}). Thus iii) also
applies to any $\zeta\in\fp_\R$\,, hence finally to any $\zeta\in\fg$\,.\,\ Using iii) twice, we find that $T^2$
commutes with the action of both $K$ and $\fg\,$~-- in other words, $T^2$ is an automorphism of the irreducible
Harish Chandra module $V$, and consequently $T^2 = c\,1$ with $c\in\C^*$. If we scale $T$ by $|c|^{-1/2}$ and $(\
,\ )$ by $|c|^{1/2}$, we can arrange that $|c|=1$ after rescaling, without destroying i) and iii). Since the lowest
$K$-types have multiplicity one, the $K$-invariant linear map $T$ must act as a multiple of the identity on each of
the minimal $K$-types, necessarily by a real multiple of the identity since $(v,v)_{\fu_R}$ and $(v,v)$ take only
real values. Changing the sign of $(\ ,\ )$ if necessary, we can arrange that $T^2$ acts by a positive constant, hence $c=1$, implying ii).
\end{proof}

Let $V^+$ and $V^-$ denote the $+1$ and $-1$ eigenspace of $T$, respectively. Then, as a formal consequence of the
proposition, \vspace{-2pt}
\begin{equation}
\begin{aligned}
\label{V+-}
\!\!\text{\rm a)}\ \ &V\ = V^+ \oplus V^-\,\ \ \text{($K$-invariant direct sum)}\,,\vspace{-2pt}
\\
\!\!\text{\rm b)}\ \ &\fp\,V^+ \subset V^-\,,\ \ \ \fp\,V^- \subset V^+\,,\vspace{-2pt}
\\
\!\!\text{\rm c)}\ \ &V^+\ \text{and}\ V^-\ \text{are orthogonal relative to both $(v,v)_{\fu_R}$ and $(v,v)$}\,,\vspace{-2pt}
\\
\!\!\text{\rm d)}\ \ &v\in V^\epsilon\, \Longrightarrow \ (v,v)_{\fu_R}=\epsilon\,(v,v)\,,\, \ \text{for both $\,\epsilon=+\,$ and $\,\epsilon=-$}\ .
\end{aligned}\vspace{-2pt}
\end{equation}
If $\rk G = \rk K$, the decomposition (\ref{V+-}\,a) can be described completely explicitly, as we shall see next.
In this situatation $G_\R$ contains a compact Cartan subgroup $T_\R$\,,\,\ which we may and shall assume lies in
$K_\R$\,.\,\ We temporarily identify $\fh$ with the complexification of the Lie algebra $\ft_\R$ of $T_\R$\,.\,\
Then $\ft_\R = i\fh_\R$\,,\,\ since weights take purely imaginary values on $\ft_\R$\,. One calls a root $\alpha\in
\Phi(\fh)\cong \Phi(\ft,\fg)\,$ $\ft_\R$-com\-pact or $\ft_\R$-noncompact depending on whether the $\alpha$-root
space $\fg^\alpha$ is contained in $\fk$ or $\fp$. These notions are preserved by the action of the ``real Weyl
group" $W(T_\R,G_\R) = N_{G_\R}(T_\R)/T_\R$, since $K_\R$ contains the normalizer $N_{G_\R}(T_\R)$.  It follows
that the notions of $\ft_\R$-compactness and $\ft_\R$-noncompactness do not depend on the particular choice of
identification $\fh \cong \ft = \C\otimes_\R \ft_\R$\,.\,\  Let $\Lambda_\Phi \subset \Lambda$ denote the lattice
spanned by the roots. Then
\begin{equation}\label{urgr1}
\psi : \Lambda_\Phi \to \{\pm 1\}\ ,\ \ \ \ \psi(\alpha)\ = \ \begin{cases}\ 1\,\ &\text{if $\alpha$ is $T_\R$-compact}\,,\\ \ -1\,\ &\text{if $\alpha$ is $T_\R$-noncompact}\,, \end{cases}
\end{equation}
is a well defined character since the sum of two roots is $T_\R$-compact if and only if the summands are both
$T_\R$-compact or both $T_\R$-noncompact. The Harish Chandra module $V$ breaks up as the direct sum of the various
$K$-isotypic subspaces,
\begin{equation}\label{K-isotypic1}
\begin{aligned}
V \, = \, {\oplus}_\mu\ V(\mu)\,,\  V(\mu)\,=\, \text{$K$-isotypic subspace of highest weight $\mu$\,,}
\\
\text{with \ \ $\mu \in \{\lambda \in \Lambda\,\mid\, (\alpha,\lambda)\geq 0$\ \ if $\alpha\in\Phi^+(\fh)$ is $T_\R$-compact.}\}
\end{aligned}
\end{equation}
We single out $\mu_0$, the highest weight of one of the lowest $K$-types. Then $V=U(\fg)V(\mu_0)$ by
irreducibility. Hence $V(\mu)$ can be nonzero only when $\mu-\mu_0\in\Lambda_\Phi$. We know that $T$ acts on
$V(\mu_0)$ either as the identity or as $-1$. Replacing $(\ ,\ )$ by its negative, if necessary, we may as well
suppose that $T=1$ on $V(\mu_0)$. Also, if $\zeta$ lies in the $\alpha$-root space $\fg^\alpha$, $T\zeta T^{-1}=
(-1)^{\psi(\alpha)}\zeta$\,\ by assertion iii) of proposition \ref{prop:urgr}. This implies:

\begin{cor}\label{cor:urgr}
In the equal rank case, if $V(\mu_0) \subset V^+$ as may be assumed, the isotypic subspace $V(\mu)$ lies in $V^+$
or $V^-$ depending on wether $\psi(\mu-\mu_0)$ equals $\,+1$ or $\,-1\,$.
\end{cor}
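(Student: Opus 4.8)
The plan is to deduce the corollary directly from the multiplicative behavior of $T$ under the $\fg$-action established in assertion iii) of Proposition \ref{prop:urgr}, together with the irreducibility of $V$. First I would record the key mechanism: by iii), if $\zeta$ lies in the root space $\fg^\alpha$ for some $\alpha\in\Phi(\fh)\cong\Phi(\ft,\fg)$, then $T\zeta T^{-1} = \theta(\zeta)$; since $\theta$ acts on $\fk$ as $+1$ and on $\fp$ as $-1$, we get $T\zeta T^{-1} = \psi(\alpha)\,\zeta$, with $\psi$ the root-space sign character of \eqref{urgr1}. Equivalently, conjugation by $T$ scales $\fg^\alpha$ by $\psi(\alpha)$. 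The normalization $T = 1$ on $V(\mu_0)$ has already been arranged, so $V(\mu_0)\subset V^+$.

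Next I would propagate the eigenvalue of $T$ from $V(\mu_0)$ to an arbitrary isotypic piece $V(\mu)$. By irreducibility $V = U(\fg)V(\mu_0)$, so every vector in $V$ is a finite sum of terms $\zeta_1\cdots\zeta_r\,w$ with $w\in V(\mu_0)$ and each $\zeta_i$ chosen in a root space $\fg^{\alpha_i}$ (one may pass to a root-vector basis of $\fg$; the Cartan $\fh\subset\fk$ contributes trivially). Applying the relation $T\zeta_i = \psi(\alpha_i)\,\zeta_i\,T$ repeatedly and then $Tw = w$ yields $T(\zeta_1\cdots\zeta_r\,w) = \psi(\alpha_1+\cdots+\alpha_r)\,\zeta_1\cdots\zeta_r\,w$. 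For this scalar to be unambiguous on the subspace spanned by such products landing in a fixed $V(\mu)$, I would invoke the weight bookkeeping already set up: applying $\zeta_1\cdots\zeta_r$ shifts the $K$-highest weight by $\alpha_1+\cdots+\alpha_r$ modulo the span of the compact positive roots, so any product landing in $V(\mu)$ has $\alpha_1+\cdots+\alpha_r \equiv \mu-\mu_0$ modulo the sublattice spanned by the $T_\R$-compact roots. Since $\psi$ vanishes (is $+1$) on that sublattice by construction of \eqref{urgr1}, the value $\psi(\alpha_1+\cdots+\alpha_r) = \psi(\mu-\mu_0)$ depends only on $\mu$, not on the chosen product. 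Hence $T$ acts on all of $V(\mu)$ by the single scalar $\psi(\mu-\mu_0)\in\{\pm1\}$, which means $V(\mu)\subset V^+$ if $\psi(\mu-\mu_0)=+1$ and $V(\mu)\subset V^-$ if $\psi(\mu-\mu_0)=-1$. Summing over $\mu$ and comparing with \eqref{V+-}a) gives the asserted description of the decomposition.

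The one point requiring genuine care — the main obstacle — is the well-definedness of the scalar: a priori $T$ acts on $V(\mu)$ only as some scalar (by Schur, since $T$ is $K$-invariant and $V(\mu)$ is $K$-isotypic with the lowest $K$-types multiplicity one — and in general one still knows $T$ preserves each $V(\mu)$ and is forced to be scalar once one checks $V(\mu)$ is $K$-irreducible as an isotypic component of an admissible module, or alternatively one argues eigenspace by eigenspace), and one must verify that the value produced by the generation argument is consistent. This is exactly what the character property of $\psi$ in \eqref{urgr1} — that a sum of roots is compact iff the summands are "evenly" compact/noncompact — is there to guarantee, so the obstacle dissolves into the already-stated fact that $\psi$ is a well-defined character on $\Lambda_\Phi$ together with the constraint $\mu-\mu_0\in\Lambda_\Phi$ coming from $V=U(\fg)V(\mu_0)$. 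No further input is needed.
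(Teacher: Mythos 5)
Your argument is correct and is exactly the one the paper intends: the paper writes ``This implies:'' after setting up precisely the three facts you use ($V=U(\fg)V(\mu_0)$ by irreducibility, $T=1$ on $V(\mu_0)$ after the sign adjustment, and $T\zeta T^{-1}=\psi(\alpha)\zeta$ for $\zeta\in\fg^\alpha$), and your generation-and-weight-bookkeeping argument fills in the intended formal consequence. One aside is off: an isotypic component $V(\mu)$ need not be $K$-irreducible (general $K$-types can have multiplicity greater than one), so Schur's lemma does not directly force $T$ to be scalar on $V(\mu)$; but this is harmless, since your generation argument shows directly that $T$ acts by the single scalar $\psi(\mu-\mu_0)$ on every $\ft$-weight vector of $V(\mu)$, with no a priori scalar-action claim needed.
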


In particular, $V^+$ and $V^-$ have no $K$-types in common. That remains true even in the unequal rank case,
provided one replaces $G_\R$ with the appropriate extension of $G_\R$ by the two element group $\{1,T\}$.

Back in the equal rank case, one can extend the character $\psi$ of (\ref{urgr1}) to a homomorphism from the full
weight lattice to the group of roots of unity,
\begin{equation}\label{urgr2}
\psi \, : \,\Lambda \, \to \, \{\,z\in \C\,\mid \, z^N=1\,\,\ \text{for some $N\in\Z_{>0}$\,}\}\,.
\end{equation}
In terms of this notation, with $c=\psi(-\mu_0)$, the relation between the two hermitian forms can be restated as
in (\ref{grvsur}), in the introduction.\vspace{2pt}

\section{Brief review of the theory of mixed Hodge modules}\label{sec:MHM_summary}

In the sequence of papers \cites{saito:rims1989,saito:compositio1990,saito:rims1990,saito:rims1991} Morihiko Saito
developed his theory of mixed Hodge modules. We shall need a slightly extended version of the theory: Saito
requires the existence of a rational structure, which implies that the local monodromy transformations for the
underlying local systems have roots of unity as eigenvalues. For our applications we need to relax this hypothesis;
all we can assume is that the eigenvalues have absolute value one, and this follows from the presence of a
polarization. On the other hand, we shall only consider complex algebraically constructible sheaves and,
correspondingly, $\cD$-modules\begin{footnote}{For us, ``$\cD$-module" shall always mean left $\cD$-module. That is
not the case in Saito's papers.}\end{footnote} in the complex algebraic setting. Not all of the theory of mixed
Hodge modules depend on this latter restriction, but it does simplify various statements.

We begin with the notions of complex Hodge structure and complex mixed Hodge structure. Let $H$ be a finite
dimensional complex vector space. A complex Hodge structure of weight $n$ on $H$ consists of the datum of a direct
sum decomposition
\begin{equation}
\label{hodge1}
H \ = \ {\oplus}_{p+q=n}\, H^{p,q}\,.
\end{equation}
The cohomology group in degree $n$ of a compact K\"ahler manifold carries a natural Hodge structure of weight $n$.
In that case, $n\geq 0$ and the summation extends only over non-negative integers $p$, $q$. In general, $p$ and $q$
are allowed to be arbitrary integers adding up to $n$, which need not be positive, either. The Hodge structure
(\ref{hodge1}) determines, and is determined by, the two increasing filtrations $F_\cdot \,H$ and $\overline
F_{\!\cdot}\, H$,
\begin{equation}
\label{hodge2}
F_p \,H \ = \ {\oplus}_{r\geq -p}\, H^{r,n-r},\ \ \ \overline F_{\!q} \,H\ = \ {\oplus}_{s\geq -q}\,\, \overline H^{n-s,s} \,;
\end{equation}
indeed, the identity
\begin{equation}
\label{hodge3}
H^{p,q} \ = \ F_{-p}\,H \cap \overline F_{\!-q}\, H
\end{equation}
recovers the summands $H^{p,q}$ from the two filtrations. One calls $F_\cdot \,H$ the Hodge filtration, and
$\overline F_{\!\cdot}\, H$ is the conjugate Hodge filtration. Traditionally one uses the decreasing filtrations
$\{F^p H\} = \{F_{\!-p} \,H\}$ and $\{\overline F^{\,q} H\} = \{\overline F_{\!-q}\, H\}$ instead. However, in the
context of $\mathcal D$-modules, which crucially enter the theory of mixed Hodge modules, increasing filtrations
are more natural. For this reason we work with increasing filtrations right from the beginning. Also, in Hodge
theory one customarily assumes that the underlying vector space $H$ is defined over $\Q$, and that $H^{q,p}$ is the
complex conjugate of $H^{p,q}$. In that case, the complex conjugate vector space $\overline H$ coincides with $H$,
and $\overline F_{\!\cdot}\, H$ is the complex conjugate of the filtration $F_\cdot \,H$, as the notation suggests.
We do not require the existence of a rational structure, or even a real structure, and the notation $\overline
F_{\!\cdot}\, H$ has merely symbolic significance. Of course one can extend a complex Hodge structure
(\ref{hodge2}) on $H$ to one on $H \oplus \overline H$, which has at least a real structure, by formally taking the
direct sum of $H= \oplus\, H^{p,q}$ and $\overline H = \oplus\,{\overline H}^{\,p,q}$,\,\ with ${\overline
H}^{\,p,q}=\overline {H^{\,q,p}}$.

One can take not only the direct sum of two complex Hodge structures, provided they have the same weight, but also
the tensor product of any two, of possible different weights, as well as symmetric and tensor powers and the dual
of a single complex Hodge structure. The weights behave in the obvious way. A morphism of complex Hodge structures
is a linear map between the underlying vector spaces which preserves the Hodge types, or equivalently, the two
filtrations. By definition, a polarization of the Hodge structure (\ref{hodge1}) is a -- necessarily nondegenerate
-- hermitian pairing $Q : H \times \overline H \to \C$ such that
\begin{equation}
\begin{aligned}
\label{hodge4}
&\text{a)}\,\ Q(H^{p,q},H^{r,s}) = 0\,\ \text{if}\,\ (r,s) \neq (p,q)\,,\,\ \text{and}
\\
&\text{b)}\,\ (-1)^p\,(2\pi i)^n\, Q(v,\overline v)>0\,\ \text{for all}\,\ v\in H^{p,q}, \,\ v\neq 0\,.
\end{aligned}
\end{equation}
We call the datum $(H,\,F_\cdot \,H,\, Q\,)$ a polarized complex Hodge structure of weight $n\,$. Note that
$H^{p,q} = F_{-p}H \cap (F_{\!-p-1}\,H)^\perp\!$,\,\ so that
\begin{equation}
\label{hodge5}
\overline F_{q}\,H\ = \ {\oplus}_{s\geq -q}\, H^{n-s,s} \ = \ {\oplus}_{s\geq -q}\, \( F_{s-n}\,H \cap (F_{s-n-1}\,H)^\perp\)\,.
\end{equation}
The conjugate Hodge filtration $\overline F_{\!\cdot}\, H$ can therefore be reconstructed from the Hodge
filtration, in conjunction with the polarization.

A complex mixed Hodge structure on the finite dimensional complex vector space $H$ consists of three increasing
filtrations $F_\cdot \,H$, $\overline F_{\!\cdot}\, H$, $W_\cdot \,H$, such that for all integers $k$,
\begin{equation}
\begin{aligned}
\label{mhodge1}
&\text{the induced filtrations}\,\ F_p(W_k H)=F_p \,H\cap W_kH\,,
\\
&\ \ \ \ \ \ \overline F_q(W_k H)=\overline F_q \,H\cap W_kH\,,\,\ \text{determine a complex}
\\
&\ \ \ \ \ \ \ \ \ \ \ \ \text{Hodge structure of weight $k$ on}\,\ W_kH / W_{k-1}H\,.
\end{aligned}
\end{equation}
One calls $W_\cdot \,H$ the weight filtration. If each of the quotients $W_kH / W_{k-1}H$ comes equipped with a
specific polarization, one calls that a graded polarized complex mixed Hodge structure. As in the case of Hodge
structures of pure weight, one customarily assumes that $H$ is defined over $\Q$, that $\overline F_{\!\cdot}\, H$
is the complex conjugate of $F_\cdot \,H$, and in addition, that the weight filtration is defined over $\Q$. As
before, one can embed any complex mixed Hodge structure into one with underlying real structure on $H \oplus
\overline H$. The linear algebra of mixed Hodge structures -- which is not entirely trivial -- depends not on the
rational structure, but only on the real structure. All conclusions therefore apply in the complex case. In
particular morphisms of complex mixed Hodge structures -- i.e., linear maps between the underlying vector spaces
that preserve the three types of filtration -- preserve the filtrations strictly:
\begin{equation}
\begin{aligned}
\label{mhsstrictness}
&T(F_pH_1) = (TH_1)\cap F_pH_2 \,,\ \ T(\overline F_{\!q} H_1) = (TH_1)\cap \overline F_{\!q}H_2\,,
\\
&\qquad\qquad\ \ \ \ \text{and} \ \ \ T(W_kV_1) = (T\,V_1)\cap W_kV_2
\end{aligned}
\end{equation}
whenever $T: H_1 \to H_2$ is a morphism of complex mixed Hodge structures.

A polarized complex variation of Hodge structure of weight $n$\,, on a complex smooth quasi-projective variety $Z$,
is specified by the datum of a vector bundle ${\bH} \to Z$, a flat connection $\nabla$ on $\bH$, a flat hermitian
form $Q$ on the fibers of $\bH$, and an increasing filtration of the sheaf $\O_Z({\bH})$ by locally free sheaves of
$\O_Z$-modules
\begin{equation}
\label{vhs1}
0 \subset F_a \O_Z({\bH}) \subset F_{a+1}\O_Z({\bH}) \subset \dots \subset F_p\O_Z({\bH}) \subset \dots \subset \O_Z(\bH)\,,
\end{equation}
subject to the following two conditions:\addtocounter{equation}{1}
\begin{equation*}
\label{vhs2*}
\nabla F_p\O_Z({\bH}) \ \subset \ F_{p+1}\O_Z({\bH})\ \ \text{for all $p$}\,,\tag*{(\theequation a)}
\end{equation*}
and\bigskip
\begin{equation*}
\begin{aligned}
\label{vhs3*}
&\text{for each $z\in Z$,}\,\ \big(\,{\bH}_z,\,\left(F_\cdot \O_Z({\bH})\right)/\left(\mathcal I_z F_\cdot \O_Z({\bH})\right),\, Q_z\,\big)\,\ \text{is a}
\\
&\qquad\ \text{polarized Hodge structure on ${\bH}_z$, of weight $n$}\,;
\end{aligned} \tag*{(\theequation b)}
\end{equation*}
here $\mathcal I_z$ denotes the ideal sheaf of $\{z\} \subset Z$,\,\ ${\bH}_z$ the fiber of $\bH$ at $z$, and $Q_z$
the restriction of $Q$ to ${\bH}_z$. One refers to the condition \ref{vhs2*} as Griffiths transversality. In
\ref{vhs3*} the conjugate Hodge filtration is not specified; as mentioned earlier, in the presence of a
polarization the Hodge filtration completely determines the conjugate Hodge filtration.

As usual, $ \cD_Z$ shall denote the sheaf of linear differential operators on the smooth quasi-projective variety
$Z$, with algebraic coefficients. It is a sheaf of algebras over $\O_Z$, and is naturally filtered by the degree of
differential operators. We let $\Mod(\cD_Z)_{\text{coh}}$ denote the category of (sheaves of) $\cD_Z$-modules which
are coherent over $\cD_Z$. A good filtration of $\cM\in \Mod(\cD_Z)_{\text{coh}}$ is an increasing filtration by
$\O_Z$-submodules $F_p\cM \subset \cM$, coherent over $\O_Z$, such that
\begin{equation}
\label{goodfilt1}
(\cD_Z)_k F_p\cM \subseteq F_{p+k}\cM \, \ \text{for all $k,\,p$, with equality holding for}\ p \gg 0 \,;
\end{equation}
$(\cD_Z)_k=F_k\cD_Z$ refers to the filtrants of $\cD_Z$, of course. Good filtrations exist, at least locally. The
associated graded $\gr_ \cdot\cM$ with respect to a good filtration $F_\cdot\cM$ can be regarded as a coherent
sheaf of $\O_{T^*Z}$-modules\begin{footnote}{more precisely, $\gr\cM = \pi_*\cF$ is the sheaf direct image, under
the natural projection $\pi : T^*Z \to Z$, of a graded coherent $\O_{T^*Z}$-module $\cF$ on $T^*Z$, and $\gr_\cdot
\cD_Z = \pi_*\O_{T^*Z}$.}\end{footnote} on the cotangent bundle $T^*Z$\,. As is easy to see, the support of
$\gr_\cdot \cM$ in $T^*Z$~-- the so-called singular support of $\cM$~-- does not depend on the particular choice of
a good filtration. We recall that $\cM$ is said to be holonomic if its singular support has the minimal possible
dimension, or equivalently, if the singular support is Lagrangian. Alternatively and equivalently one can
characterize the property of being holonomic by the cohomological condition
\begin{equation}
\label{holonomic1}
\cExt^k_{\cD_Z}(\cM,\cD_Z)\ = \ \rm 0\ \ \ \text{if}\ \ k\neq \dim Z\,.
\end{equation}
In other words, $\cM$ is holonomic if and only if it is Cohen-Macaulay.

Mixed Hodge modules involve filtered $\cD$-modules that are are regular holonomic. Regular holonomic $\cD$-modules
are most easily defined in a functorial fashion: one can describe them as constituting the smallest full
subcategory of the category of holonomic modules invariant under the usual operations on $\cD$-modules, invariant
under extensions, and containing all $\cD$-modules generated by flat connections with regular singularities. It is
not obvious that such a category exists. That can be established by characterizing regular holonomic modules
directly, for example by reduction to the case of a one dimensional base manifold $Z$. It is then a non-trivial
matter to show that the the resulting category is indeed the smallest full subcategory of the category of holonomic
modules possessing the invariance properties spelled out before. Of course, ``invariance under the usual
operations" requires passage to the derived category, since some of them only exist on the derived level. The
definition of the category of mixed Hodge modules is entirely analogous, again with a functorial working
definition, and a direct characterization involving a reduction to the one dimensional case.

Let us write $\Mod(\cD_Z)_{\text{rh}}$ for the category of regular holonomic $\cD_Z$-modules, and
$\Mod(\cD_Z)_{\text{frh}}$ for the category of all $\cM \in \Mod(\cD_Z)_{\text{rh}}$ which come equipped with a
specific good filtration. Forgetting the filtration, one can regard each $\cM \in\Mod(\cD_Z)_{\text{frh}}$ as
defining an object in $\Mod(\cD_Z)_{\text{rh}}$. Morphisms in $\Mod(\cD_Z)_{\text{frh}}$ are required to preserve
the filtration, of course; a morphism $T: \cM \to \cN$ is said to be strict if $T(F_k\cM) = T(\cM)\cap F_k\cN$ for
all $k$. It was mentioned already that $\Mod(\cD_Z)_{\text{rh}}$ is stable under all the usual operations and
constructions. On the other hand, $\Mod(\cD_Z)_{\text{frh}}$ has very limited functorial properties in general. One
way to view the mixed Hodge modules is, roughly speaking, as specifying a subcategory of $\Mod(\cD_Z)_{\text{frh}}$
with funtorial properties as extensive as those of $\Mod(\cD_Z)_{\text{rh}}$.

For our purposes, the abelian category of complex mixed Hodge modules, $\CMHM(Z)$, is particularly relevant. Its
objects are pairs $(\cM,W_\cdot\cM)$, consisting of some $\cM\in\Mod(\cD_Z)_{\text{frh}}$ and a finite increasing
filtration $W_\cdot\cM$ of $\cM$ by submodules $\cN\in\Mod(\cD_Z)_{\text{frh}}$ of $\cM$, each equipped with the
good filtration induced by $\cM$. Not every such pair describes an object in $\CMHM(Z)$, of course. For reasons
that shall become apparent, we call $W_\cdot\cM$ the weight filtration and the intrinsic good filtration of $\cM$
the Hodge filtration. A morphism in $\CMHM(Z)$ is a morphism of the underlying $\cD_Z$-modules which preserves both
filtrations. At first glance this appears to be a weak requirement, but it turns out that any such morphism is
strict with respect to both filtrations. An object $(\cM,W_\cdot\cM)$ of $\CMHM(Z)$ is said to be pure of weight
$k$ if $W_k\cM = \cM$ and $W_{k-1}\cM = 0$. The full subcategory of pure objects is $\CHM(Z)$, the category of
complex mixed Hodge modules on $Z$. We denote the bounded derived categories of, respectively, $\CMHM(Z)$ and
$\Mod(\cD_Z)_{\text{rh}}$ by $D^b(\CMHM(Z))$ and $D^b(\Mod(\cD_Z)_{\text{rh}})$. We extend the notion of weights to
$D^b(\CMHM(Z))$ by the convention that
\begin{equation}
\label{weight_of_cohomology}
\begin{aligned}
&\text{$\cM\,$ has weight $\,\leq k\,$ if $\,H^\ell(\cM)\,$ has weight $\,\leq k+\ell\,$ for all $\ell$\,,\,\ and}
\\
&\text{$\cM\,$ has weight $\,\geq k\,$ if $H^\ell(\cM)\,$ has weight $\,\geq k+\ell\,$ for all $\ell$}\,.
\end{aligned}
\end{equation}
Thus, if $\cM\in D^b(\CMHM(Z))$ is pure of weight $k$, then $H^\ell(\cM)$ is pure of weight $k+\ell$\,.\,\ The
passage to the underlying filtered $\cD_Z$-module, followed by forgetting the filtration, defines a forgetful
functor from $\CMHM(Z)$ to $\Mod(\cD_Z)_{\text{rh}}$, and also on the level of the derived categories. We shall say
that a functor on $\Mod(\cD_\centerdot\,)_{\text{rh}}$ lifts to $\CMHM({}^{{}_{\displaystyle\centerdot}})$ if there
exists a commutative square incorporating the original functor and the two forgetful functors. We use the same
terminology for the derived categories, of course.

We can now state the properties which characterize $\CMHM(Z)$, as the smallest category satisfying them all. First
of all,\addtocounter{equation}{1}
\begin{equation*}
\label{defcr*}
\text{every $\cM\in\CHM(Z)$ is completely reducible},\tag*{(\theequation a)}
\end{equation*}
and for every $\cN\in\CMHM(Z)$ and every $k$,
\begin{equation*}
\label{defgr*}
\text{$\gr_{W,k}\cN\in\Mod(\cD_Z)_{\text{frh}}$ exists as object in $\CHM(Z)$, of weight $k$}.\tag*{(\theequation b)}
\end{equation*}
If $f:Z \to X$ is an algebraic map between smooth complex quasi-projective varieties,
\begin{equation*}
\label{deffunct*}
\begin{aligned}
&\text{the $\cD$-module functors $f_+, f_! : D^b(\Mod(\cD_Z)_{\text{rh}})\to D^b(\Mod(\cD_X)_{\text{rh}})$}
\\
&\ \ \ \ \text{and $f^+\!, f^! : D^b(\Mod(\cD_X)_{\text{rh}})\to D^b(\Mod(\cD_Z)_{\text{rh}})$ lift}
\\
&\ \ \ \ \ \ \ \ \text{to functors $f_*, f_! : D^b(\CMHM(Z))\to D^b(\CMHM(X))$},
\\
&\ \ \ \ \ \ \ \ \ \ \ \ \text{respectively $f^*, f^! : D^b(\CMHM(X))\to D^b(\CMHM(Z))$}.
\end{aligned}\tag*{(\theequation c)}
\end{equation*}
Because of the Cohen-Macaulay property (\ref{holonomic1}), the definition of the duality functor $\D$ does not
require passage to the derived category:
\begin{equation*}
\label{defduality*}
\begin{aligned}
&\text{\,\,i)\ \ the functor $\D$ lifts from $\Mod(\cD_Z)_{\text{rh}}$ to $\CMHM(Z)$\,, and}
\\
&\text{ii)\ \ if $\cN\in \CHM(Z)$ has weight $k$,\,\ $\D\cN$ is pure of weight $-k$\,.}
\end{aligned}\tag*{(\theequation d)}
\end{equation*}
Recall that the $\cD$-module dual $\D\cM$ of any $\cM \in\Mod(\cD_Z)_{\text{rh}}$ can be identified with
$\cExt^{\,\dim Z}_{\cD_Z}(\cM,\cD\otimes ({\rm \Omega}^{\,\dim Z})^{-\rm 1})$.  For singleton spaces,
\begin{equation*}
\label{singleton*}
\begin{aligned}
&\text{$\CMHM(\{\text{pt}\})$ is isomomorphic to the category}
\\
&\qquad\ \ \text{of complex mixed Hodge structures.}
\end{aligned}\tag*{(\theequation e)}
\end{equation*}
Complex mixed Hodge modules involve two filtrations, the Hodge and weight fitrations. Complex mixed Hodge
structures -- in particular, $\CMHM(\{\text{pt}\})$ -- involve the conjugate Hodge filtration in addition. This
apparent inconsistency will be resolved in the appendix. Finally,
\begin{equation*}
\label{defchm*}
\begin{aligned}
\text{every polarized variation of Hodge structure of weight $n$}
\\
\text{defines an object in $\CHM(Z)$, of weight $n+\dim Z$}.
\end{aligned}\tag*{(\theequation f)}
\end{equation*}
Of course this forces us define the weight filtration on $\O_Z({\bH})$ by decreeing it to be pure of weight $n +
\dim X$. The Hodge filtration of the resulting object in $\CHM(Z)$ coincides with the Hodge filtration of the
original variation of Hodge structure.

Let us mention two important properties of $\CMHM(Z)$ that follow from its definition. Recall the convention
(\ref{weight_of_cohomology}). If $f:Z\to X$ is an algebraic map,
\begin{equation}
\label{lowering_and_raising_weights}
\text{the functors $f^*$ and $f_!$ lower weights, and $f_*$ and $f^!$ raise them;}
\end{equation}
here ``lower" and ``raise" are to be taken in the sense of non-increasing and non-deceasing, respectively. Then
(\ref{lowering_and_raising_weights}) has clear meaning when the functors are applied to a pure object. In general,
the meaning is clarified by the strict\-ness property
\begin{equation}
\label{mhmstrictness}
\begin{aligned}
\text{every morphism of complex mixed Hodge modules preserves both}
\\
\text{the Hodge filtration and the weight filtration strictly.}
\end{aligned}
\end{equation}
In particular, there exist no nontrivial morphisms in $\CHM(Z)$ between objects of different weights.

Saito's categories of mixed Hodge modules $\MHM(Z)$ and of Hodge modules $\HM(Z)$ carry also a rational structure.
This rational structure, for an object $(\cM,W_\cdot\cM)$ of $\CMHM(Z)$, consist of a perverse sheaf of $\Q$-vector
spaces $\mathcal S$ and a concrete isomorphism between $\operatorname{DR}(\cM)$~-- the de Rham functor, applied to
$\cM\in \Mod(\cD_Z)_{\text{rh}}$~-- and the complexification $\C\otimes_\Q\mathcal S$ of $\mathcal S$. In effect,
Saito establishes the properties listed above in the presence of rational structures. In the appendix we shall
sketch the modifications of his arguments that are necessary to prove these properties also in the absence of
rational structures.

The category $\Mod(\cD_Z)_{\text{frh}}$ of filtered regular holonomic $\cD_Z$ was studied even before Saito's
theory of mixed Hodge modules \cite{laumon1983}. This category behaves well under the direct image under projective
morphisms, and inverse image under smooth morphisms. In those cases, the effect of the induced morphisms in the
category $\CMHM(Z)$ on the Hodge filtration is consistent with its effect in the category
$\Mod(\cD_Z)_{\text{frh}}$. An algebraic map $f: Z \to X$ between smooth complex quasi-projective varieties can be
decomposed into a closed embedding via its graph, followed by the projection $Z \times X \to X$. Thus, to describe
the functorial behavior of $\Mod(\cD_Z)_{\text{frh}}$ under projective direct image, it suffices to treat the two
special cases of closed smooth embedding and projection in a product with projective fiber. These two cases are
easy to describe explicitly, and they are relevant for our application.

First we consider a smooth closed embedding $i : Z \to X$. We can choose local algebraic coordinates $z_1, \dots,
z_m, z_{m+1},\dots,z_{m+n}$\,,\,\ such that $Z\subset X$ is cut out by the equations $z_1 = z_2 = \dots = z_m = 0$.
The direct image $i_+\cM$ of any $\cM\in \Mod(\cD_Z)_{\text{frh}}$ can be identified as $\cD_X$-module, locally in
terms of the chosen coordinates, with
\begin{equation}
\label{i*frh1}
i_+\cM \ \cong \ \cM[\textstyle\frac{\partial\ }{\partial{z_1}},\frac{\partial\ }{\partial{z_2}},\dots, \frac{\partial\ }{\partial{z_m}}]\,,
\end{equation}
i.e., polynomials in the normal derivatives $\frac{\partial\ }{\partial{z_j}}$\,,\,\ $1\leq j \leq m$\,,\,\ having
local sections of $\cM$ as coefficients. The filtration of $i_+\cM$ can then described in terms of the filtration
of $\cM$ by the identity
\begin{equation}
\label{i*frh2}
F_p\,i_+\cM \ \cong \ {\sum}_{q+|\alpha|+ m\leq p}\ (F_q \, \cM)\, \textstyle(\frac{\partial\ }{\partial{z_1}})^{\alpha_1}(\frac{\partial\ }{\partial{z_2}})^{\alpha_2}\dots(\frac{\partial\ }{\partial{z_m}})^{\alpha_m}\,,
\end{equation}
with $|\alpha|=\alpha_1+\dots+\alpha_m$\,.\,\ As was mentioned already, this describes the effect on the Hodge
filtration when $\cM$ arises as the underlying filtered $\cD_Z$-module of a object in $\CMHM(Z)$. In that
situation,
\begin{equation}
\label{i*frh3}
W_k\,i_+\cM \ \cong \ (W_k \, \cM)[\textstyle\frac{\partial\ }{\partial{z_1}},\frac{\partial\ }{\partial{z_2}},\dots, \frac{\partial\ }{\partial{z_m}}]\,,
\end{equation}
describes the weight filtration. Globally the passage from $\cM$ to $i_+\cM$ involves a twist by the top exterior
power of $T_Z^*X$, the dual normal bundle, but that does not affect the descriptions (\rangeref{i*frh2}{i*frh3}) of
the Hodge and weight filtrations.

To describe the effect of a projection $\pi : X \times Z \to Z$ with projective fiber $X$, we consider a filtered
module $\cM \in \Mod(\cD_{X\times Z})_{\text{frh}}$. Recall that if one disregards the filtration,
\begin{equation}
\label{pi+frh1}
\pi_+\cM \ = \ R\pi_*(\cM \otimes \Omega_X^{\,\cdot}[\dim X])\,,
\end{equation}
as an object of the derived category $D^b(\Mod(\cD_{X\times Z})_{\text{rh}})$. Here $(\Omega_X^{\,\cdot}[n])^p=
\Omega_X^{n+p}$, since the the index $p$ occurs as a superscript, in contrast to $(F_\cdot\cM[m])_p = F_{p-m}\,\cM$
in the case of a subscript; this is consistent with the convention of ``lowering the index" $F^p \leftrightarrow
F_{-p}$.  The shift by $n$ has the effect of putting the complex in degrees\,\ $-n,1-n,\dots,-1,0\,$.\,\ The
filtration
\begin{equation}
\label{pi+frh2}
F_p\pi_+\cM \, = \, R\pi_*(F_p\cM \to F_{p+1}\cM \otimes\Omega_X^1 \to \dots \to F_{p+n}\cM\otimes\Omega_X^n)[n]\,,
\end{equation}
with $n=\dim X$, specifies $\pi_+\cM$ as object in $D^b(\Mod(\cD_{X\times Z})_{\text{frh}})$. When $\cM$ underlies
an object in $\CMHM(X\times Z)$,
\begin{equation}
\label{pi+frh3}
\cM \ \ \text{has weight}\ \, k\ \ \ \Longrightarrow \ \ \ \pi_+\cM \ \ \text{has weight}\ \, k\,.
\end{equation}
In this connection one should recall (\ref{weight_of_cohomology}).

To understand the behavior of the Hodge filtration under general direct images one needs to treat also the case of
an open embedding. That is far more delicate and can only be done in the context of mixed Hodge modules, not for
general filtered $\cD$-modules; see the final remarks in section \ref{sec:flag variety} below.\vspace{2pt}

\section{The Beilinson-Bernstein construction}

We shall apply the theory of complex mixed Hodge modules to $U(\fg)$-modules with real infinitesimal character.
This involves Beilinson-Bernstein's notion of localization of $U(\fg)$-modules on the flag variety $X$, which we
shall review in this section. For the present purposes our ``standing hypothesis" (\ref{realinfchar2}) is
irrelevant, but we shall re-impose it after this section.

Recall that $X$, the variety of Borel subalgebras of $\fg$, is a smooth, complex projective variety, equipped with
a transitive algebraic action of $G$. If $\fb\subset \fg$ is a Borel subalgebra with unipotent radical $\fn$, the
universal Cartan algebra $\fh$ can be uniquely identified with any particular concrete subalgebra of $\fb$ so that
$\fn$ becomes the direct sum of the root spaces corresponding to the negative roots,
\begin{equation}
\label{hbn}
\fb \ = \ \fh \oplus \fn\,,\ \ \ \ \fn\ = \ {\oplus}_{\a\in\Phi^+}\,\fg^{-\a}\ .
\end{equation}
Essentially by definition, any $\l$ in the weight lattice $\,\L\subset \fh_\R^*$ lifts to an algebraic character
$e^\l$ of the Borel subgroup $B\subset G$ with Lie algebra $\fb$. There exists a unique algebraic, $G$-equivariant
line bundle $\cL_\l \to X$ on whose fiber over $\fb$ the isotropy group $B$ acts via $e^\l$, and every
$G$-equivariant algebraic line bundle on $X$ arises in this manner,
\begin{equation}
\label{Llambbda}
\L \ \cong \ \text{group of $G$-equivariant line bundles}\,,\ \ \ \l \ \mapsto \ \cL_\l\ .
\end{equation}
The line bundle $\cL_\l$ is ample in the sense of algebraic geometry precisely when $\l$ is dominant and regular,
i.e., when $(\a,\l)>0$ for all $\a\in\Phi^+$.

It will be convenient to assume temporarily that the algebraic group $G$ is simply connected; this hypothesis will
be relevant only in the next two paragraphs. One defines
\begin{equation}
\label{rhodef}
\rho\ \ = \ \ {\textstyle\frac 12}\ {\sum}_{\a\in\Phi^+}\ \a\ .
\end{equation}
Then $2\rho$ is a sum of roots, hence an element of the weight lattice. The fiber at $\fb$ of the tangent bundle of
$X$ is naturally isomorphic to $\fg/\fb$\,, hence
\begin{equation}
\label{ltworho}
\cL_{-2\rho}\ \ \cong \ \ \wedge^nT^*X\ \ = \ \ \text{canonical bundle on $X$}\ \ \ \ \ \text{(\,$n=\dim X$\,)}.
\end{equation}
According to a standard fact on algebraic groups,
\begin{equation}
\label{rhosc}
\text{$G$\ \ simply connected} \ \ \ \Longrightarrow \ \ \ \rho \in\Lambda\,,
\end{equation}
in which case the canonical bundle $\cL_{-2\rho}$ has a well defined square root $\cL_{-\rho}$.

We define $\cD_\l = \O(\cL_{\l-\rho})\otimes_{\O_X}\!\cD_X\otimes_{\O_X}\!\O(\cL_{\rho-\l})$. It is a
$G$-equivariant sheaf of associative algebras over $\O_X$, locally isomorphic to $\cD_X$, the sheaf of linear
differential operators on $X$ with algebraic coefficients. By definition, it acts on sections of $\cL_{\l-\rho}$\,.
The Lie algebra $\fg$ also acts, by infinitesimal translation, on sections of $\cL_{\l-\rho}$\,. Thus $\fg$ embeds
into the space of global sections of $\cD_\l$\,, $\fg \, \hookrightarrow\,\G\, \cD_\l\,$, and this inclusion
induces a morphism of associative algebras
\begin{equation}
\label{Llambda1}
U(\fg) \ \ \longrightarrow\ \ \G\, \cD_\l
\end{equation}
which is compatible with the degree filtrations on both sides. One can show easily that the center $Z(\fg)$ of
$U(\fg)$ acts on $\O(\cL_{\l-\rho})$ via the character $\chi_\l$\,, as defined in section \ref{sec:hermitian
forms}. It follows that the homomorphism (\ref{Llambda1}) drops from $U(\fg)$ to the quotient
\begin{equation}
\label{Ulambda}
U_\l(\fg) \ \ =_{\text{def}} \ \ U(\fg)/\Ker\{\chi_\l : Z(\fg) \to Z(\fg)\}\cdot U(\fg)\ ,
\end{equation}
as a homomorphism
\begin{equation}
\label{Llambda2}
U_\l(\fg) \ \ \longrightarrow\ \ \G\, \cD_\l
\end{equation}
which again is compatible with the degree filtrations.

Like any algebraic line bundle, the $G$-equivariant line bundle $\O(\cL_{\l-\rho})$ has local trivializations. Over
the intersection of any two trivializing neighborhoods, these are related by a nowhere vanishing, algebraic
transition function. Any local trivialization of $\O(\cL_{\l-\rho})$ simultaneously trivializes the sheaf of
algebras $\cD_\l$, and local trivializations of the latter are related via conjugation by the transition functions
for $\O(\cL_{\l-\rho})$. Conjugation of a differential operator by a nowhere vanishing function $f$ involves taking
logarithmic derivatives of $f$. Thus conjugation of differential operators not only by $f$ has meaning, but even
conjugation by any complex power $f^a$. Since $\fh^* = \C\otimes_\Z\L$, one can use this reasoning to show that
\begin{equation}
\label{Dlambda}
\cD_\l\ \ \text{is well defined for any $\,\l\in \fh^*$\,},
\end{equation}
as infinitesimally $\fg$-equivariant, filtered, associative sheaf of algebras over $\O_X$\,,\,\ a so-called
``twisted sheaf of differential operators". Unless $\l\in\L$, $\cD_\l$ cannot be realized as the sheaf of linear
differential operators acting on sections of a globally defined line bundle, but (\ref{Llambda2}) remains valid in
this more general context. Now that $\cD_\l$ has been defined for any $\l\in\fh^*$, the assumption of the simple
connectivity of $G$ has served its purpose and can be dropped.

\begin{thm}[Beilinson-Bernstein \cite{bb:1981}]\label{thm:Ulambda} For any $\l\in\fh^*$, the morphism {\rm(\ref{Llambda2})} is an
isomorphism of filtered algebras; thus $H^0(X,\cD_\l)=U_\l(\fg)$. Also, $H^p(X,\cD_\l)=0$ for all $p>0$ and any
$\l\in\fh^*$.
\end{thm}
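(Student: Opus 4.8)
The plan is to pass to the associated graded with respect to the order filtration, reducing the statement to commutative algebraic geometry on the cotangent bundle $T^*X$, with Kostant's structure theory for $U(\fg)$ as the one substantive algebraic input. Consider the order filtration $0=F_{-1}\cD_\l\subset F_0\cD_\l=\O_X\subset F_1\cD_\l\subset\cdots$ of $\cD_\l$. Since $\cD_\l$ is locally isomorphic to $\cD_X$ and its gluing automorphisms are inner, hence act trivially on symbols, there is a canonical isomorphism $\gr_\bullet\cD_\l\cong\gr_\bullet\cD_X\cong\operatorname{Sym}_{\O_X}T_X=\pi_*\O_{T^*X}$, and it is independent of $\l$; here $\pi\colon T^*X\to X$ denotes the bundle projection. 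Moreover, for $\ze\in\fg$ the principal symbol of the first order operator image of $\ze$ under (\ref{Llambda1}) is the function $\mu^*\ze$ on $T^*X$, i.e. the $\ze$-component of the moment map $\mu\colon T^*X\to\fg^*$, again independently of $\l$. Thus on associated graded the homomorphism $U(\fg)\to\Gamma\,\cD_\l$ becomes the comorphism $\mu^*\colon\C[\fg^*]=\gr_\bullet U(\fg)\to\Gamma(T^*X,\O_{T^*X})$ of $\mu$, uniformly in $\l$.

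The geometric heart of the matter is the Springer resolution. The image of $\mu$ is the nilpotent cone $\mathcal N\subset\fg^*\cong\fg$, and $\mu$ corestricts to a resolution of singularities $T^*X\to\mathcal N$. I would invoke the standard facts that $\mathcal N$ is a normal affine variety and that $R\mu_*\O_{T^*X}=\O_{\mathcal N}$: the equality $\mu_*\O_{T^*X}=\O_{\mathcal N}$ comes from normality of $\mathcal N$, and $R^{>0}\mu_*\O_{T^*X}=0$ from Grauert--Riemenschneider, the canonical bundle of $T^*X$ being trivial (it carries the canonical symplectic form). Since $\pi$ is affine, $H^p(X,\gr_\bullet\cD_\l)=H^p(T^*X,\O_{T^*X})=H^p(\mathcal N,\O_{\mathcal N})$; this vanishes for $p>0$ because $\mathcal N$ is affine, and for $p=0$ it equals $\C[\mathcal N]$, with the tautological map $\C[\mathcal N]\to H^0(T^*X,\O_{T^*X})$ an isomorphism. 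As cohomology commutes with the direct sum over the grading, $H^p(X,\gr_k\cD_\l)=0$ for all $p>0$ and all $k$. On the algebraic side, Kostant's theorems give $\gr_\bullet Z(\fg)=\C[\fg^*]^G$, the freeness of $\C[\fg^*]$ over $\C[\fg^*]^G$, and the generation of the ideal $I_{\mathcal N}$ by the positive-degree invariants; together these yield $\gr_\bullet\bigl(\Ker\{\chi_\l\}\cdot U(\fg)\bigr)=I_{\mathcal N}$, hence $\gr_\bullet U_\l(\fg)\cong\C[\fg^*]/I_{\mathcal N}=\C[\mathcal N]$, compatibly with $\mu^*$.

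Now I assemble the pieces along the order filtration. Since the flag variety satisfies $H^{>0}(X,\O_X)=0$ and we have just shown $H^{>0}(X,\gr_k\cD_\l)=0$, the long exact cohomology sequences of $0\to F_{k-1}\cD_\l\to F_k\cD_\l\to\gr_k\cD_\l\to 0$ give, by induction on $k$, that $H^{>0}(X,F_k\cD_\l)=0$ and that $0\to H^0(X,F_{k-1}\cD_\l)\to H^0(X,F_k\cD_\l)\to H^0(X,\gr_k\cD_\l)\to 0$ is exact. Passing to the colimit over $k$, which is legitimate because $X$ is noetherian so that sheaf cohomology commutes with filtered colimits, gives $H^p(X,\cD_\l)=0$ for all $p>0$ and exhibits the induced filtration on $H^0(X,\cD_\l)$ as exhaustive and bounded below with $\gr_\bullet H^0(X,\cD_\l)=\C[\mathcal N]$. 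The filtered algebra map (\ref{Llambda2}) therefore induces on associated graded a map $\C[\mathcal N]\to\C[\mathcal N]$, which by the previous paragraph is the identity (both sides being identified via $\mu^*$); and a filtered homomorphism inducing an isomorphism on $\gr_\bullet$ between exhaustive, bounded-below filtrations is itself a filtered isomorphism. This proves that (\ref{Llambda2}) is an isomorphism of filtered algebras, and in particular $H^0(X,\cD_\l)=U_\l(\fg)$.

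I expect the genuinely substantive inputs to be the geometry of the Springer resolution---the identity $R\mu_*\O_{T^*X}=\O_{\mathcal N}$ and the normality and affineness of $\mathcal N$---together with Kostant's freeness theorem for $\C[\fg^*]$ over its $G$-invariants; everything else is bookkeeping with filtered sheaves on a noetherian scheme.
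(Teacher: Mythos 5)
The paper gives no proof of Theorem~\ref{thm:Ulambda}; it is stated as a black box and attributed to Beilinson and Bernstein's 1981 note, so there is no in-paper argument to compare against. Your proof is the standard one in the literature (it is, in outline, what Beilinson and Bernstein do and what appears, for instance, in Mili\v{c}i\'c's notes, which the paper cites), and I see no genuine gaps: the passage to associated graded, the identification of $\gr_\bullet\cD_\lambda$ with $\pi_*\O_{T^*X}$ independently of $\lambda$, the use of the Springer resolution with normality of the nilpotent cone and Grauert--Riemenschneider to get $R\mu_*\O_{T^*X}=\O_{\mathcal N}$, Kostant's freeness theorem to compute $\gr_\bullet U_\lambda(\fg)=\C[\mathcal N]$ as a quotient of $\C[\fg^*]$, and the inductive d\'evissage along the order filtration together with exactness of filtered colimits on a noetherian scheme are all correctly deployed and in the right order. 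One small redundancy: you invoke $H^{>0}(X,\O_X)=0$ separately, but this is already the $k=0$ case of your vanishing $H^{>0}(X,\gr_k\cD_\lambda)=0$, since $\gr_0\cD_\lambda=\O_X$. The argument is sound as written.
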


This theorem is the starting point of Beilinson-Bernstein localization. We let $\Mod(U_\l(\fg))_{\text{fg}}$ denote
the category of finitely generated $U_\l(\fg)$-modules~-- or equivalently, the category of finitely generated
$U(\fg)$-modules on which $Z(\fg)$ acts via the character $\chi_\l$. Analogously $\Mod(\cD_\l)_{\text{coh}}$ shall
refer to the category of $\cD_\l$-coherent (sheaves of) $\cD_\l$-modules. Since $X$ is projective, it can be
alternatively characterized as the category of finitely generated $\cD_\l$-modules. In view of the theorem,
\begin{equation}
\label{localization1}
\begin{aligned}
\Delta \, : \, \Mod(U_\l(\fg))_{\text{fg}}\, \longrightarrow\, \Mod(\cD_\l)_{\text{coh}}\ ,\ \ \ \ \ \Delta\,M\,=\, M\otimes_{U_\l(\fg)}\cD_\l\,,
\\
\G\, : \, \Mod(\cD_\l)_{\text{coh}}\, \longrightarrow\, \Mod(U_\l(\fg))_{\text{fg}} \ ,\ \ \ \ \G\,\cM\,=\,H^0(X,\cM)\,,
\end{aligned}
\end{equation}
are well defined covariant functors. The former is called ``localization", and the latter is simply the global
section functor, of course.

Recall that $\l\in\fh_\R^*$ is dominant if $2\frac{(\l,\a)}{(\a,\a)}\geq 0$ for all $\a\in\Phi^+$. More generally
one calls $\l\in\fh^*$ dominant if its real part $\Re \l$, relative to the real structure $\fh_\R^*$, is dominant.
There is a related notion,
\begin{equation}
\label{intdom}
\l\in\fh^*\ \ \text{is integrally dominant if}\ \ \textstyle2\frac{(\l,\a)}{(\a,\a)}\notin \Z_{<0}\ \ \text{for any $\,\a\in \Phi^+$},
\end{equation}
which is less restrictive than dominance. As usual, we call $\l\in\fh^*$ regular if $(\l,\a)\neq 0$ for all
$\a\in\Phi$\,.

\begin{thm}[Beilinson-Bernstein \cite{bb:1981}]\label{thm:AB} {\rm A)}\ \ If $\l$ is regular and integrally dominant,
the stalks of any $\cM\in\Mod(\cD_\l)_{\rm{coh}}$ are generated over $\O_X$ by the global sections of
$\cM$.\newline \noindent{\rm B)}\ \ If $\l$ is integrally dominant, $H^p(X,\cM)=0$ for all
$\cM\in\Mod(\cD_\l)_{\rm{coh}}$ and any $p>0$.
\end{thm}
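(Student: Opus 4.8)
The plan is to bootstrap from Theorem~\ref{thm:Ulambda} -- which is the case $\cM=\cD_\l$ -- to an arbitrary coherent $\cD_\l$-module, in three moves: a geometric Serre-type argument giving a \emph{twisted} form of B), a translation-functor argument that removes the twist, and a deduction of A) from B) together with an auxiliary faithfulness statement. Fix once and for all the dominant regular weight $\nu=\sum_{\a\in\Phi^+}\a\in\L$, so that $\cL_\nu$ is ample on $X$. I first claim
\[
(\dagger)\qquad H^p\big(X,\ \cM\otimes_{\cO_X}\cL_{N\nu}\big)=0\quad\text{for all }p>0,\ \text{all }N\gg 0,\ \text{every }\cM\in\Mod(\cD_\l)_{\mathrm{coh}}.
\]
Pick a good filtration $F_\cdot\cM$. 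Since $R\Gamma(X,-)$ commutes with filtered colimits and $H^{>0}(X,F_a\cM\otimes\cL_{N\nu})=0$ for $N\gg0$ by ordinary Serre vanishing ($F_a\cM$ being the $\cO_X$-coherent bottom step), an induction along $0\to F_{k-1}\cM\to F_k\cM\to\gr_kF\cM\to0$ reduces $(\dagger)$ to the vanishing of $H^{>0}(X,\gr_kF\cM\otimes\cL_{N\nu})$ for all $k$, once $N\gg0$. Now $\gr_\cdot F\cM$ is a finitely generated graded module over $\gr_\cdot\cD_\l\cong\pi_*\cO_{T^*X}$, hence equals $\pi_*\mathcal G$ for a coherent sheaf $\mathcal G$ on $T^*X$ ($\pi:T^*X\to X$ the projection), and the projection formula together with the affineness of $\pi$ gives $\bigoplus_k H^p(X,\gr_kF\cM\otimes\cL_{N\nu})=H^p(T^*X,\mathcal G\otimes(\pi^*\cL_\nu)^{\otimes N})$. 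Factoring $\Gamma(T^*X,-)$ through the Springer resolution $\mu:T^*X\to\mathcal N$ onto the (affine) nilpotent cone and using Leray, this equals $H^0(\mathcal N,R^p\mu_*(\mathcal G\otimes(\pi^*\cL_\nu)^{\otimes N}))$; moreover $\pi^*\cL_\nu$ is $\mu$-ample, since its restriction to every fibre of $\mu$ -- a Springer fibre, embedded in $X$ as a closed subvariety on which $\cL_\nu$ is ample -- is ample. Relative Serre vanishing now kills $R^{>0}\mu_*(\mathcal G\otimes(\pi^*\cL_\nu)^{\otimes N})$ for $N\gg0$ (one uniform threshold, since $\mathcal G$ is a single coherent sheaf), proving $(\dagger)$. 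For $\cM=\cD_\l$, $N=0$, $\mathcal G=\cO_{T^*X}$ this is just $R^{>0}\mu_*\cO_{T^*X}=0$, the rational singularities of $\mathcal N$, recovering the cohomology-vanishing half of Theorem~\ref{thm:Ulambda}.

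Removing the twist is the step that uses integral dominance, and I expect it to be the main obstacle. $(\dagger)$ alone does not give B), because the threshold $N\gg0$ depends on the twisted module, so one cannot merely write $\cM=(\cM\otimes\cL_{-N\nu})\otimes\cL_{N\nu}$. Instead I would prove $H^{>0}(X,\cM\otimes\cL_{m\nu})=0$ for every $m\ge0$ by descending induction on $m$, the base case $m\gg0$ being $(\dagger)$. For the step from $m$ to $m-1$, tensor the $G$-equivariant exact sequence $0\to\cK\to\cO_X\otimes_\C\Gamma(X,\cL_\nu)\to\cL_\nu\to0$ -- with $\cK$ locally free, filtered by line bundles $\cL_\mu$, $\mu$ strictly below $\nu$ -- over $\cO_X$ with $\cM\otimes\cL_{(m-1)\nu}$. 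The middle term of the resulting short exact sequence is the translate $(\cM\otimes\cL_{(m-1)\nu})\otimes_\C\Gamma(X,\cL_\nu)$ of a $\cD_{\l+(m-1)\nu}$-module by the finite-dimensional $G$-module $\Gamma(X,\cL_\nu)$, and its sub- and quotient-modules are separated by the infinitesimal characters $\chi_{\l+(m-1)\nu+\mu}$; passing to cohomology and using the action of $Z(\fg)$ to keep these apart -- here one uses that $\l+m\nu$ stays integrally dominant for $m\ge0$, so that translation toward the (at least as regular) weight $\l+m\nu$ is faithful -- one concludes $H^{>0}(X,\cM\otimes\cL_{(m-1)\nu})=0$ from $H^{>0}(X,\cM\otimes\cL_{m\nu})=0$. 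This bookkeeping with $Z(\fg)$ and translation functors -- equivalently, Bernstein's ``key lemma'' -- is the delicate point, and it is exactly where integral dominance is indispensable (and without it B) fails).

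For A), the same cotangent-bundle geometry shows that $\cM\otimes\cL_{N\nu}$ is generated over $\cO_X$ by its global sections for $N\gg0$ (relative global generation over $\mu$, plus affineness of $\mathcal N$). Granting B), one then checks that $\Gamma(X,-)$ is \emph{faithful} on $\Mod(\cD_\l)_{\mathrm{coh}}$ when $\l$ is in addition regular: were $\cM\ne0$ with $\Gamma(X,\cM)=0$, then $\Gamma(X,\cM\otimes\cL_{N\nu})\ne0$ for $N\gg0$ (being globally generated and nonzero), while the translation-functor comparison of $\Gamma(X,\cM\otimes\cL_{N\nu})$ with $\Gamma(X,\cM)$ -- an isomorphism precisely because regularity puts $\l$ and $\l+N\nu$ in the closure of the same open facet -- forces a contradiction. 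Finally B) together with faithfulness gives that the adjunction morphism $\Delta\Gamma\cM\to\cM$ is surjective: applying $\Gamma$, the vanishing of $H^1$ of its kernel (from B)) and the triangle identities show that the cokernel has no global sections, hence vanishes by faithfulness; and one more application of B), to a resolution of $\cM$ by modules of the form $\Delta M$, upgrades $\cD_\l$-generation to generation of the stalks over $\cO_X$.
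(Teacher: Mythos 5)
The paper does not prove this theorem; it is cited from Beilinson--Bernstein \cite{bb:1981}, so there is no proof in the text to compare against. Your architecture is the right one, and it is essentially the standard Beilinson--Bernstein proof: a Serre-vanishing statement for large twists (your $(\dagger)$, which you establish cleanly via the moment map $\mu:T^*X\to\cN$ and relative Serre vanishing over the affine nilpotent cone, matching the way the cotangent geometry is usually exploited), a translation-functor argument to remove the twist, and then a deduction of A) from B) via global generation and the faithfulness of $\Gamma$.

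The genuine gap is in the ``separation'' step of your descending induction, which as written is false. You need, from the short exact sequence
\begin{equation*}
0 \to \cM_{m-1}\otimes\cK \to \cM_{m-1}\otimes_\C F \to \cM_m \to 0,
\end{equation*}
to conclude that the $\chi_{\l+m\nu}$-part of $H^p(X,\cM_{m-1})\otimes F$ is already controlled by $H^p(X,\cM_m)$, so you assert that the infinitesimal characters $\chi_{\l'+\mu}$ of the graded pieces of $\cM_{m-1}\otimes\cK$ are all distinct from $\chi_{\l+m\nu}$. That fails: already for $\fs\fl_2$ with $\l'=0$, $\nu=\a=2\rho$, $F=\Gamma(X,\cL_\nu)$ the adjoint representation, the lowest-weight piece $\cL_{-\a}$ gives $\chi_{\l'-\a}=\chi_{-\a}=\chi_\a=\chi_{\l'+\nu}$, since $s_\a(\a)=-\a$. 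More generally the extremal weight $w_0\nu$ always threatens this collision when $\l'$ is singular, and integral dominance of $\l'+\nu$ does not rescue the naive claim. (It is also consistent with the fact that translation out of a wall produces length-two modules, not a clean isotypic projection onto a single twist.) The genuine content of Bernstein's key lemma is precisely to get around this: one must identify $R\Gamma(\cM\otimes\cL_\nu)$ with the $\chi_{\l+\nu}$-isotypic component of $R\Gamma(\cM)\otimes F$ by a more refined analysis of the $Z(\fg)$-module structure of $\cM\otimes_\C F$ at the sheaf level -- typically an argument comparing the action of $Z(\fg)$ coming from the $\cD$-module structure with the one coming from the $U(\fg)\otimes U(\fg)$ action, together with a careful use of the integral Weyl group -- and this is the place where integral dominance really enters. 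Your part~3 is essentially correct once B) and faithfulness are granted (note that A) follows from surjectivity of $\Delta\Gamma\cM\to\cM$ because $\cD_\l$ itself is generated over $\cO_X$ by $U(\fg)$, so ``$\cD_\l$-generated by global sections'' already implies ``$\cO_X$-generated by global sections''; no resolution is needed). So the proposal is a correct skeleton with the crux identified but not carried out, and with a specific incorrect claim at that crux.
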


From this, Beilinson-Bernstein deduce, by an essentially formal argument:

\begin{cor}[Beilinson-Bernstein \cite{bb:1981}]\label{cor:AB}\ \ If $\l$ is regular and integrally dominant, the functor
$\,\Delta\,$ defines an equivalence of categories $\Mod(U_\l(\fg))_{\rm{fg}}\cong \Mod(\cD_\l)_{\rm{coh}}$, with
inverse $\Gamma$.
\end{cor}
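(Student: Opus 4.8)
Here is the line of argument I would follow; it is the standard "essentially formal" deduction from Theorems \ref{thm:Ulambda} and \ref{thm:AB}. The plan is to exhibit $\Delta$ as a left adjoint of $\Gamma$ and then to check that the unit $\eta_M : M\to\Gamma\Delta M$ and the counit $\varepsilon_\cM : \Delta\Gamma\cM\to\cM$ of that adjunction are isomorphisms for every object. The adjunction $\Hom_{\cD_\l}(\Delta M,\cM)\cong\Hom_{U_\l(\fg)}(M,\Gamma\cM)$ is formal, reflecting that localization $-\otimes_{U_\l(\fg)}\cD_\l$ is left adjoint to the composite of $\Gamma$ with restriction of scalars along $U_\l(\fg)\hookrightarrow\G\cD_\l$ from Theorem \ref{thm:Ulambda}. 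Two preliminary observations will be used repeatedly: by Theorem \ref{thm:AB}B (which needs only integral dominance) the functor $\Gamma$ is exact on $\Mod(\cD_\l)_{\text{coh}}$, since $H^p(X,\cM)=0$ for $p>0$, and it carries coherent $\cD_\l$-modules to finitely generated $U_\l(\fg)$-modules, so the two functors of (\ref{localization1}) genuinely land in the asserted categories; and $\Delta$ is right exact, being given by a tensor product.

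First I would show the unit $\eta_M$ is an isomorphism. For $M=U_\l(\fg)$ one has $\Delta U_\l(\fg)=\cD_\l$, so $\Gamma\Delta U_\l(\fg)=H^0(X,\cD_\l)=U_\l(\fg)$ by Theorem \ref{thm:Ulambda}, and $\eta$ is the identity; hence $\eta_M$ is an isomorphism whenever $M$ is a finitely generated free $U_\l(\fg)$-module. For an arbitrary $M\in\Mod(U_\l(\fg))_{\text{fg}}$ choose a presentation $U_\l(\fg)^{\oplus m}\to U_\l(\fg)^{\oplus n}\to M\to 0$, apply the right exact functor $\Delta$ and then the exact functor $\Gamma$ to obtain a right exact sequence, and compare it with the original presentation via the naturality square of $\eta$; since the two left-hand vertical maps are isomorphisms, $\eta_M$ is the induced map on cokernels and is therefore an isomorphism (or invoke the five lemma after splicing in kernels).

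Next I would treat the counit $\varepsilon_\cM : \Delta\Gamma\cM\to\cM$. By Theorem \ref{thm:AB}A — and this is the one place where regularity of $\l$ enters — the stalks of $\cM$ are generated over $\O_X$ by global sections, which says precisely that $\varepsilon_\cM$ is surjective. Let $\cK=\ker\varepsilon_\cM$; it is $\cD_\l$-coherent because $\Delta\Gamma\cM$ is (here one uses that $\Gamma\cM$ is finitely generated). Applying the exact functor $\Gamma$ to $0\to\cK\to\Delta\Gamma\cM\to\cM\to 0$ gives a short exact sequence $0\to\Gamma\cK\to\Gamma\Delta\Gamma\cM\to\Gamma\cM\to 0$. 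The triangle identity $\Gamma\varepsilon_\cM\circ\eta_{\Gamma\cM}=\operatorname{id}_{\Gamma\cM}$, together with the fact just established that $\eta_{\Gamma\cM}$ is an isomorphism, shows $\Gamma\varepsilon_\cM$ is an isomorphism, whence $\Gamma\cK=0$. Then $\cK$ is a coherent $\cD_\l$-module with no nonzero global sections, so Theorem \ref{thm:AB}A applied to $\cK$ forces $\cK=0$; thus $\varepsilon_\cM$ is an isomorphism. The unit and counit being natural isomorphisms, $\Delta$ and $\Gamma$ are mutually inverse equivalences.

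The genuinely non-formal inputs are Theorems \ref{thm:Ulambda} and \ref{thm:AB}, which I am taking as known; granting these, the step that needs the most care is the bookkeeping around the adjunction — confirming that $\Gamma$ preserves finite generation (so that $\cK$ above is coherent and Theorem \ref{thm:AB}A can be applied to it) and that the triangle identities hold for the chosen normalizations of unit and counit. Everything beyond that is diagram chasing.
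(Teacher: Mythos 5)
Your argument is correct and is exactly the standard Beilinson--Bernstein deduction; the paper does not give a proof of this corollary, merely asserting that it follows from Theorems \ref{thm:Ulambda} and \ref{thm:AB} ``by an essentially formal argument.'' The one point you flag at the end but do not actually establish is the finite generation of $\Gamma\cM$, which your counit argument invokes (through the claim that $\Delta\Gamma\cM$, and hence $\cK=\ker\varepsilon_\cM$, is coherent, so that Theorem \ref{thm:AB}A can be applied to $\cK$). This is easy to close and uses nothing beyond what you already have: by Theorem \ref{thm:AB}A the coherent sheaf $\cM$ is generated by its global sections, so, since $X$ is Noetherian, one may choose a finitely generated submodule $N\subset\Gamma\cM$ with $\Delta N\to\cM$ surjective; applying the exact functor $\Gamma$ (Theorem \ref{thm:AB}B) to $0\to\cK'\to\Delta N\to\cM\to 0$ and using the isomorphism $\eta_N\colon N\overset{\sim}{\to}\Gamma\Delta N$, which you have already established for every finitely generated $N$, exhibits $\Gamma\cM$ as a quotient of $N$ and hence as finitely generated. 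With that lemma inserted, the rest of your unit/counit computation is complete as written, and the triangle identity argument you use to get $\Gamma\cK=0$ (and then $\cK=0$ via Theorem \ref{thm:AB}A) is the right one.
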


In particular, in the case of a regular integrally dominant $\l\in\fh^*$, the irreducible objects in
$\Mod(U_\l(\fg))_{\rm{fg}}$ correspond bijectively to irreducible objects in $\Mod(\cD_\l)_{\rm{coh}}$. That much
is still true in the singular~-- i.e., non-regular~-- but still integrally dominant case, although the general
equivalence of categories fails without the regularity assumption: there may exists $\cM\in
\Mod(\cD_\l)_{\rm{coh}}$ which have no cohomology in any degree. One can then define a quotient category of
$\Mod(\cD_\l)_{\rm{coh}}$ by setting all cohomologically trivial $\cM$ equal to zero; the quotient category is
equivalent to $\Mod(U_\l(\fg))_{\rm{fg}}$. If one is interested primarily in irreducible modules~-- as we are in
the study of irreducible unitary representations of $G_\R$~-- one can deal with the singular case more simply by
realizing any irreducible $M\in\Mod(U_\l(\fg))_{\rm{fg}}$ as the space of global sections of a unique
$\cM\in\Mod(\cD_\l)_{\rm{coh}}$; as was just mentioned, one can do so provided $\l$ is at least integrally
dominant.

We shall now describe a general construction of irreducible objects in the category $\Mod(\cD_\l)_{\rm{coh}}$.
Since we are not assuming that $\l-\rho\in\L$, $\cL_{\l-\rho}$ need not exist as global $G$-equivariant line bundle
on $X$. However, it is not difficult to see that $\cL_{\l-\rho}$ does exist as an infinitesimally $\fg$-equivariant
line bundle on various (Zariski) open subsets $U\subset X$. Its sheaf of sections $\O(\cL_{\l-\rho})$ is then
irreducible as $\cD_\l$-module on $U$. We now suppose that $Q\subset X$ is a smooth subvariety, which is contained
in one of the Zariski open subsets $U$ on which $\cL_{\l-\rho}$ has meaning. The second ingredient of the
construction is ${\,\cS}\to Q$, an irreducible algebraic vector bundle on $Q$ with flat connection. Then
$\O_Q(\cS)$ is an irreducible $\cD_Q$-module, which is moreover locally free over $\O_Q$. Locally $\cD_\l$ is
isomorphic to $\cD_X$, and $\O(\cL_{\l-\rho}\!\mid_Q)$ is locally isomorphic to $\O_X$, so we can take the
$\cD_{\l}$-module direct image of $\O_Q(\cL_{\l-\rho}\!\mid_Q\otimes_\C\cS)$ under the inclusion
\begin{equation}
\label{jQX}
j\ : \ Q \ \ \hookrightarrow \ \ X\ .
\end{equation}
This requires passage to the derived category, and the operation of $\cD$-module direct image does not preserve
coherence in general. However, $\O_Q(\cL_{\l-\rho}\!\mid_Q\otimes_\C\cS)$ is locally free over $\O_Q$, hence
holonomic. The $\cD$-module direct image does preserve holonomicity, so
\begin{equation}
\label{SinX+}
j_+\O_Q(\cL_{\l-\rho}\!\mid_Q\otimes_\C\cS)\, \in \,D^b\!\left(\Mod(\cD_\l)_{\rm{hol}}\right)\,;
\end{equation}
here $D^b\left(\Mod(\cD_\l)_{\rm{hol}}\right)$ refers to the bounded derived category, consisting of complexes with
holonomic cohomology.

We let $\cH^p\!\left(j_+\O_Q(\cL_{\l-\rho}\!\mid_Q\otimes_\C\cS)\right)$ denote the cohomology sheaves of the
direct image. When restricted to $Q$, the higher direct images~-- i.e., those indexed by $p>0$~-- vanish, and the
restriction of $\cH^0\!\left(j_+\O_Q(\cL_{\l-\rho}\!\mid_Q\otimes_\C\cS)\right)$ coincides with
$\O_Q(\cL_{\l-\rho}\!\mid_Q\otimes_\C\cS)$. One calls
\begin{equation}
\label{stdmod1}
\cM(Q,\l,\cS)\ \ =_{\text{def}}\ \ \cH^0\!\left(j_+\O_Q(\cL_{\l-\rho}\!\mid_Q\otimes_\C\cS)\right)\ \in \ \Mod(\cD_\l)_{\rm{hol}}
\end{equation}
the standard sheaf corresponding to the given set of data. A proof of the following statement can be found in
\cite{milicic:notes}, for example.

\begin{prop}\label{prop:uniquesub}
The standard module $\cM(Q,\l,\cS)$ has a unique irreducible submodule $\cI(Q,\l,\cS)$. The quotient
$\cM(Q,\l,\cS)/\cI(Q,\l,\cS)$  has support on the boundary $\,\partial\,Q$; in particular, $\cM(Q,\l,\cS)$ is
irreducible and coincides with $\cI(Q,\l,\cS)$ when $Q\subset X$ is closed. Every irreducible holonomic
$\cD_\l$-module can be realized as $\cI(Q,\l,\cS)$ for some suitable choice of $Q,\,\l,\,\cS$.
\end{prop}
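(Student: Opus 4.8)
The plan is to realize $\cI(Q,\l,\cS)$ as the socle of $\cM(Q,\l,\cS)$ and to deduce everything from two standard facts: Kashiwara's equivalence for closed embeddings, and the absence of boundary-supported submodules in a direct image $\cH^0 j_+$. Since $\cD_\l$ is locally isomorphic to $\cD_X$, the whole $\cD$-module formalism -- holonomicity, the functors $j^+,\,j_+$, Kashiwara's theorem, and the generic structure theory of holonomic modules -- applies verbatim in the twisted setting, and I use it freely. Write $M_0=\O_Q(\cL_{\l-\rho}\!\mid_Q\otimes_\C\cS)$, an irreducible holonomic $\cD_\l\!\mid_Q$-module, and factor $j\colon Q\hookrightarrow X$ as $Q\xrightarrow{c}V\xrightarrow{o}X$, where $V=(X\setminus(\overline Q\setminus Q))\cap U$ is open ($U$ being an open set containing $Q$ on which $\cL_{\l-\rho}$ is defined, and $\overline Q\setminus Q$ being closed in $X$ since $Q$ is locally closed), $c\colon Q\hookrightarrow V$ is the closed embedding, and $o\colon V\hookrightarrow X$ the open embedding. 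Then $Q$ is closed in $V$ and $\partial Q=\overline Q\setminus V$ is disjoint from $V$. As $c_+$ is exact, $N_0:=c_+M_0$ is an irreducible holonomic $\cD_\l\!\mid_V$-module with support $Q$, and $\cM(Q,\l,\cS)=\cH^0 o_+N_0$.

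The crux -- and, I expect, the main obstacle -- is the vanishing: \emph{$\cH^0 o_+N_0$ has no nonzero submodule supported on $X\setminus V$}. I would prove it from the recollement triangle $i_+i^!\to\mathrm{id}\to o_+o^+\to i_+i^![1]$ attached to the closed complement $i\colon X\setminus V\hookrightarrow X$. Applied to $o_+N_0$ and combined with $o^+o_+N_0\cong N_0$, it gives $i_+i^!(o_+N_0)=0$, hence $i^!(o_+N_0)=0$ by the full faithfulness of $i_+$; consequently $R\Hom_{\cD_\l}\!\bigl(i_+L,\,o_+N_0\bigr)=R\Hom_{\cD_\l}\!\bigl(L,\,i^!(o_+N_0)\bigr)=0$ for every $\cD_\l$-module $L$ on $X\setminus V$ -- the possibly singular $X\setminus V$ being handled by d\'evissage along its smooth strata. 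A truncation argument, using that $o_+N_0$ is cohomologically concentrated in non-negative degrees and that $L$ sits in degree $0$, then yields $\Hom_{\cD_\l}(L,\cH^0 o_+N_0)=0$, as claimed.

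Granting the crux, here is how I would finish the first three assertions. Every simple submodule $A\subseteq\cM(Q,\l,\cS)$ satisfies $o^+A=N_0$: since $o^+$ is exact, $o^+A\subseteq o^+\cM(Q,\l,\cS)=N_0$, which is simple, so $o^+A\in\{0,N_0\}$, and $o^+A=0$ would make $A$ supported on $X\setminus V$, forcing $A=0$. Two distinct simple submodules $A_1,A_2$ would be disjoint, so $N_0\oplus N_0=o^+A_1\oplus o^+A_2$ would embed into $o^+\cM(Q,\l,\cS)=N_0$ -- impossible; and $\cM(Q,\l,\cS)$, being of finite length, has at least one simple submodule. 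Hence it has a unique irreducible submodule $\cI(Q,\l,\cS)$, necessarily contained in every nonzero submodule. From $o^+\cI(Q,\l,\cS)=N_0=o^+\cM(Q,\l,\cS)$ and exactness of $o^+$ we get $o^+\bigl(\cM(Q,\l,\cS)/\cI(Q,\l,\cS)\bigr)=0$, so the quotient is supported on $X\setminus V$; being also supported on $\overline Q$, it is supported on $\overline Q\setminus V=\partial Q$. Finally, if $Q$ is closed in $X$ then $V=X$, so $\cM(Q,\l,\cS)=N_0=c_+M_0$ is irreducible by Kashiwara's equivalence and equals $\cI(Q,\l,\cS)$.

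For the last assertion, let $\cL$ be an irreducible holonomic $\cD_\l$-module on $X$, with support $S$. By the generic structure theorem for irreducible holonomic $\cD_\l$-modules there is a dense, smooth, Zariski-open $Q\subseteq S$ -- automatically locally closed in $X$, and, after shrinking, contained in an open set on which $\cL_{\l-\rho}$ is defined -- such that $\cL\!\mid_Q=j^+\cL$ is an irreducible $\O_Q$-coherent $\cD_\l\!\mid_Q$-module, i.e.\ an irreducible flat connection. Let $\cS$ be the (irreducible) flat bundle on $Q$ determined by $\O_Q(\cL_{\l-\rho}\!\mid_Q\otimes_\C\cS)\cong\cL\!\mid_Q$. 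Applying $\cH^0$ to the adjunction unit $\cL\to j_+j^+\cL$ gives a map $\cL\to\cH^0 j_+j^+\cL=\cM(Q,\l,\cS)$ restricting to the identity over $Q$; by simplicity of $\cL$ it is injective, and its image -- a simple submodule -- is the unique irreducible submodule $\cI(Q,\l,\cS)$. Hence $\cL\cong\cI(Q,\l,\cS)$. Full details of the standard inputs (generic structure, Kashiwara's equivalence, and the behaviour of $j^+$ and $j_+$ for twisted $\cD$-modules) can be found in \cite{milicic:notes}.
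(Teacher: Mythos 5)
The paper gives no proof of this proposition; it simply cites Mili\v{c}i\'c's notes, and your argument is essentially the standard one found there. Two small remarks.

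First, the intersection with $U$ in your definition $V=(X\setminus(\overline Q\setminus Q))\cap U$ is superfluous and slightly spoils the support computation at the end: you want $\overline Q\setminus V=\partial Q$, but with your $V$ you get $\overline Q\setminus V=\partial Q\cup(\overline Q\setminus U)$, which is strictly larger when $U\not\supseteq\overline Q$. Just take $V=X\setminus\partial Q$: the open set $U$ is only needed to make sense of the line bundle $\cL_{\l-\rho}\!\mid_Q$ and hence of $M_0$; once $M_0$ is defined, the twisted sheaf $\cD_\l$ is a globally defined TDO on $X$, so $c_+M_0$ makes sense as a $\cD_\l\!\mid_V$-module on this larger $V$, in which $Q$ is still closed. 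This is in fact the factorization the paper itself uses in (\ref{j12QX}).

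Second, the recollement/$i^!$ argument for the crux is more machinery than needed, and one has to be a little careful since $X\setminus V$ is generally singular, so there is no $\cD_{X\setminus V}$ and $i_+i^!$ must be interpreted as local cohomology $R\Gamma_{X\setminus V}$ (or handled by d\'evissage as you indicate). A more elementary observation sidesteps all of this: $\cH^0 o_+N_0$ is, as a sheaf on $X$, the ordinary sheaf direct image $o_*N_0$, and a section of $o_*N_0$ over an open $W\subseteq X$ is by definition a section of $N_0$ over $W\cap V$, so a section vanishing on $V$ is zero. Hence any $\cD_\l$-submodule $A\subseteq\cH^0 o_+N_0$ with $o^+A=0$ is already zero. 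The remainder of your argument -- uniqueness of the socle via exactness of $o^+$, Kashiwara's equivalence when $Q$ is closed, and the generic structure theorem for the converse -- is the expected route and is correct as written.
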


We should remark that the correspondence between irreducible holonomic and standard $\cD_\l$-modules is essentially
bijective: given an irreducible $\cM\in\Mod(\cD_\l)_{\rm{hol}}$\,,\,\ one can choose as $Q$ any (Zariski) open
subset of the regular set of the support of $\cM$ such that the restriction $\cM\!\mid_Q$ is locally free over
$\cD_\l$; choosing $Q$ maximally makes it unique. Combining the proposition with the equivalence of categories
\ref{cor:AB}, one finds:

\begin{cor}\label{cor:uniquesub}
If $\l\in\fh^*$ is integrally dominant, $H^0\left(X,\cI(Q,\l,\cS)\right)$ is an irreducible $U_\l(\fg)$-module or
is zero\begin{footnote}{It can vanish only when $\l$ is singular, of course.}\end{footnote}; if non-zero, it is
also the unique irreducible submodule of the $U_\l(\fg)$-module $H^0\left(X,\cM(Q,\l,\cS)\right)$.
\end{cor}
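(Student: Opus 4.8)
The plan is to deduce both assertions from the adjunction between $\Delta$ and $\Gamma$, using it to transport the submodule lattice of $\Gamma\cN=H^0(X,\cN)$ into that of $\cN$. When $\l$ is regular this is unnecessary: corollary~\ref{cor:AB} then makes $\Gamma$ an equivalence of abelian categories, so it carries the unique irreducible submodule $\cI(Q,\l,\cS)$ of $\cM(Q,\l,\cS)$ furnished by proposition~\ref{prop:uniquesub} to the unique irreducible submodule of $H^0(X,\cM(Q,\l,\cS))$, and in particular $H^0(X,\cI(Q,\l,\cS))$ is then nonzero and irreducible. So the real content is the possibly singular integrally dominant case, in which $\Gamma$ is only exact (part B of theorem~\ref{thm:AB}) and can annihilate nonzero modules.

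Assume now merely that $\l$ is integrally dominant. Recall that $\Delta$ is left adjoint to $\Gamma$, that $\Gamma$ is exact, and that for integrally dominant $\l$ the adjunction unit $M\to\Gamma\Delta M$ is an isomorphism; the last fact is obtained by applying the exact functor $\Gamma$ to a finite $U_\l(\fg)$-presentation of $M$ and invoking theorem~\ref{thm:Ulambda} (namely $H^0(X,\cD_\l)=U_\l(\fg)$ and $H^p(X,\cD_\l)=0$ for $p>0$) together with part B of theorem~\ref{thm:AB}. The step I would isolate is the following lemma: for any $\cN\in\Mod(\cD_\l)_{\rm{coh}}$ and any $U_\l(\fg)$-submodule $N\subseteq H^0(X,\cN)$, the $\cD_\l$-submodule $\cD_\l N\subseteq\cN$ generated by the global sections in $N$ satisfies $H^0(X,\cD_\l N)=N$. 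To prove it, observe that $\cD_\l N$ is the image of the canonical map $\Delta N\to\cN$ obtained by composing $\Delta$ of the inclusion $N\hookrightarrow H^0(X,\cN)$ with the counit $\Delta\Gamma\cN\to\cN$; applying the exact functor $\Gamma$ and using the triangle identity $(\Gamma\epsilon)(\eta\Gamma)=\mathrm{id}$ together with the invertibility of $\eta$, one checks that $\Gamma$ of this map is, up to the unit isomorphism, the inclusion $N\hookrightarrow H^0(X,\cN)$, so its image is exactly $N$. Consequently $N\mapsto\cD_\l N$ is an injective, inclusion-preserving map from the submodules of $H^0(X,\cN)$ into the submodules of $\cN$, admitting $\Gamma$ as a left inverse.

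With the lemma in hand the corollary is formal. Taking $\cN=\cI(Q,\l,\cS)$, which is irreducible, the submodule lattice of $H^0(X,\cI(Q,\l,\cS))$ embeds into the two-element lattice of submodules of $\cI(Q,\l,\cS)$; hence $H^0(X,\cI(Q,\l,\cS))$ has at most two submodules, so it is zero or irreducible, which is the first assertion. (It is nonzero whenever $\l$ is regular, by corollary~\ref{cor:AB}, which accounts for the footnote.) Now suppose $M:=H^0(X,\cI(Q,\l,\cS))\neq 0$ and take $\cN=\cM(Q,\l,\cS)$. Being holonomic, $\cM(Q,\l,\cS)$ has finite length, and by proposition~\ref{prop:uniquesub} its unique irreducible submodule is $\cI(Q,\l,\cS)$; therefore every nonzero $\cD_\l$-submodule of $\cM(Q,\l,\cS)$ contains $\cI(Q,\l,\cS)$. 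Given any nonzero $U_\l(\fg)$-submodule $N\subseteq H^0(X,\cM(Q,\l,\cS))$, the submodule $\cD_\l N$ is nonzero (since $H^0(X,\cD_\l N)=N\neq 0$) and hence contains $\cI(Q,\l,\cS)$, so $N=H^0(X,\cD_\l N)\supseteq H^0(X,\cI(Q,\l,\cS))=M$. Thus $M$ is a submodule of $H^0(X,\cM(Q,\l,\cS))$ contained in every nonzero submodule, and being irreducible by the first part, it is the unique irreducible submodule.

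The only genuine work lies in the lemma, and within it in the bookkeeping that $\Gamma$ carries the canonical map $\Delta N\to\cN$ back to the inclusion $N\hookrightarrow H^0(X,\cN)$; after that everything rests formally on exactness of $\Gamma$, the unit isomorphism, and the unique-submodule statement of proposition~\ref{prop:uniquesub} combined with finite length of holonomic modules. An alternative treatment of the singular case would deform $\l$ to a regular integrally dominant parameter and use a translation functor, but the argument above avoids that machinery.
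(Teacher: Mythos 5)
Your proof is correct. The argument is complete and in particular handles the singular case honestly, which is the only place where real content is needed.

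The paper itself is terse here: it says the corollary follows by ``combining the proposition with the equivalence of categories \ref{cor:AB},'' but that equivalence requires $\l$ regular; the singular case is left to the preceding remark that the quotient of $\Mod(\cD_\l)_{\rm coh}$ by cohomologically trivial sheaves is equivalent to $\Mod(U_\l(\fg))_{\rm fg}$, a Beilinson--Bernstein fact that the paper quotes but does not reprove. What you have done instead is extract exactly the piece of that equivalence the corollary actually needs and prove it directly: the lemma that, for $\l$ integrally dominant, $N\mapsto \cD_\l N$ is an injective inclusion-preserving map from $U_\l(\fg)$-submodules of $H^0(X,\cN)$ to $\cD_\l$-submodules of $\cN$ with $\Gamma$ as a left inverse. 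This rests only on exactness of $\Gamma$ (Theorem~\ref{thm:AB}\,B), on $\Gamma\cD_\l=U_\l(\fg)$ with higher vanishing (Theorem~\ref{thm:Ulambda}), and on the adjunction $\Delta\dashv\Gamma$ with its triangle identity; your verification that the unit $M\to\Gamma\Delta M$ is an isomorphism via a finite presentation is the standard one and is sound. From there the two applications (to $\cI(Q,\l,\cS)$ for irreducibility-or-vanishing, and to $\cM(Q,\l,\cS)$ using finite length and Proposition~\ref{prop:uniquesub} for the unique-submodule claim) are purely formal. So: same underlying mechanism as the paper, but your version is more self-contained because it makes explicit the order-embedding of submodule lattices that the quotient-category equivalence encodes, rather than citing that equivalence as a black box. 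The only minor stylistic point is that you apply $\Delta$ to the presentation before $\Gamma$, which is what you mean but is slightly obscured by the phrase ``applying the exact functor $\Gamma$ to a finite presentation.''
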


Holonomicity is not automatic for irreducible coherent $\cD_\l$-modules in general. However, as we shall discuss at
the end of this section, it does become automatic if certain additional structures are imposed~-- for example, in
the Harish Chandra setting. In settings when holonomicity does become automatic, proposition \ref{prop:uniquesub}
and its corollary almost provide a classification of the irreducible $U_\l(\fg)$-modules. What is missing is
information of when an irreducible sheaf fails to have global sections. That missing piece is provided by a result
of Beilinson-Bernstein; see \cite{HMSWII}, for example.

The usual definition of the $\cD$-module direct image, e.g. in \cite{borel:1987}, requires passage from left to
right $\cD$-modules. One can convert left to right $\cD$-modules by twisting with the canonical bundle. If one
wants to stay within the universe of left $\cD$-modules, as we have chosen to do, the definition of the direct
image requires a global twist by the ratio of the canonical bundles of $X$ and $Q$, i.e., a twist by the top
exterior power of the normal bundle $T^*_QX$. The description of the polarization in the next section depends on
an explicit description of $j_+\O_Q(\cL_{\l-\rho}\!\mid_Q\otimes_\C\cS)$, which we shall now give.

The inclusion $Q\subset X$ can be expressed as the composition of a smooth closed embedding and an open embedding:
\begin{equation}
\label{j12QX}
j\, = j_2 \circ j_1\,,\ \ \ \ j_1\,:\, Q \ \hookrightarrow \ X - \partial\,Q\,,\ \ \ \ j_2\,:\, X - \partial\,Q \ \hookrightarrow \ X\,.
\end{equation}
Thus $j_+\O_Q(\cL_{\l-\rho}\!\mid_Q\otimes_\C\cS)=
{j_2}_+\!\left(\,{j_1}_+\O_Q(\cL_{\l-\rho}\!\mid_Q\otimes_\C\cS)\right)$. Let $\,c\,$ denote the codimension of $Q$
in $X$, and
\begin{equation}
\label{DQlambda}
\cD_{Q,\l}\, =_{\text{def}}\, \O_Q(\cL_{\l-\rho}\!\mid_Q\!\otimes_\C \wedge^c\, T^*_QX)\otimes_{\O_Q}\cD_Q\otimes_{\O_Q}\O_Q(\cL_{\rho-\l}\!\mid_Q\!\otimes_\C \wedge^c\, T_QX)\ .
\end{equation}
Then $\O_Q(\cL_{\l-\rho}\!\mid_Q\otimes_\C\cS\otimes_\C \wedge^c\, T^*_QX)$ is tautologically a left
$\cD_{Q,\l}$-module. Less obviously, the sheaf restriction of $\cD_\l$ to $Q$ is a right $\cD_{Q,\l}$-module, as
can be seen by converting from right to left $\cD$-modules and back again, or equivalently by twisting with the
canonical bundle of $X$ and the inverse of the canonical bundle of $Q$. The $\cD$-module direct image under the
closed smooth embedding $j_1$ is given by
\begin{equation}
\label{j1+QX}
{j_1}_+\O_Q(\cL_{\l-\rho}\!\mid_Q\otimes_\C\cS)\ = \ \cD_\l\otimes_{\cD_{Q,\l}}\O_Q(\cL_{\l-\rho}\!\mid_Q\otimes_\C\cS\otimes_\C \wedge^c\, T^*_QX)\,.
\end{equation}
This step does not require going to the derived category, i.e., ${j_1}_+\O_Q(\cL_{\l-\rho}\!\mid_Q\otimes_\C\cS)$
exists as a coherent $\cD_\l$-module. At the next step, the $\cD$-module direct image ${j_2}_+$ under the open
embedding $j_2$ coincides with the derived sheaf direct image $R^{\,\raisebox{.4ex}{\bf .}}{j_2}_*\,$,
\begin{equation}
\label{j2+QX}
\begin{aligned}
{j}_+\O_Q(\cL_{\l-\rho}\!\mid_Q\otimes_\C\cS)\ &= \ {j_2}_+\!\left({j_1}_+\O_Q(\cL_{\l-\rho}\!\mid_Q\otimes_\C\cS)\right)
\\
&= \ R^{\,\raisebox{.4ex}{\bf .}}{j_2}_*\!\left({j_1}_+\O_Q(\cL_{\l-\rho}\!\mid_Q\otimes_\C\cS)\right)\,.
\end{aligned}
\end{equation}
It follows that the standard $\cD_\l$-module $\cM(Q,\l,\cS)$\,,
\begin{equation}
\label{stdmod2}
\cM(Q,\l,\cS) = \cH^0\!\left(j_+\O_Q(\cL_{\l-\rho}\!\mid_Q\otimes_\C\cS)\right) =  R^{\,0}{j_2}_*\!\left({j_1}_+\O_Q(\cL_{\l-\rho}\!\mid_Q\otimes_\C\cS)\right),
\end{equation}
is the ordinary, underived sheaf direct image of the sheaf (\ref{j1+QX}) under the open embedding $j_2$\,. This is
the description we shall need.

As was mentioned in section \ref{sec:hermitian forms}, a Harish Chandra module with infinitesimal character
$\chi_\l$ is a finitely generated $U_\l(\fg)$-module, with a compatible structure of algebraic $K$-module. The
analogous notion on the level of $\cD$-modules is that of a Harish Chandra sheaf: a finitely generated
$\cD_\l$-module on $X$, equipped with the structure of $K$-equivariant quasi-projective $\O_X$-module, such that
the two structures are compatible. In this context, compatibility means that the infinitesimal action of $\fk$ on
the sheaf, by differentiation of the $K$-action, coincides with the action of $\fk$ via its inclusion in $\fg$ and
the homomorphism (\ref{Llambda2}). We let $\HC(\cD_\lambda,K)$ denote the category of Harish Chandra sheaves and
$(\cD_\lambda,K)$-homomorphisms between them\begin{footnote}{If $G_\R$ and hence also $K$ are connected, the
$K$-action on a Harish Chandra sheaf is completely determined by the $\fk$-action, and hence by the $\cD_\l$-module
structure. Any $\cD_\l$-morphism between Harish Chandra sheaves is then a morphism in the category
$\HC(\cD_\lambda,K)$.}\end{footnote}. Feeding the $K$-action into the equivalence of categories \ref{cor:AB}, one
obtains:

\begin{cor}\label{cor:ABHC} \ \ If $\l$ is regular and integrally dominant, the functor
$\,\Delta\,$ defines an equivalence of categories $\HC(\fg,K)_\l\cong \HC(\cD_\l,K)$, with inverse functor
$\Gamma$. If $\l$ is integrally dominant, every irreducible Harish Chandra module can be realized as the space of
global sections $H^0(X,\cM)$ of a unique irreducible Harish Chandra sheaf $\cM$.
\end{cor}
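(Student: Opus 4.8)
The plan is to upgrade the Beilinson--Bernstein equivalence of Corollary~\ref{cor:AB}, together with the singular-case realization of irreducible $U_\l(\fg)$-modules recalled before Corollary~\ref{cor:uniquesub}, to the $K$-equivariant setting, by checking that the constructions involved respect the $K$-action; the regular case then follows formally, whereas the singular case needs one extra descent argument.

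First I would verify that $\Delta$ and $\Gamma$ restrict to functors between $\HC(\fg,K)_\l$ and $\HC(\cD_\l,K)$. The sheaf of algebras $\cD_\l$ is $G$-equivariant, hence $K$-equivariant, with $K$ acting on $\fg\subset\Gamma\cD_\l$ by $\Ad$; so for a Harish Chandra module $M$ the diagonal rule $k\cdot(m\otimes d)=(km)\otimes(k\cdot d)$ is well defined on $\Delta M=M\otimes_{U_\l(\fg)}\cD_\l$ --- because the $K$-action on $M$ intertwines the $U(\fg)$-action with its $\Ad k$-twist, matching the behaviour of $K$ on $\Gamma\cD_\l$ --- and equips $\Delta M$ with a $K$-equivariant quasi-coherent $\O_X$-module structure; differentiating, the induced $\fk$-action agrees with the one coming from $\fk\hookrightarrow\fg\to\Gamma\cD_\l$, so $\Delta M$ is a Harish Chandra sheaf, and $\Delta$ visibly sends $(\fg,K)$-morphisms to $(\cD_\l,K)$-morphisms. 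Conversely, for $\cM\in\HC(\cD_\l,K)$ the space $H^0(X,\cM)$ carries an algebraic $K$-action by functoriality, compatible with the $U_\l(\fg)$-structure precisely by the compatibility built into the definition of a Harish Chandra sheaf, so $\Gamma$ lands in $\HC(\fg,K)_\l$, again functorially. In the regular integrally dominant case the unit $M\to\Gamma\Delta M$, $m\mapsto m\otimes 1$, and the counit $\Delta\Gamma\cM\to\cM$ of the adjunction $\Delta\dashv\Gamma$ are built from canonical $G$-equivariant data (the section $1\in\Gamma\cD_\l$ is $G$-fixed, and evaluation is $G$-equivariant), hence are $K$-equivariant; by Corollary~\ref{cor:AB} they are already isomorphisms of $U_\l(\fg)$-modules, respectively $\cD_\l$-modules, so they are isomorphisms in $\HC(\fg,K)_\l$, respectively $\HC(\cD_\l,K)$, and $\Delta,\Gamma$ are mutually inverse equivalences. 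This proves the first assertion.

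For the second assertion let $\l$ be integrally dominant, possibly singular, and $M\in\HC(\fg,K)_\l$ irreducible. Forgetting $K$, $M$ is an irreducible $U_\l(\fg)$-module, so by the results recalled above (Proposition~\ref{prop:uniquesub}, Corollary~\ref{cor:uniquesub}, and the discussion of the singular case) there is a \emph{unique} irreducible $\cD_\l$-module $\cM$ with $H^0(X,\cM)\cong M$. For each $k\in K$ the pullback $k^*\cM$ is again an irreducible $\cD_\l$-module, via the canonical isomorphism $k^*\cD_\l\cong\cD_\l$, and $H^0(X,k^*\cM)$ is $M$ twisted by $\Ad k$; since $M$ carries a $K$-action this twist is isomorphic to $M$, so uniqueness gives $k^*\cM\cong\cM$. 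As $\cM$ is irreducible, $\Hom_{\cD_\l}(k^*\cM,\cM)$ is one-dimensional, so there is exactly one isomorphism $\phi_k:k^*\cM\xrightarrow{\;\sim\;}\cM$ inducing on $H^0$ the prescribed action of $k$ on $M$; uniqueness then forces the cocycle identity $\phi_{k_1k_2}=\phi_{k_1}\circ k_1^*\phi_{k_2}$, so the $\phi_k$ constitute an algebraic $K$-equivariant structure on $\cM$ restricting on $H^0$ to that of $M$. Hence $\cM$ is an irreducible Harish Chandra sheaf with $H^0(X,\cM)\cong M$ in $\HC(\fg,K)_\l$, and its uniqueness as a Harish Chandra sheaf is immediate from its uniqueness as a $\cD_\l$-module.

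I expect the only genuinely delicate step to be this last construction, where no equivalence of categories is available: the essential input is the rigidity coming from irreducibility of $\cM$ together with the prescribed $K$-action on $M=H^0(X,\cM)$, which simultaneously normalizes each $\phi_k$ and forces the cocycle condition, so that the a priori obstruction to a $K$-equivariant structure vanishes. When $K$ is connected all of this is automatic, as noted in the footnote defining $\HC(\cD_\l,K)$, since then the Harish Chandra structure is carried entirely by the underlying $\cD_\l$-module and is already determined at the Lie algebra level.
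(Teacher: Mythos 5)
Your treatment of the regular case is correct and is exactly the content of the paper's one-line ``feed the $K$-action into corollary \ref{cor:AB}'': one checks that $\Delta$ and $\Gamma$ carry $K$-equivariant structures along, and that the unit and counit, being built from $G$-equivariant data, are $K$-equivariant, so the equivalence descends to $\HC(\fg,K)_\l\cong\HC(\cD_\l,K)$.

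For the singular case you take a genuinely different route from what the paper has in mind, and it contains a gap. You construct, for each $k\in K$ separately, the unique isomorphism $\phi_k\colon k^*\cM\xrightarrow{\sim}\cM$ normalized by the prescribed action of $k$ on $H^0(X,\cM)=M$, and you correctly deduce the cocycle identity from uniqueness. But a $K$-equivariant structure on a quasi-coherent $\O_X$-module is, by definition, an \emph{algebraic} isomorphism $a^*\cM\cong p^*\cM$ over $K\times X$ (with $a$ the action and $p$ the projection), not merely a compatible pointwise family $\{\phi_k\}_{k\in K}$. Nothing in your Schur-lemma normalization shows that $k\mapsto\phi_k$ is regular in $k$; asserting that the $\phi_k$ ``constitute an algebraic $K$-equivariant structure'' is precisely the point that needs proof and is not automatic. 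The cleaner argument, closer to the paper's intent, avoids this entirely: $\Delta M$ carries a $K$-equivariant structure functorially because $M\in\HC(\fg,K)_\l$, the counit $\Delta M\to\cM$ is surjective since its image is a nonzero $\cD_\l$-submodule of the irreducible $\cM$, and its kernel is the unique maximal $\cD_\l$-submodule $\cN\subset\Delta M$ whose global sections meet the image of the unit $M\hookrightarrow\Gamma\Delta M$ trivially; this characterization is canonical and hence $K$-stable, so $\cM=\Delta M/\cN$ inherits a genuine algebraic $K$-equivariant structure by descent, and uniqueness of $\cM$ as a Harish Chandra sheaf follows as you observe from uniqueness as a $\cD_\l$-module. (In the connected case your final remark is accurate and both difficulties evaporate.)
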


By definition, a standard Harish Chandra sheaf is one constructed as in the discussion leading up to (\ref{SinX+}),
but with ingredients $Q$, $\cS$ that make the resulting sheaf $K$-equivariant: we require $Q\subset X$ to be one of
the~-- finitely many~-- $K$-orbits in $X$, and $\cS$ an irreducible $K$-equivariant flat vector bundle over $Q$. In
other words, $\cS\to Q$ is a $K$-homogeneous vector bundle associated to an irreducible representation of the
component group of the isotropy subgroup of $K$ at any particular point of $Q$; the component group is finite
abelian, so $\cS$ necessarily has rank one. Implicit in the construction is the assumption that $\cL_{\l-\rho}$
exists as infinitesimally $\fg$-equivariant line bundle on some neighborhood of $Q$, which turns out to be
equivalent to a partial integrality condition on $\l-\rho$; for details see \cite{HMSWI}. The $K$-equivariant
ingredients $Q$ and $\cS$ make the sheaf $\O_Q(\cL_{\l-\rho}\!\mid_Q\otimes_\C\cS)$ $K$-equivariant, compatibly
with its structure as module over the appropriate $K$-equivariant twisted sheaf of differential operators on $Q$.
The $\cD_\l$-direct image inherits a compatible $K$-equivariant structure by functorality.

Standard Harish Chandra sheaves are all those which arise in this manner, from a $K$-orbit $Q\subset X$ and an
irreducible $K$-equivariant flat vector bundle $\cS$. Coherent sheaves of $\cD_\l$-modules, equivariant with
respect to a subgroup of $G$ which acts on $X$ with finitely many orbits, are automatically regular holonomic. In
particular this applies to Harish Chandra sheaves. Thus, as a special case of proposition \ref{prop:uniquesub}, one
obtains:

\begin{prop}\label{prop:HCclassification} \ \ Irreducible Harish Chandra sheaves correspond bijectively to standard
Harish Chandra sheaves, via inclusion of the former in the latter, as the unique irreducible subsheaf. Standard
sheaves associated to closed $K$-orbits are irreducible.
\end{prop}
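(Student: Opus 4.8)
The plan is to read the statement off from Proposition~\ref{prop:uniquesub}: the only extra input needed is that Harish Chandra sheaves are automatically regular holonomic, so that the classification of irreducible holonomic $\cD_\l$-modules applies, together with the observation that in the $K$-equivariant situation the classifying data $(Q,\cS)$ are themselves $K$-equivariant. First I would recall that any coherent $\cD_\l$-module which is $K$-equivariant is regular holonomic, since $K$ acts on $X$ with finitely many orbits; thus an irreducible Harish Chandra sheaf $\cM$ is in particular an irreducible holonomic $\cD_\l$-module, and it is even irreducible as a $\cD_\l$-module (a nonzero $\cD_\l$-submodule, summed over its $K$-translates, would produce a nonzero proper sub-Harish-Chandra sheaf; see the remark at the end about disconnected $K$). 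Proposition~\ref{prop:uniquesub} then realizes $\cM$ as $\cI(Q,\l,\cS)$, where $Q$ may be chosen to be a maximal Zariski-open subset of the smooth locus of $\operatorname{supp}\cM$ over which $\cM$ is $\O_X$-locally free, and $\cS$ is the resulting irreducible flat connection on $Q$.

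The \emph{key step} is to identify this $Q$ with a $K$-orbit. Because $\operatorname{supp}\cM$ is the support of a $K$-equivariant sheaf it is $K$-stable and closed, and because $\cM$ is $\cD_\l$-irreducible it is irreducible; since there are only finitely many $K$-orbits, $\operatorname{supp}\cM$ is the closure of a single $K$-orbit $Q$. The orbit $Q$ is smooth and open in $\operatorname{supp}\cM$, and $\cM$ is $\O_X$-locally free along $Q$, so the maximal choice above is exactly $Q$. Hence $\cM|_Q$ is an irreducible $K$-equivariant $\cD_Q$-module which is $\O_Q$-locally free, i.e. it has the form $\O_Q(\cL_{\l-\rho}|_Q\otimes_\C\cS)$ with $\cS$ an irreducible $K$-equivariant flat bundle on $Q$; the required partial integrality of $\l-\rho$ near $Q$ is automatic, since $\cL_{\l-\rho}|_Q$ already occurs inside $\cM$, and $\cS$ is forced to have rank one because the component group of the $K$-isotropy at a point of $Q$ is finite abelian. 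By functorality of the $\cD$-module direct image under the $K$-equivariant embedding $j:Q\hookrightarrow X$, the standard sheaf $\cM(Q,\l,\cS)$ of (\ref{stdmod1}) acquires a compatible $K$-equivariant structure and is therefore a standard Harish Chandra sheaf, while $\cM=\cI(Q,\l,\cS)$ is its unique irreducible $\cD_\l$-submodule by Proposition~\ref{prop:uniquesub}; uniqueness forces that submodule to be stable under the $K$-action on $\cM(Q,\l,\cS)$, so the inclusion is a morphism of Harish Chandra sheaves.

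Conversely, for any standard Harish Chandra sheaf $\cM(Q,\l,\cS)$ — with $Q$ a $K$-orbit and $\cS$ irreducible $K$-equivariant flat of rank one — Proposition~\ref{prop:uniquesub} produces a unique irreducible $\cD_\l$-submodule $\cI(Q,\l,\cS)$, which by the same uniqueness argument is $K$-stable and hence an irreducible Harish Chandra sheaf. The two assignments $\cM\mapsto$ (the standard sheaf having $\cM$ as its unique irreducible submodule) and $\cM(Q,\l,\cS)\mapsto\cI(Q,\l,\cS)$ are mutually inverse: by the remark following Proposition~\ref{prop:uniquesub}, the data $(Q,\cS)$ are recovered from $\cI(Q,\l,\cS)$ as the maximal smooth, $\O_X$-locally free locus of its support together with the restriction of the sheaf there, and for a $K$-orbit that locus is $Q$ itself, so $\cM(Q,\l,\cS)$ is recovered as well. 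Finally, if $Q$ is a closed $K$-orbit then $\partial Q=\emptyset$, so Proposition~\ref{prop:uniquesub} gives $\cM(Q,\l,\cS)=\cI(Q,\l,\cS)$, i.e. the standard sheaf attached to a closed $K$-orbit is already irreducible.

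The part needing the most care is the bookkeeping of $K$-equivariant structures, in particular when $K$ is disconnected: one must check that an irreducible Harish Chandra sheaf is genuinely $\cD_\l$-irreducible, that the unique irreducible submodule of a standard sheaf is stable under all of $K$ and not merely under $K^0$, that the equivariant structure on $\cS$ pushes forward correctly under $j$, and that the partial integrality of $\l-\rho$ in a neighbourhood of $Q$ is automatic. Once these equivariance points are settled, the proposition is a direct translation of Proposition~\ref{prop:uniquesub} into the Harish Chandra setting.
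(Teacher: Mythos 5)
Your proposal takes essentially the same route as the paper: the paper notes just before the statement that coherent $\cD_\l$-modules equivariant under a group acting with finitely many orbits are automatically regular holonomic, hence Harish Chandra sheaves are, and then derives the proposition ``as a special case of proposition~\ref{prop:uniquesub}.'' You fill in the details that the paper leaves implicit, and the overall structure (identify the support as the closure of a $K$-orbit, restrict to get the $K$-equivariant flat bundle, recover $\cM(Q,\l,\cS)$ via the $K$-equivariant direct image, and handle the closed-orbit case via $\partial Q=\emptyset$) is the right one.

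One caveat. Your parenthetical argument that an irreducible Harish Chandra sheaf is $\cD_\l$-irreducible — ``a nonzero $\cD_\l$-submodule, summed over its $K$-translates, would produce a nonzero proper sub-Harish-Chandra sheaf'' — does not establish properness: for disconnected $K$ the sum over $K$-translates can exhaust $\cM$ even when the submodule you started from was proper (think of $\cM=\cM_1\oplus\cM_2$ with $\cD_\l$-irreducible summands swapped by $K/K^0$). Indeed for disconnected $K$ an irreducible Harish Chandra sheaf need not be $\cD_\l$-irreducible, and correspondingly the $K$-orbit $Q$ need not be irreducible as a variety. What the $K$-translate/socle argument does give is that $\cM$ is $\cD_\l$-semisimple with irreducible summands permuted transitively by $K/K^0$; the reduction to Proposition~\ref{prop:uniquesub} is then most cleanly finished either by working with $K^0$-orbits and taking $K/K^0$-sums, or by observing that the intermediate extension $\cI(Q,\l,\cS)$ is $K$-equivariant by functoriality and is characterized as the unique subsheaf of $\cM(Q,\l,\cS)$ restricting to the given flat bundle on $Q$ with no nonzero subquotient supported on $\partial Q$ — a characterization internal to the Harish Chandra category. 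You flag the disconnected case as needing care, which is right, but the claim of $\cD_\l$-irreducibility as stated is not correct and the short argument offered for it does not prove it. (The paper's own proof does not address this subtlety either, apart from the footnote concerning connected $K$; the details live in \cite{HMSWI}.)
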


We mentioned earlier that a criterion of Beilinson-Bernstein characterizes those irreducible sheaves which have no
cohomology. In combination with this criterion, corollary \ref{cor:ABHC} and proposition
\ref{prop:HCclassification} provide a classification of the irreducible Harish Chandra modules. For details see
\cite{HMSWII}, which also connections the various different classification schemes and relates algebraic properties
of Harish Chandra sheaves to analytic properties~-- such as square integrability and temperedness~-- of the
corresponding $G_\R$-representations.

The flag variety $X$ is the universal complex projective variety with a transitive algebraic $G$-action~--
universal in the sense that any other such variety arise as a $G$-equivariant image of $G$. These $G$-equivariant
images are called generalized flag varieties. Everything that has been said so far can be adapted to the setting of
generalized flag varieties. However, for a generalized flag variety, the sheaves of twisted differential operators
are defined not for all $\l\in\fh^*$, but only for $\l$ in a suitable subspace of $\fh^*$. For that reason, only
certain Harish Chandra modules can be constructed from Harish Chandra sheaves on any particular generalized flag
variety. Also, $K$-orbits in $X$ affinely embedded, as was observed by Beilinson-Bernstein~-- see \cite{HMSWI}~--
but this is not true in the context of generalized flag varieties. In the general construction of standard modules
(\ref{stdmod1}), the higher direct images $\cH^p\!\left(j_+\O_Q(\cL_{\l-\rho}\!\mid_Q\otimes_\C\cS)\right)$,
$\,p>0$\,, vanish when $Q\subset X$ is affinely embedded. In that case, then,
\begin{equation}
\label{stdmod3}
\cM(Q,\l,\cS)\ = \ j_+\O_Q(\cL_{\l-\rho}\!\mid_Q\otimes_\C\cS)\ \in \ \Mod(\cD_\l)_{\rm{hol}}\ .
\end{equation}
This is a significant property of standard Harish Chandra sheaves on the flag variety, which cannot be extended to
generalized flag varieties.\vspace{2pt}

\section{Mixed Hodge modules on the flag variety}\label{sec:flag variety}

The discussion of the previous section applies to any $\l\in\fh^*$. When we put the structure of complex mixed
Hodge module on standard Harish Chandra sheaves, it becomes necessary to assume $\l\in\fh^*_\R$. The passage from
the Hodge filtration on a standard Harish Chandra sheaf to the Hodge filtration on the corresponding Harish Chandra
module involves a vanishing theorem, which depends on the dominance of $\l$~-- integral dominance is not enough.
For that reason we reimpose the condition (\ref{realinfchar2}), i.e., $\l$ is required both to lie in $\fh_\R^*$
and to be dominant.

Our application of Hodge modules involves mixed Hodge modules built from filtered $\cD_\l$-modules, rather than
from filtered $\cD$-modules as in section \ref{sec:MHM_summary}. Twisting the sheaf $\cD$ by a nowhere vanishing
function does not affect the degree filtration, and locally $\cD_\l \cong \cD$ by means of such a twist. That makes
the passage from $\cD$ to $\cD_\l$ harmless as far as the filtrations are concerned.

The other crucial ingredient of the construction of complex Hodge modules is the polarization. We shall first
describe the polarization in the untwisted case, and then discuss the necessary modification for the twisted case.
We let $\overline X$ denote $X$ equipped with the conjugate algebraic structure. Thus $f \mapsto \bar f$ defines a
conjugate linear isomorphism $\O_X \cong \O_{\overline X}$. Similarly $\cM \ \mapsto \overline{\cM}$ describes a
bijection between regular holonomic $\cD_X$-modules and regular holonomic $\cD_{\overline X}$-modules. Saito
defines the polarization on an irreducible holonomic $\cD$-module $\cM\,$,\,\ when it exists, as an isomorphism
between $\cM$ and its conjugate dual. We shall use a more concrete definition, due to Sabbah \cite{sabbah:2005},
which is based on ideas of Kashiwara \cite{kashiwara:1987}: a nondegenerate hermitian pairing
\begin{equation}
\label{spolarization1}
P\,:\,\cM \times \overline{\cM} \ \longrightarrow \ \cC^{-\infty}(X_\R)\ , \ \ \ \text{bilinear over $\cD_X\times \cD_{\overline X}$}\ ;
\end{equation}
here $X_\R$ denotes $X$ considered as $C^\infty$ manifold, and $\cC^{-\infty}(X_\R)$ the sheaf of distributions on
$X_\R$. The important point is that this notion behaves well with respect to the standard operations on regular
holonomic $\cD$-modules.

In the setting of $\cD_\l$-modules, $\cM \mapsto \overline{\cM}$ defines a bijection
$\Mod(\cD_{X,\l})_{\text{rh}}\cong \Mod(\cD_{\overline X,-\l})_{\text{rh}}$\,;\,\ the switch from $\l$ to $-\l$ is
explained by the fact that the character $e^{-\l}$ is the complex conjugate of $e^\l$. The subscripts $X$ for $\cD_{X,\l}$ and
$\overline X$ for $\cD_{\overline X,-\l}$ were necessary to make the meaning unambiguous. For simplicity, from now
on, we shall return to the notation $\cD_{ \l}$ for $\cD_{X,\l}$ and write $\overline{\cD}_{-\l}$ instead of
$\cD_{\overline X,-\l}$\,. The analogue of (\ref{spolarization1}) in the current situation is a hermitian pairing
\begin{equation}
\label{spolarization2}
P\,:\,\cM \times \overline{\cM} \ \longrightarrow \ \cC^{-\infty}(X_\R)\ , \ \ \ \text{bilinear over $\cD_\l\times \overline{\cD}_{-\l}$}\ ,
\end{equation}
for $\cM\in \Mod(\cD_{\l})_{\text{rh}}$\,.

The definition of the categories $\CHM(Z)$, $\CMHM(Z)$, $D^b(\CMHM(Z))$ in section \ref{sec:MHM_summary} carries
over to the twisted setting. If $Q\subset X$ is a smooth subvariety such that $\cL_{\l-\rho}$ exists as an
infinitesimally $\fg$-equivariant line bundle on some neighborhood of $Q$, it makes sense to consider the analogous
categories $\CHM(Q)_\l$, $\CMHM(Q)_\l$, $D^b(\CMHM(Q)_\l)$ whose objects have underlying filtered regular holonomic
$\cD_{Q,\l}$-modules; cf. (\ref{DQlambda}). If one trivializes $\cL_{\l-\rho}\!\mid_Q$ locally on $Q$, these three
categories become locally isomorphic to their untwisted counterparts.

Going back to the definition (\ref{stdmod1}) of the standard $\cD_\l$-module $\cM(Q,\l,\cS)$, we now require the
irreducible flat vector bundle $\,\cS \to Q$ to come equipped with a flat hermitian metric. In that case,
\begin{equation}
\label{spolarization3}
\begin{aligned}
&P\,:\,\O_Q(\cL_{\l-\rho}\!\mid_Q\otimes_\C\cS) \times \overline{\O_Q(\cL_{\l-\rho}\!\mid_Q\otimes_\C\cS)} \ \longrightarrow \ \cC^{-\infty}(Q_\R)\ ,
\\
&\qquad\qquad P(\sigma,\overline\tau)\ = \ \langle\,\sigma\,,\, \overline\tau\,\rangle\, \in\, \cC^{\infty}(Q_\R)\subset \cC^{-\infty}(Q_\R)\ ,
\end{aligned}
\end{equation}
defines a polarization for the irreducible $\cD_{Q,\l}$-module $\O_Q(\cL_{\l-\rho}\!\mid_Q\otimes_\C\cS)$; here
$\langle\,\cdot\ ,\,\cdot\, \rangle$ denotes the pointwise hermitian form determined by the flat hermitian metric
on $\cS$ and the intrinsic metric\begin{footnote}{The hypothesis $\l\in\fh^*_\R$ implies that
$\overline{e^{\l-\rho}}=e^{\rho-\l}$, so the fiber $\,\overline{\cL_{\l-\rho}}\!\mid_x$\,\ at any $x\in Q$ is
canonically dual to $\,\cL_{\l-\rho}\!\mid_x\,$.}\end{footnote} on $\cL_{\l-\rho}\!\mid_Q$\,. We can then turn
$\O_Q(\cL_{\l-\rho}\!\mid_Q\otimes_\C\cS)$ into an object in $\CHM(Q)_\l$ by imposing the ``trivial" Hodge and
weight filtrations
\begin{equation}
\label{stdchm1}
\begin{aligned}
F_p\,\O_Q(\cL_{\l-\rho}\!\mid_Q\otimes_\C\cS)\ &= \ \begin{cases}\ 0\ ,\ \ &p<0\ ,
\\
\ \O_Q(\cL_{\l-\rho}\!\mid_Q\otimes_\C\cS)\ ,\ \ \ &p\geq 0\ ,\end{cases}
\\
W_k\, \O_Q(\cL_{\l-\rho}\!\mid_Q\otimes_\C\cS) \ &= \ \begin{cases}\ 0\ ,\ \ &k<\dim Q\ ,
\\
\ \O_Q(\cL_{\l-\rho}\!\mid_Q\otimes_\C\cS)\ ,\ \ &k\geq \dim Q\ .\end{cases}
\end{aligned}
\end{equation}
With a slight abuse of notation, we shall write $\O_Q(\cL_{\l-\rho}\!\mid_Q\otimes_\C\cS)\in\CHM(Q)_\l$\,.\,\ Here,
as in the following, it shall be understood that the specified Hodge and weight filtration constitute part of the
structure.

Recall the definition of the morphisms $j_1$ and $j_2$ in (\ref{j12QX}). The direct image of
$\O_Q(\cL_{\l-\rho}\!\mid_Q\otimes_\C\cS)$ under the smooth closed embedding $j_1$ remains irreducible, hence pure,
\begin{equation}
\label{stdchm2}
{j_1}_+\O_Q(\cL_{\l-\rho}\!\mid_Q\otimes_\C\cS)\, \in \, \CHM(X-\partial\,Q)_\l\ ,
\end{equation}
but the open embedding $j_2$ may not preserve purity and requires passage to the derived category, so
\begin{equation}
\label{stdchm3}
{j_2}_+\left({j_1}_+\O_Q(\cL_{\l-\rho}\!\mid_Q\otimes_\C\cS)\right)\, \in \,  D^b(\CMHM(X)_\l)\ .
\end{equation}
The cohomology sheaf in degree zero inherits this structure:
\begin{equation}
\label{stdchm4}
\cM(Q,\l,\cS)\ = \ \cH^{\,0}\!\left({j_2}_+\circ {j_1}_+\O_Q(\cL_{\l-\rho}\!\mid_Q\otimes_\C\cS)\right)\, \in \,\CMHM(X)_\l\ .
\end{equation}
The unique irreducible subsheaf $\cI(Q,\l,\cS)$ of $cM(Q,\l,\cS)$ must coincide with the lowest graded piece of the
weigh filtration, which implies
\begin{equation}
\label{stdchm5}
\cI(Q,\l,\cS)\, \in \,\CHM(X)_\l\ .
\end{equation}
The same functorial arguments apply to Beilinson-Bernstein's maximal extension of
$\O_Q(\cL_{\l-\rho}\!\mid_Q\otimes_\C\cS)$ \cite{bb:1993}. At this point we have put the structure of,
respectively, Hodge module and mixed Hodge module on the irreducible $\cD_\l$-module $\cI(Q,\l,\cS)$, the standard
$\cD_\l$-module $\cM(Q,\l,\cS)$, its dual, and the maximal extension. The natural inclusion among these are
morphism of mixed Hodge modules by construction.

It is entirely possible that the flat vector bundle $\cS$ admits a non-trivial polarized variation of Hodge
structure. If so, we could have carried out the constructions above starting with the resulting Hodge filtration
instead of (\ref{stdchm1}). However, only the trivial structure exists universally and is canonical. As was pointed
out before, $\cS$ has rank one in the $K$-equivariant Harish Chandra setting. In that case, the structure
(\ref{stdchm1}) is truly unique. Recall that the hypothesis (\ref{realinfchar2}) remains in force.

\begin{thm}\label{thm:vanishing}
The global section functor $\,\Gamma : \CMHM(X)_\l \,\to\,U_\l(\fg)\,$ is exact.
\end{thm}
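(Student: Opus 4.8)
The plan is to reduce the statement to the Beilinson--Bernstein vanishing theorem~\ref{thm:AB}(B). The functor $\Gamma=H^0(X,\,\cdot\,)$ is left exact for purely sheaf-theoretic reasons, so the content is right exactness: a surjection $\cM\twoheadrightarrow\cM''$ in $\CMHM(X)_\lambda$ must induce a surjection $\Gamma\cM\twoheadrightarrow\Gamma\cM''$ of $U_\lambda(\fg)$-modules. Put $\cM'=\ker(\cM\to\cM'')$, formed in the abelian category $\CMHM(X)_\lambda$. The first step is to pass to the underlying filtered regular holonomic $\cD_\lambda$-modules: since every morphism in $\CMHM(X)_\lambda$ is strict for both the Hodge and weight filtrations, and the forgetful functor to $\Mod(\cD_\lambda)_{\mathrm{rh}}$ is faithful and exact, the sequence $0\to\cM'\to\cM\to\cM''\to 0$ remains exact as a sequence of $\cD_\lambda$-coherent modules.

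The second step is to take cohomology. The standing hypothesis~(\ref{realinfchar2}) makes $\lambda$ dominant, hence integrally dominant (the inequality $2(\lambda,\alpha)/(\alpha,\alpha)\ge 0$ rules out negative integers), so Theorem~\ref{thm:AB}(B) gives $H^p(X,\cN)=0$ for every $\cN\in\Mod(\cD_\lambda)_{\mathrm{coh}}$ and every $p>0$. Applied to $\cM'$, this collapses the long exact cohomology sequence to $0\to\Gamma\cM'\to\Gamma\cM\to\Gamma\cM''\to 0$, which is the asserted exactness; Theorem~\ref{thm:Ulambda} together with coherence guarantees that $\Gamma$ genuinely takes values in finitely generated $U_\lambda(\fg)$-modules.

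This settles the bare statement, and for it only integral dominance is needed. The role of genuine dominance --- which is why this theorem sits next to the remark that for the Hodge filtration ``integral dominance is not enough" --- appears once one insists that $\Gamma$ respect the Hodge filtration: one wants $F_q(\Gamma\cM):=\Gamma(F_q\cM)$ to be a good filtration of $\Gamma\cM$ with $\gr^F_q(\Gamma\cM)=\Gamma(\gr^F_q\cM)$, and for that one needs $H^p(X,F_q\cM)=0$ and $H^p(X,\gr^F_q\cM)=0$ for all $p>0$ and all $q$. Here Theorem~\ref{thm:AB}(B) does not apply, since $F_q\cM$ is only $\O_X$-coherent and not $\cD_\lambda$-coherent; instead one invokes a Kodaira-type vanishing theorem on the flag variety for coherent sheaves twisted by $\cL_{\lambda-\rho}$, whose proof uses the positivity encoded by dominance, together with the good-filtration property~(\ref{goodfilt1}) which relates $F_q\cM$ for $q\gg 0$ to $\cM$ itself.

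I expect the main obstacle to be exactly this last point: the vanishing $H^{>0}(X,F_q\cM)=0$ for the $\O_X$-coherent Hodge-filtrant sheaves is not formal, and proving it --- uniformly in $q$, and compatibly with passage to $\gr^F$ so that strictness of the filtered sequence follows from exactness on the graded pieces --- is where the real work lies. Everything else (the reduction to $\cD_\lambda$-modules via the exact forgetful functor, and the collapse of the long exact sequence) is routine once Theorem~\ref{thm:AB} is in hand.
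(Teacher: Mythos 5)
The paper does not actually prove this theorem: immediately after the statement the authors write that they ``shall prove this result in a future paper,'' so there is no argument in the text for you to have matched or missed. With that caveat, your reading of the theorem and your reduction are sound. The exactness of $\Gamma$ on the underlying $\cD_\lambda$-modules is, as you say, a direct consequence of Theorem~\ref{thm:AB}(B), using only integral dominance, together with the built-in exactness of the forgetful functor $\CMHM(X)_\lambda\to\Mod(\cD_\lambda)_{\mathrm{rh}}$. And you are right that this cannot be the whole content: the paper draws from the theorem the conclusion that global sections inherit canonical Hodge and weight filtrations, and the sentence just before it insists that genuine dominance --- not merely integral dominance --- is required. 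So ``exact'' here must be meant in the filtered sense, i.e.\ that $\Gamma$ carries strict exact sequences to strict exact sequences, so that $F_q\,\Gamma\cM=\Gamma(F_q\cM)$ behaves functorially.

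That, however, is where your proposal stops being a proof. To get strictness of $0\to\Gamma\cM'\to\Gamma\cM\to\Gamma\cM''\to 0$ on the Hodge filtrants one needs $H^{p}(X,F_q\cM')=0$ for $p>0$, and you correctly observe that Theorem~\ref{thm:AB}(B) says nothing about this since $F_q\cM'$ is only $\O_X$-coherent, not $\cD_\lambda$-coherent. You flag this as the expected hard point and gesture at a Kodaira-type vanishing tied to the positivity built into dominance, but you do not prove it --- and since this vanishing is precisely the nontrivial content of the theorem (and the reason the standing hypothesis~(\ref{realinfchar2}) is re-imposed at the start of section~\ref{sec:flag variety}), your proposal is a correct reduction and an accurate diagnosis of where the difficulty lies, rather than a proof.
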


We shall prove this result in a future paper. As an immediate consequence, the spaces of global sections of
$\cI(Q,\l,\cS)$, of $\cM(Q,\l,\cS)$, of the dual of the latter, and of the maximal extension carry canonical,
functorial weight and Hodge filtrations.

By functorality, the irreducible $\cD_\l$-module $\cI(Q,\l,\cS)$ has a polarization, with a distinguished choice of
sign fixed by the polarization (\ref{spolarization3}) for $\O_Q(\cL_{\l-\rho}\!\mid_Q\otimes\cS)$. Since $U_\R$ is
compact, there exists an essentially unique positive $U_\R$-invariant measure $dm$ on $X$. If $\sigma,\,\tau$ are
global sections of $\cI(Q,\l,\cS)$, we can integrate the distribution $P(\sigma,\tau)$ against the smooth measure
$dm$, which results in a hermitian form on $\,\G\,\cI(Q,\l,\cS)$.

\begin{prop}\label{prop:invariance}
The hermitian form $\,(\sigma,\tau)_{\fu_\R}=\! \displaystyle\int_X \!P(\sigma,\overline\tau)\,dm\,$ on the space\vspace{-4pt} of sections $\,\G\,\cI(Q,\l,\cS)$ is
$\,\fu_R$-invariant.\vspace{-2pt}
\end{prop}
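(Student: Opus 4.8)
The plan is to reduce $\fu_\R$-invariance of the global hermitian form to two facts: that the pairing $P$ is $\cD_\l \times \overline{\cD}_{-\l}$-bilinear, and that the measure $dm$ is $U_\R$-invariant. The group $U_\R$ acts on $X$ and, by infinitesimal translation, on the twisted sheaf $\cD_\l$; the key point is that because $\l \in \fh_\R^*$, the Lie algebra $\fu_\R$ acts on sections of $\cL_{\l-\rho}$ in a way that is compatible, under the conjugation $\cM \mapsto \overline{\cM}$, with the action of $\fu_\R$ on $\overline{\cL_{\l-\rho}}$. Concretely, $\fu_\R$ embeds in $\fg$, hence into $\G\,\cD_\l$ via the homomorphism (\ref{Llambda2}), and simultaneously acts on $\cI(Q,\l,\cS)$. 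So the first step is to make precise that for $\zeta \in \fu_\R$, the operator by which $\zeta$ acts on $\cI(Q,\l,\cS)$ and the operator by which it acts on $\overline{\cI(Q,\l,\cS)}$ are the ``same'' differential operator, transported by the conjugate-linear identification, up to the sign switch $\l \leftrightarrow -\l$ that is already built into (\ref{spolarization2}).

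Next I would carry out the computation. For $\zeta \in \fu_\R$ and sections $\sigma, \tau$ of $\cI(Q,\l,\cS)$, let $L_\zeta$ denote the Lie derivative, i.e. the vector field on $X_\R$ generated by $\zeta$ together with its action on the twisting bundle. Bilinearity of $P$ over $\cD_\l \times \overline{\cD}_{-\l}$ gives the Leibniz-type identity
\begin{equation}
\label{leibnizP}
L_\zeta\, P(\sigma,\overline\tau)\ =\ P(\zeta\sigma,\overline\tau)\ +\ P(\sigma,\overline{\zeta\tau})\,,
\end{equation}
valid as an identity of distributions, where on the left $L_\zeta$ acts on $P(\sigma,\overline\tau)\in\cC^{-\infty}(X_\R)$ as a real vector field (plus zeroth-order term, which cancels between the two twists since $\overline{e^{\l-\rho}}=e^{\rho-\l}$). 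Integrating (\ref{leibnizP}) against $dm$ and using that $dm$ is $U_\R$-invariant — so that $\int_X (L_\zeta f)\,dm = 0$ for any distribution $f$, because the flow of $\zeta$ preserves $dm$ — yields
\begin{equation}
\label{finalinv}
(\zeta\sigma,\tau)_{\fu_\R}\ +\ (\sigma,\zeta\tau)_{\fu_\R}\ =\ 0\,,
\end{equation}
which is exactly $\fu_\R$-invariance. One should also note that $P(\sigma,\overline\tau)$ is a distribution, not a function, so ``integrating $L_\zeta$ of it against $dm$'' must be justified by pairing with the constant function $1$ and moving $L_\zeta$ onto $dm$ by the distributional definition of the Lie derivative; this is where the compactness of $X$ (so that $1$ is a legitimate, compactly supported test function) and smoothness of $dm$ enter.

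The main obstacle, I expect, is the first step: verifying that the $\fu_\R$-action furnished by the embedding $\fg \hookrightarrow \G\,\cD_\l$ really does satisfy the Leibniz identity (\ref{leibnizP}) with respect to Sabbah's pairing $P$. The pairing is defined abstractly by its compatibility with the six operations and its normalization on flat bundles with metric; on the open piece $Q$ it is given explicitly by (\ref{spolarization3}) as the pointwise hermitian metric, where (\ref{leibnizP}) is a direct consequence of flatness of the metric and the definition of the twisted Lie derivative. The content is that this identity propagates through the direct images $j_{1+}$ and $j_{2+}$ used to build $\cI(Q,\l,\cS)$ from $\O_Q(\cL_{\l-\rho}\!\mid_Q\otimes_\C\cS)$ — equivalently, that the $\fu_\R$-equivariant structure on the standard sheaf and on $P$ are compatible by functorality. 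Since the construction of $\cI(Q,\l,\cS)$ as a mixed Hodge module is $K$-equivariant and $U_\R$ meets every component of $K_\R$, one gets a $U_\R$-equivariant, hence infinitesimally $\fu_\R$-equivariant, structure on all sheaves in sight, and the pairing $P$ is part of that equivariant package; pushing (\ref{leibnizP}) from $Q$ to $X$ is then formal. I would isolate this compatibility as the single lemma to prove carefully, and treat the integration argument as routine once it is in hand.
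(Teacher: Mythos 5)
Your outline (use $\cD_\l\times\overline{\cD}_{-\l}$-bilinearity of $P$ plus $U_\R$-invariance of $dm$) is the correct decomposition, and the closing integration-by-parts step is exactly as in the paper. But the step you single out as the ``main obstacle'' --- establishing the Leibniz identity
\[
\zeta\,P(\sigma,\overline\tau)\ =\ P(\zeta\sigma,\overline\tau)\ +\ P(\sigma,\overline{\zeta\tau}),\qquad \zeta\in\fu_\R,
\]
does not require a separate compatibility lemma, nor any propagation of equivariant structure through $j_{1+}$ and $j_{2+}$. It is a purely formal consequence of the bilinearity you already invoke, once you write $\zeta=\xi+\overline\xi$ with $\xi\in\fg$ acting as a holomorphic vector field (through $\cD_\l$) and $\overline\xi\in\overline\fg$ as an antiholomorphic one (through $\overline{\cD}_{-\l}$). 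Since $\overline\fg$ annihilates the $\cD_\l$-module $\cI(Q,\l,\cS)$ and $\fg$ annihilates $\overline{\cI(Q,\l,\cS)}$, one has $\zeta\sigma=\xi\sigma$ and $\overline{\zeta\tau}=\overline\xi\,\overline\tau$, and bilinearity then gives
\[
P(\zeta\sigma,\overline\tau)+P(\sigma,\overline{\zeta\tau})\ =\ P(\xi\sigma,\overline\tau)+P(\sigma,\overline\xi\,\overline\tau)\ =\ (\xi+\overline\xi)\,P(\sigma,\overline\tau)\ =\ \zeta\,P(\sigma,\overline\tau).
\]
This holds on all of $X$ directly from the definition of the pairing, so there is nothing to push from $Q$ to $X$. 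Your proposal would eventually work, since the lemma you defer is true, but the route you sketch for proving it (equivariance of the MHM construction, functoriality of $P$ under the six operations) is much heavier than needed and obscures the mechanism; the holomorphic/antiholomorphic splitting of $\zeta$ is the whole point, and without seeing it one would likely get stuck trying to make the ``propagation'' argument precise.
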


\begin{proof}
We identify $\fu_\R \subset \ \fu_\R \otimes \C = \fg \subset \fg \oplus \overline\fg$ with a Lie algebra of real
vector fields on $X$ by infinitesimal translation, and correspondingly $\fg$ and $\overline\fg$ on the right of
this inclusion\begin{footnote}{Via $\fg \ni \zeta \mapsto (\zeta,\overline\zeta)\in
\fg\oplus\overline\fg$\,.}\end{footnote} with Lie algebras of, respectively, holomorphic and antiholomorphic vector
fields. We can then express any $\zeta\in\fu_\R$ as a sum $\zeta=\xi + \overline{\xi}$, with $\xi\in\fg$. Since
$P(\ ,\ )$ is $\cD_\l\times \overline{\cD}_{-\l}$-bilinear, and since $\overline\fg$ and $\fg$ annihilate,
respectively, $\cI(Q,\l,\cS)$ and $\overline{\cI(Q,\l,\cS)}$,
\begin{equation}
\begin{aligned}
\label{spolarization4}
(\zeta\sigma,\tau)_{\fu_\R} + (\sigma,\zeta\tau)_{\fu_\R} \ &= \ \int_X P(\xi\sigma,\overline\tau)\,dm\ + \ \int_X P(\sigma,\overline\xi\overline\tau)\,dm\,
\\
&= \ \int_X (\xi+\overline\xi)\left(P(\sigma,\overline\tau)\right)\,dm\ = \ \int_X \zeta\!\left(P(\sigma,\overline\tau)\right)\,dm
\\
&= \ \int_X \!\zeta\left(P(\sigma,\overline\tau)\,dm\right)\ \ =\ \ 0\ .
\end{aligned}
\end{equation}
At the final step we have used the $\fu_\R$-invariance of $dm$ and the fact that an infinitesimal translate of a
differential form of top degree is exact.
\end{proof}

In the Harish Chandra setting, the Cartan involution $\theta$ acts on the flag variety. The irreducible Harish
Chandra module $M = \Gamma\,\cI(Q,\l,\cS)$ carries a nontrivial $\fg_\R$-invariant hermition form only when
$\theta$ fixes the geometric data $(Q,\l,\cS)$. In that case $\theta$ acts on the sheaf $\,\cI(Q,\l,\cS)$ and its
space of global sections $M$. It is not difficult to show that action of $\theta$ coincides with the operator $T$
of proposition \ref{prop:urgr}, which relates the $\fg_\R$-invariant hermitian form to the $\fu_\R$-invariant one.

We can now state our conjecture more generally, not only for Harish Chandra modules as in (\ref{conj1}):

\begin{conj}\label{conj2}
Suppose $M = \Gamma\,\cI(Q,\l,\cS)$ is an irreducible $U_\l(\fg)$-module cor\-res\-ponding to the geometric data
$Q,\,\l,\,\cS$, as described earlier in this section. Then $v \in F_p M\cap (F_{p-1} M)^\perp,$ $\,v\neq 0\,$,
implies $\,(-1)^{p-c}(v,v)_{\fu_R}
> 0\,$ for all $p$\,.
\end{conj}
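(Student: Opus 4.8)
The plan is to derive Conjecture~\ref{conj2} from the Hodge--Riemann positivity~(\ref{hodge4}) by transporting it through the three constructions of \S\ref{sec:flag variety} that build $M$ out of the positive data on $Q$: the smooth closed embedding $j_1 : Q \hookrightarrow X - \partial Q$, the open embedding $j_2$ together with the passage to $\mathcal H^0$, and the global sections functor $\Gamma$. The starting observation is that $\cI(Q,\l,\cS)$ is a \emph{polarized} complex Hodge module of weight $\dim Q$, with polarization the one inherited by functorality from the pointwise positive pairing~(\ref{spolarization3}) on $Q$; this also pins down the sign, which by \S\ref{sec:flag variety} is the geometric counterpart of Vogan's normalization. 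On $Q$ the Hodge filtration is trivial and $P(\sigma,\overline\tau) = \langle\sigma,\overline\tau\rangle$ is a strictly positive smooth function; when $Q = X$ — equivalently, $G_\R$ compact — this already gives the conjecture, with $p = c = 0$, and fixes all sign conventions.

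The crux of the sign lies in the closed embedding $j_1$, of codimension $c$. Working in local coordinates $z_1,\dots,z_c,\dots$ with $Q = \{z_1 = \dots = z_c = 0\}$, formula~(\ref{i*frh2}) exhibits $\gr^F_p\,{j_1}_+\O_Q(\cL_{\l-\rho}\!\mid_Q\otimes_\C\cS)$ as the span of the normal monomials $(\partial/\partial z_1)^{\a_1}\cdots(\partial/\partial z_c)^{\a_c}$ with $|\a| = p - c$, tensored with $\O_Q(\cL_{\l-\rho}\!\mid_Q\otimes_\C\cS)$, so that $p - c$ counts the number of normal derivatives applied. The polarization of ${j_1}_+$ is the $Q$-supported distribution obtained by differentiating~(\ref{spolarization3}) in the $z_i$ and $\overline z_i$; when this is paired against a smooth top-form and integrated by parts, the transverse directions are governed by a Gaussian $e^{-\sum_i |z_i|^2 + \cdots}$, whose appearance with exactly that sign is the positivity~(\ref{hodge4}), and each pair $\partial/\partial z_i,\overline{\partial/\partial z_i}$ applied to it contributes a factor $-1$ upon restriction to $Q$, while the tangential part reproduces the positive pairing built from $\langle\,\cdot\,,\cdot\,\rangle$. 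Thus on $\gr^F_p$ the induced form is $(-1)^{p-c}$ times a positive-definite form, which is the Hodge--Riemann relation~(\ref{hodge4}) for the Hodge structure of weight $\dim Q$ carried by ${j_1}_+\O_Q(\cL_{\l-\rho}\!\mid_Q\otimes_\C\cS)$. When $Q \subset X$ is closed, so that $\partial Q = \emptyset$ and $\cI(Q,\l,\cS) = {j_1}_+\O_Q(\cL_{\l-\rho}\!\mid_Q\otimes_\C\cS)$, applying $\Gamma$ — exact by Theorem~\ref{thm:vanishing}, together with the attendant vanishing $H^{>0}(X,F_p\cI(Q,\l,\cS))=0$ for the $\mathcal O$-coherent Hodge pieces — identifies $\gr^F_pM$ with $\Gamma(X,\gr^F_p\cI(Q,\l,\cS))$, on which $(\,\cdot\,,\cdot\,)_{\fu_\R}$ is $(-1)^{p-c}$-definite; since this also identifies $F_pM\cap(F_{p-1}M)^\perp$ with $\gr^F_pM$, the conjecture follows in the closed-orbit case.

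For a general $K$-orbit $Q$ one must extend across $\partial Q$ and then descend. Since $K$-orbits in the flag variety are affinely embedded, $\cM(Q,\l,\cS) = {j_2}_+{j_1}_+\O_Q(\cL_{\l-\rho}\!\mid_Q\otimes_\C\cS)$ (no higher direct images), and $\cI(Q,\l,\cS)$ is its lowest weight-graded piece — the minimal extension of ${j_1}_+\O_Q(\cL_{\l-\rho}\!\mid_Q\otimes_\C\cS)$ across $\partial Q$; both operations preserve the polarization and its sign. It remains to check that $F_\cdot\,\cI(Q,\l,\cS)$ together with $P$ continues to obey the positivity~(\ref{hodge4}) at the points of $\partial Q$ — this is encoded in $\cI(Q,\l,\cS)$ being a polarized Hodge module, but at $\partial Q$ it has to be extracted from Saito's inductive nearby/vanishing-cycle description — and that $\Gamma$ still identifies $\gr^F_pM$ with $\Gamma(X,\gr^F_p\cI(Q,\l,\cS))$, which requires the vanishing $H^{>0}(X, F_p\cI(Q,\l,\cS)) = 0$ for the $\mathcal O$-coherent Hodge pieces; it is precisely here that the full dominance hypothesis~(\ref{realinfchar2}), rather than mere integral dominance, must enter.

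I expect the main obstacle to be the last step — an $L^2$-type theorem matching the global hermitian form $\int_X P(\,\cdot\,,\overline{\,\cdot\,})\,dm$ on $\Gamma\,\cI(Q,\l,\cS)$, graded by the Hodge filtration, with the polarized Hodge form, in the presence of the boundary $\partial Q$ where $\cI(Q,\l,\cS)$ genuinely degenerates and where $\cS$ may carry merely norm-one — not unipotent — local monodromy. This is the mixed-Hodge-module counterpart of the $L^2$ package for polarized variations of Hodge structure developed by Zucker, Cattani--Kaplan--Schmid, and Kashiwara--Kawai, adapted both to the twisted sheaves $\cD_\l$ and to sections, rather than de Rham cohomology, of the Hodge module. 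A secondary, bookkeeping-level difficulty is keeping track of the Tate twists implicit in ${j_1}_+$ and in Saito's weight conventions, with the cases $c = 0$ and $c = 1$ serving as calibration. Once the statement is established for every $\cI(Q,\l,\cS)$, the Harish Chandra case of Conjecture~\ref{conj1} follows with $a = c$, and with it, by the discussion in the introduction, Vogan's signature-character conjecture.
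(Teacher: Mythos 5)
The statement you are proving is labeled a \emph{Conjecture} in the paper, and the authors say explicitly that they ``are currently working towards a proof'' (Introduction) and that they ``have verified the conjecture for irreducible Harish Chandra modules in quite a few cases'' (\S\ref{sec:flag variety}). There is therefore no proof in the paper against which to compare yours, and your proposal does not supply one either: you yourself name the two places where the argument is presently a gap, and those are exactly the places where the content of the conjecture lives.

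Your strategy is reasonable and consistent with the framework the paper sets up: reduce to the pointwise positivity~(\ref{spolarization3}) on $Q$, track the $(-1)^{p-c}$ sign through the closed embedding $j_1$ via~(\ref{i*frh2}) and~(\ref{spolarization5}), pass across $\partial Q$ to the minimal extension $\cI(Q,\l,\cS)$, and then use Theorem~\ref{thm:vanishing} (which is precisely where, as you note, dominance rather than mere integral dominance is needed) to descend to $M = \Gamma\,\cI(Q,\l,\cS)$. But the two steps you flag are genuine gaps, not details. First, ``$\cI(Q,\l,\cS)$ is a polarized complex Hodge module'' does not by itself give the $(-1)^{p-c}$-definiteness of $P$ on the Hodge-graded pieces near $\partial Q$: Saito's polarizability packages Hodge--Riemann positivity in terms of primitive parts of graded pieces of the relative monodromy/weight filtration of nearby cycles, and extracting from that a statement about $\gr^F_\cdot$ of $P$ itself is the hard bridge, not a formal consequence. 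Second, the passage from the distribution-valued polarization $P$ to the signature of the hermitian form $\int_X P(\,\cdot\,,\overline{\,\cdot\,})\,dm$ on global sections, graded by the Hodge filtration, is an $L^2$/degeneration theorem; the paper defers exactly this to a future article on explicit $\cD$-module direct images under open embeddings (end of \S\ref{sec:flag variety}). You have correctly located the obstacles, but those obstacles are why the statement is a conjecture, and nothing in your sketch dissolves them.
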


The integer $c=\codim_XQ$ turns out to be the lowest index of the Hodge filtration, which was denoted by $\,a\,$ in
conjecture \ref{conj1}.

We have verified the conjecture for irreducible Harish Chandra modules in quite a few cases. The simplest example
of an irreducible $\cD_\lambda$-module not in Harish Chandra's category is
\begin{equation}
\label{verma}
V_\l\ = \ \cI(\{\text{pt}\},\l,\C)\ = \ \cM(\{\text{pt}\},\l,\C)\, ,
\end{equation}
the $\cD_\l$-module direct image of the ``flat vector bundle" $\C$ over a point -- in other words, the irreducible
Verma module of lowest weight $\l+\rho$. In that case, the hermitian form $(\ ,\ )_{\fu_\R}$ coincides with the
Shapolvalov form. Its signature character was computed by Wallach \cite{wallach:1984}, whose formula is compatible
with conjecture \ref{conj2} for $V_\l$ but does not imply it. We did verify the conjecture in this particular case.
It seems likely that this fact will be relevant for the proof of the general conjecture.

We should mention that the extension of the polarization (\ref{spolarization3}), via ${j_1}_+$\,,\,\ from
$\O_Q(\cL_{\l-\rho}\!\mid_Q\otimes_\C\cS)$ to ${j_1}_+\left(\O_Q(\cL_{\l-\rho}\!\mid_Q\otimes_\C\cS)\right)$ is
easy to describe. Recall the identity (\ref{j1+QX}). If $\sigma$, $\tau$ are local sections of
$\O_Q(\cL_{\l-\rho}\!\mid_Q\otimes_\C\cS\otimes \wedge^cT^*_QX)$ and $D_1$, $D_2$ local sections of $\cD_\l$, all
with overlapping domains, then $D_1\sigma$, $D_2\tau$ can be regarded as as local sections of
$\cD_\l\otimes_{\cD_{Q,\l}}\O_Q(\cL_{\l-\rho}\!\mid_Q\otimes_\C\cS\otimes
\wedge^cT^*_QX)={j_1}_+\!\left(\O_Q(\cL_{\l-\rho}\!\mid_Q\!\otimes_\C\cS)\right)$, and
\begin{equation}
\label{spolarization5}
\begin{aligned}
P\,:\,{j_1}_+\!\left(\O_Q(\cL_{\l-\rho}\!\mid_Q\!\otimes_\C\cS)\right)\! \times\! \overline{{j_1}_+\!\left(\O_Q(\cL_{\l-\rho}\!\mid_Q\!\otimes_\C\cS)\right)}  \to  \cC^{-\infty}(X_\R- \partial Q_\R)\,,
\\
P(D_1\sigma,\overline{D_2\tau}) = D_1\,\overline{D_2}\,\langle\,\sigma\,,\, \overline{\tau}\,\rangle\, \in\, \cC^{\infty}(Q_\R,\wedge^{2c}T^*_{Q_\R}X_\R)\subset \cC^{-\infty}(X_\R-\partial Q_\R)\,,
\end{aligned}
\end{equation}
describes the polarization for ${j_1}_+\left(\O_Q(\cL_{\l-\rho}\!\mid_Q\otimes_\C\cS)\right)$. Note that in this
identity, $\sigma$ and $\overline\tau$ contract to a section of the top exterior power of the conormal bundle
$T^*_{Q_\R}X_\R$ of the real submanifold $Q_\R \subset X_\R$. Any such section represents a distribution on
$X_\R-\partial Q_\R$ with support in $Q_\R$\,,\,\ since it can be paired against any smooth measure on
$X_\R-\partial Q_\R$, resulting in a form of top degree on $Q_\R$ with distribution coefficients~-- a smooth form
of top degree, in fact~-- which could be integrated over $Q_\R$ if it were compactly supported. As a distribution
on $X_\R-\partial Q_\R$\,,\,\ $\langle\,\sigma\,,\, \overline{\tau}\,\rangle$ can be acted upon by the differential
operators $D_1$, $\overline D_2$.

The direct image of the polarizarion (\ref{spolarization5}) under the open embedding $j_2$ becomes the integrand in
the definition (\ref{prop:invariance}) of the hermitian form $(\ ,\ )_{\fu_\R}$. In a future paper we shall give an
explicit description of the $\cD$-module direct image under an open embedding. That description provides a handle
on the hermitian form $(\ ,\ )_{\fu_\R}$. \vspace{2pt}

\section*{A.\,\ Appendix}\label{sec:appendix}
\renewcommand{\thesection}{\Alph{section}}\setcounter{section}{1}\setcounter{equation}{0}

We shall briefly comment on the extension of Saito's theory we need for our applications. In one way, the
development of the theory of mixed Hodge modules is formally analogous to that of the theory of regular holonomic
$\cD$-modules, though the former is far more intricate, of course: in both cases the existence of a category
satisfying certain formal properties is reduced to the case of a base space of dimension one. For the one
dimensional case, Saito relies on the nilpotent orbit theorem and $SL_2$-orbit theorem of \cite{schmid:1973},
though in later pages he refers only to the multi-variable generalization \cite{CKS:1986}. The extension of Saito's
theory depends on a relaxation of the hypotheses of \cites{CKS:1986,schmid:1973}.

The two papers describe, in very precise terms, the degeration of a polarized variation of Hodge structure~-- over
a one dinensional base in \cite{schmid:1973}, and over the complement of a divisor with normal crossings, near a
point of the divisor, in \cite{CKS:1986}. In both cases the existence of an underlying rational structure implies
that the monodromy eigenvalues are roots of unity. After \cite{schmid:1973} was written, Deligne observed that the
arguments go through as long as the eigenvalues have absolute value one. The same is true for the results of
\cite{CKS:1986}, which are based on those of \cite{schmid:1973}. In the terminology of section
\ref{sec:MHM_summary}, this means that \cites{CKS:1986,schmid:1973}, with minor modifications, describe also the
degeneration of complex polarized variations of Hodge structure; the polarization forces the eigenvalue one
property.

Taking the complex conjugate of a complex mixed Hodge structure with its complex conjugate results in a mixed Hodge
structure with underlying real structure~-- rather than an underlying rational structure, as is commonly assumed.
This resolves the seeming contradiction commented upon in section \ref{sec:MHM_summary}, following
\ref{singleton*}. The only critical use of the underlying rational structure in Saito's theory involves the orbit
theorems; for other purposes an underlying real structure suffices. In this way one can avoid relying on an
underlying rational structure, and via the trick of taking the direct sum with the complex conjugate, even on a
real structure. \vspace{2pt}

\begin{bibsection}

\begin{biblist}

\bib{barbasch2009}{article}{
    author={Barbasch, Dan},
     title={Unitary spherical spectrum for split classical groups},
      note={preprint},
}

\bib{borel:1987}{book}{
    author={{Borel et al.}, Armand},
     title={Algebraic D-modules},
    series={Perspectives in Mathematics, Vol. II},
 publisher={Academic Press},
     place={Orlando, Florida},
      date={1987},
}

\bib{bb:1981}{article}{
    author={Beilinson, Alexander},
    author={Bernstein, Joseph},
     title={Localisation de $\frak g$-modules},
   journal={C.~R. Acad. Sci. Paris, Ser. I},
    volume={292},
      date={1981},
     pages={15--18},
}

\bib{bb:1993}{article}{
    author={Beilinson, Alexander},
    author={Bernstein, Joseph},
     title={A proof of the Jantzen conjectures},
   journal={Advances in Soviet Math.},
 publisher={American Math. Soc.},
    volume={16},
      date={1993},
     pages={1--50},
}

\bib{CKS:1986}{article}{
    author={Cattani, Eduardo},
    author={Kaplan, Aroldo},
    author={Schmid, Wilfried},
     title={Degeneration of Hodge structures},
   journal={Annals of Math.},
    volume={123},
      date={1986},
     pages={457--535},
}

\bib{HC1}{article}{
    author={Harish Chandra},
     title={Representations of a semisimple Lie group on a Banach space I},
   journal={Transactions of the AMS},
 publisher={American Math. Soc.},
    volume={75},
      date={1953},
     pages={185--243},
}

\bib{HMSWI}{article}{
    author={Hecht, Henryk},
    author={Mili\v{c}i\'c, Dragan},
    author={Schmid, Wilfried},
    author={Wolf, Joseph},
     title={Localization and standard modules for real semisimple Lie groups I\,: the duality theorem},
   journal={Inventiones Math.},
    volume={90},
      date={1987},
     pages={297--332},
}

\bib{HMSWII}{article}{
    author={Hecht, Henryk},
    author={Mili\v{c}i\'c, Dragan},
    author={Schmid, Wilfried},
    author={Wolf, Joseph},
     title={Localization and standard modules for real semisimple Lie groups II\,: irreducibility, vanishing theorems, and classification},
      note={preprint, preliminary version, http://www.math.harvard.edu/\raisebox{-1ex}{\~{}}schmid/articles/hmsw2.dvi},
}

\bib{helgason1962}{book}{
    author={Helgason, Sigurdur},
     title={Differential geometry and symmetric spaces},
    series={Pure and Applied Mathematics, Vol. XII},
 publisher={Academic Press},
     place={New York-London},
      date={1986},
}

\bib{kashiwara:1987}{article}{
    author={Kashiwara, Masaki},
     title={Regular holonomic $\cD$-modules and distributions on complex manifolds, {\rm in: Complex analytic singularities},},
   journal={Advanceed Studies in Pure Math.},
 publisher={North-Holland},
     place={Amsterdam}
    volume={8},
      date={1987},
     pages={199--208},
}

\bib{knapp1986}{book}{
    author={Knapp, Anthony W.},
     title={Representation Theory of Semisimple Groups, An Overview Based on Examples},
    series={Princeton Landmarks in Mathematics},
      note={Reprint of the 1986 original},
 publisher={Princeton University Press},
     place={Princeton, NJ},
      date={2001},
}

\bib{KZ1982}{article}{
    author={Knapp, Anthony W.},
    author={Zuckerman, Gregg J.},
     title={Classification of irreducible tempered representations of semisimple groups I, II},
   journal={Annals of Math.},
    volume={116},
      date={1982},
     pages={389--501},
}

\bib{langlands1989}{article}{
    author={Langlands, Robert P.},
     title={On the classification of irreducible representations of real algebraic groups},
    series={Math. Surveys No.~31},
 publisher={American Math. Soc.},
     pages={101--170},
      date={1989},
}

\bib{laumon1983}{article}{
    author={Laumon, G\'{e}rard},
     title={Sur la categorie d\'eriv\'e des $\cD$-modules filtr\'es},
    series={Springer Lecture Notes in Mathematics},
    inbook={Algebraic Geometry, Tokyo-Kyoto 1982, Proceedings},
 publisher={Springer},
     pages={151--237},
    volume={1016},
      date={1983},
}

\bib{milicic:notes}{article}{
    author={Mili\v{c}i\'c, Dragan},
     title={Lectures on the algebraic theory of $\cD$-modules},
      date={1999},
       note={http://www.math.utah.edu/\raisebox{-1ex}{\~{}}milicic/Eprints/dmodules.pdf},
}

\bib{sabbah:2005}{article}{
    author={Sabbah, Claude},
     title={Polarizable twistor $\cD$-modules},
   journal={Ast\'erisque},
    volume={300},
      date={2005},
}

\bib{saito:rims1989}{article}{
    author={Saito, Morihiko},
     title={Duality for vanishing cycle functors},
   journal={Publ. Res. Inst. Math. Sci.},
    volume={25},
      date={1989},
     pages={889--921},
}

\bib{saito:compositio1990}{article}{
    author={Saito, Morihiko},
     title={Extension of mixed Hodge modules},
   journal={Compositio. Math.},
    volume={74},
      date={1990},
     pages={209--234},
}

\bib{saito:rims1990}{article}{
    author={Saito, Morihiko},
     title={Mixed Hodge modules},
   journal={Publ. Res. Inst. Math. Sci.},
    volume={26},
      date={1990},
     pages={221--333},
}

\bib{saito:rims1991}{article}{
    author={Saito, Morihiko},
     title={$\cD$-modules on analytic spaces},
   journal={Publ. Res. Inst. Math. Sci.},
    volume={27},
      date={1991},
     pages={291--332},
}

\bib{schmid:1973}{article}{
    author={Schmid, Wilfried},
     title={Variation of Hodge Structure: The Singularities
of the Period Mapping},
   journal={Inventiones Math.},
    volume={22},
      date={1973},
     pages={211--319},
}

\bib{tadic2009}{article}{
   author={Tadi{\'c}, Marko},
   title={$\widehat{\rm GL}(n,\C)$ and $\widehat{\rm GL}(n,\R)$},
   conference={
      title={Automorphic forms and $L$-functions II. Local aspects},
   },
   book={
      series={Contemp. Math.},
      volume={489},
      publisher={Amer. Math. Soc.},
      place={Providence, RI},},
   date={2009},
   pages={285--313},
}

\bib{vogan1979}{article}{
    author={Vogan, David A.},
     title={The algebraic structure of the representations of semisimple Lie groups I},
   journal={Annals of Math.},
    volume={109},
      date={1979},
     pages={1--60},
}

\bib{vogan1981}{book}{
   author={Vogan, David A.},
    title={Representations of real reductive Lie groups},
   series={Progress in Mathematics},
   volume={15}
publisher={Birkh\"auser Boston},
     date={1981},
}

\bib{vogan1986}{article}{
    author={Vogan, David A.},
     title={The unitary dual of $GL(n)$ over an archimedean field},
   journal={Inventiones Math.},
    volume={82},
      date={1986},
     pages={449--505},
}

\bib{vogan1987}{book}{
   author={Vogan, David A.},
    title={Unitary Representations of Reductive Lie Groups},
   series={Annals of Mathematics Studies},
publisher={Princeton University Press},
     date={1987},
}

\bib{vogan2009}{article}{
    author={Vogan, David A.},
     title={Signatures of hermitian forms and unitary representations},
      note={Slides of a talk at the Utah conference on Real Reductive Groups, 2009, http://www.math.utah.edu/realgroups/conference/conference-slides.html},
}

\bib{wallach:1984}{article}{
    author={Wallach, Nolan R.},
     title={On the unitarizability of derived functor modules},
   journal={Inventiones Math.},
    volume={78},
      date={1984},
     pages={131--141},
}

\end{biblist}
\end{bibsection}

\end{document}